\newif\ifpersonal
\newcommand*{\personal}[1]{\textcolor[rgb]{0,0,1}{(Personal: #1)}}
\newcommand*{\todo}[1]{\textcolor{red}{(Todo: #1)}}
\newcommand*{\personal}[1]{\ignorespaces}
\newcommand*{\todo}[1]{\ignorespaces}
\patchcmd{\@setaddresses}{\indent}{\noindent}{}{}
\patchcmd{\@setaddresses}{\indent}{\noindent}{}{}
\patchcmd{\@setaddresses}{\indent}{\noindent}{}{}
\patchcmd{\@setaddresses}{\indent}{\noindent}{}{}
\newcommand*{\boldone}{\text{\usefont{U}{bbold}{m}{n}1}}
\newcommand{\yo}{\text{\usefont{U}{min}{m}{n}\symbol{'110}}}
\DeclareFontFamily{U}{min}{}
\DeclareFontShape{U}{min}{m}{n}{<-> dmjhira}{}
\tikzset{curve/.style={settings={#1},to path={(\tikztostart)
			.. controls ($(\tikztostart)!\pv{pos}!(\tikztotarget)!\pv{height}!270:(\tikztotarget)$)
			and ($(\tikztostart)!1-\pv{pos}!(\tikztotarget)!\pv{height}!270:(\tikztotarget)$)
			.. (\tikztotarget)\tikztonodes}},
	settings/.code={\tikzset{quiver/.cd,#1}
		\def\pv##1{\pgfkeysvalueof{/tikz/quiver/##1}}},
	quiver/.cd,pos/.initial=0.35,height/.initial=0}
\tikzset{tail reversed/.code={\pgfsetarrowsstart{tikzcd to}}}
\tikzset{2tail/.code={\pgfsetarrowsstart{Implies[reversed]}}}
\tikzset{2tail reversed/.code={\pgfsetarrowsstart{Implies}}}
\tikzset{Rightarrow/.style={double equal sign distance,>={Implies},->},
triple/.style={-,preaction={draw,Rightarrow}},
quadruple/.style={preaction={draw,Rightarrow,shorten >=0pt},shorten >=1pt,-,double,double
distance=0.2pt}}
\newcommand{\mylabel}[2]{#2\def\@currentlabel{#2}\label{#1}}
\newcommand{\twoarrows}[3][0.2ex]{%
  \mathrel{\mathpalette\twoarrows@{{#1}{#2}{#3}}}%
}
\newcommand{\twoarrows@}[2]{\twoarrows@@#1#2}
\newcommand{\twoarrows@@}[4]{%
  \vcenter{\offinterlineskip\m@th
    \ialign{\hfil##\hfil\cr
      $#1#3$\cr
      \noalign{\vskip#2}
      $#1#4$\cr
    }%
  }%
}
\newcommand*{\doublerightarrow}[2]{\mathrel{
		\settowidth{\@tempdima}{$\scriptstyle#1$}
		\settowidth{\@tempdimb}{$\scriptstyle#2$}
		\ifdim\@tempdimb>\@tempdima \@tempdima=\@tempdimb\fi
		\mathop{\vcenter{
				\offinterlineskip\ialign{\hbox to\dimexpr\@tempdima+1em{##}\cr
					\rightarrowfill\cr\noalign{\kern.5ex}
					\rightarrowfill\cr}}}\limits^{\!#1}_{\!#2}}}
\newcommand*{\triplerightarrow}[1]{\mathrel{
		\settowidth{\@tempdima}{$\scriptstyle#1$}
		\mathop{\vcenter{
				\offinterlineskip\ialign{\hbox to\dimexpr\@tempdima+1em{##}\cr
					\rightarrowfill\cr\noalign{\kern.5ex}
					\rightarrowfill\cr\noalign{\kern.5ex}
					\rightarrowfill\cr}}}\limits^{\!#1}}}
\def\@tocline#1#2#3#4#5#6#7{\relax
	\ifnum #1>\c@tocdepth 
	\else
	\par \addpenalty\@secpenalty\addvspace{#2}%
	\begingroup \hyphenpenalty\@M
	\@ifempty{#4}{%
		\@tempdima\csname r@tocindent\number#1\endcsname\relax
	}{%
		\@tempdima#4\relax
	}%
	\parindent\z@ \leftskip#3\relax \advance\leftskip\@tempdima\relax
	\rightskip\@pnumwidth plus4em \parfillskip-\@pnumwidth
	#5\leavevmode\hskip-\@tempdima
	\ifcase #1
	\or\or \hskip 1em \or \hskip 2em \else \hskip 3em \fi%
	#6\nobreak\relax
	\dotfill\hbox to\@pnumwidth{\@tocpagenum{#7}}\par
	\nobreak
	\endgroup
	\fi}
\newlist{defenum}{enumerate}{1} 
\setlist[defenum]{label=\arabic*), ref=\thedefn.(\arabic*)}
\newlist{lemenum}{enumerate}{1} 
\setlist[lemenum]{label=\arabic*), ref=\thelemma.(\arabic*)}
\newlist{assumpenum}{enumerate}{1} 
\setlist[assumpenum]{label=\arabic*), ref=\theassumption.(\arabic*)}
\newlist{propenum}{enumerate}{1} 
\setlist[propenum]{label=\arabic*), ref=\theproposition.(\arabic*)}
\newlist{exmpenum}{enumerate}{1} 
\setlist[exmpenum]{label=\arabic*), ref=\theexmp.(\arabic*)}
\newlist{thmenum}{enumerate}{1} 
\setlist[thmenum]{label=\arabic*), ref=\thethm.(\arabic*)}
\newlist{factenum}{enumerate}{1} 
\setlist[factenum]{label=\arabic*), ref=\thefact.(\arabic*)}
\newlist{exercenum}{enumerate}{1} 
\setlist[exercenum]{label=\arabic*), ref=\theexercise.(\arabic*)}
\newlist{remarkenum}{enumerate}{1} 
\setlist[remarkenum]{label=\arabic*), ref=\theremark.(\arabic*)}
\newlist{axiomenum}{enumerate}{1} 
\setlist[axiomenum]{label=\arabic*), ref=\theaxiom.(\arabic*)}
\newlist{questenum}{enumerate}{1} 
\setlist[questenum]{label=\arabic*), ref=\thequestion.(\arabic*)}
\newenvironment{customthm}[1]
{\innercustomthm}
  {\endinnercustomthm}
\newcommand{\adjunction}{\@ifstar\named@adjunction\normal@adjunction}
\newcommand{\normal@adjunction}[4]{%
  #1\colon #2%
  \mathrel{\vcenter{%
    \offinterlineskip\m@th
    \ialign{%
      \hfil$##$\hfil\cr
      \longrightharpoonup\cr
      \noalign{\kern-.3ex}
      \smallbot\cr
      \longleftharpoondown\cr
    }%
  }}%
  #3 \noloc #4%
}
\newcommand{\named@adjunction}[4]{%
  #2%
  \mathrel{\vcenter{%
    \offinterlineskip\m@th
    \ialign{%
      \hfil$##$\hfil\cr
      \scriptstyle#1\cr
      \noalign{\kern.1ex}
      \longrightharpoonup\cr
      \noalign{\kern-.3ex}
      \smallbot\cr
      \longleftharpoondown\cr
      \scriptstyle#4\cr
    }%
  }}%
  #3%
}
\newcommand{\longrightharpoonup}{\relbar\joinrel\rightharpoonup}
\newcommand{\longleftharpoondown}{\leftharpoondown\joinrel\relbar}
\newcommand\noloc{%
  \nobreak
  \mspace{6mu plus 1mu}
  {:}
  \nonscript\mkern-\thinmuskip
  \mathpunct{}
  \mspace{2mu}
}
\newcommand{\smallbot}{%
  \begingroup\setlength\unitlength{.15em}%
  \begin{picture}(1,1)
  \roundcap
  \polyline(0,0)(1,0)
  \polyline(0.5,0)(0.5,1)
  \end{picture}%
  \endgroup
}
\newcommand{\infinity}{\mbox{\footnotesize $\infty$}}
\newcommand{\triv}{\operatorname{triv}}
\newcommand{\oblv}{\operatorname{oblv}}
\newcommand{\Einf}{\mathbb{E}_{{\scriptstyle\infty}}}
\newcommand{\LMod}{{\operatorname{LMod}}}
\newcommand{\LModtwo}{\mathbf{LMod}}
\newcommand{\Modtwo}{\mathbf{Mod}}
\newcommand{\RMod}{{\operatorname{RMod}}}
\newcommand{\op}{\operatorname{op}}
\newcommand{\End}{{\operatorname{End}}}
\newcommand{\Endin}{{\underline{\smash{\End}}}}
\newcommand{{\Cn}}{\operatorname{Cn}}
\newcommand{\Sphere}{\mathbb{S}}
\DeclareMathOperator*{\colim}{colim}
\newcommand{\Mapin}{{\underline{\smash{\Map}}}}
\newcommand{\scrA}{\mathscr{A}}
\newcommand{\scrC}{\mathscr{C}}
\newcommand{\scrD}{\mathscr{D}}
\newcommand{\scrF}{\mathscr{F}}
\newcommand{\scrO}{\mathscr{O}}
\newcommand{\Hom}{{\operatorname{Hom}}}
\newcommand{\hsp}{\hspace{0.05cm}}
\newcommand{\Mod}{{\operatorname{Mod}}}
\newcommand{\lp}{\left(}
\newcommand{\rp}{\right)}
\newcommand{\scrU}{\mathscr{U}}
\newcommand{\scrV}{\mathscr{V}}
\newcommand{\CatUinfty}[1][1]{\operatorname{Cat}_{({\scriptstyle\infty},#1)}}
\newcommand{\CatVinfty}[1][1]{\smash{\widehat{\operatorname{Cat}}_{({\scriptstyle\infty},#1)}}}
\newcommand{\CatVinftytiny}[1][1]{\smash{\widehat{\operatorname{Cat}}}_{({\scriptscriptstyle\infty},#1)}}
\newcommand{\CatVinftycocom}[1][1]{\smash{\widehat{\operatorname{Cat}}}^{\operatorname{rex}}_{({\scriptstyle\infty},#1)}}
\newcommand{\CatVinftycocomtiny}[1][1]{\smash{\widehat{\operatorname{Cat}}}^{\operatorname{rex}}_{({\scriptscriptstyle\infty},#1)}}
\newcommand{\CatVtwoinfty}[1][1]{\smash{\widehat{\mathbf{Cat}}_{({\scriptstyle\infty},#1)}}}
\newcommand{\CatVinftytwo}[1][1]{\widehat{\mathbf{Cat}}_{({\scriptstyle\infty},#1)}}
\newcommand{\CatWinfty}[1][1]{\smash{\widehat{\operatorname{CAT}}_{({\scriptstyle\infty},#1)}}}
\newcommand{\CatWinftycocom}[1][1]{\smash{\widehat{\operatorname{CAT}}}^{\operatorname{rex}}_{({\scriptstyle\infty},#1)}}
\newcommand{\infinitone}{(\infinity,1)}
\newcommand{\PrLU}[1][1]{\operatorname{Pr}^{\operatorname{L}}_{({\scriptstyle\infty},#1)}}
\newcommand{\PrLV}[1][1]{\smash{\widehat{\operatorname{Pr}}}^{\operatorname{L}}_{({\scriptstyle\infty},#1)}}
\newcommand{\PrLUtwo}[1][1]{\mathbf{Pr}^{\operatorname{L}}_{({\scriptstyle\infty},#1)}}
\newcommand{\PrLUtwotiny}[1][1]{\mathbf{Pr}^{\operatorname{L}}_{({\scriptscriptstyle\infty},#1)}}
\newcommand{\PrLk}{\operatorname{Pr}^{\operatorname{L}}_{\kappa}}
\newcommand{\PrRU}[1][1]{\operatorname{Pr}^{\operatorname{R}}_{({\scriptstyle\infty},#1)}}
\newcommand{\LC}{\operatorname{LocSys}}
\newcommand{\LocSysCat}{\operatorname{LocSysCat}}
\newcommand{\LocSysCattwo}{\mathbf{LocSysCat}}
\newcommand{\LCXPrL}{\operatorname{LocSysCat}{\lp X\rp}}
\newcommand{\LinAPrLU}[1][1]{\operatorname{Lin}_{\scrA}\operatorname{Pr}^{\operatorname{L}}_{({\scriptstyle\infty},#1)}}
\newcommand{\LinAPrLUtiny}[1][1]{\mathrm{Lin}_{\scrA}\mathrm{Pr}^{\operatorname{L}}_{({\scriptscriptstyle\infty},#1)}}
\newcommand{\LinAPrLUtwo}[1][1]{\mathbf{Lin}_{\scrA}\mathbf{Pr}^{\operatorname{L}}_{({\scriptstyle\infty},#1)}}
\newcommand{\LinAPrLUtwotiny}[1][1]{\mathbf{Lin}_{\scrA}\mathbf{Pr}^{\operatorname{L}}_{({\scriptscriptstyle\infty},#1)}}
\newcommand{\LinkPrLU}[1][1]{{{\operatorname{Lin}_{\Bbbk}}{\operatorname{Pr}^{\operatorname{L}}_{({\scriptstyle\infty},#1)}}}}\newcommand{\LinkPrLUinv}[1][1]{{{\operatorname{Lin}_{\Bbbk}}{\operatorname{Pr}^{\operatorname{L,inv}}_{({\scriptstyle\infty},#1)}}}}
\newcommand{\LinkPrLUtwo}[1][1]{{\mathbf{Lin}_{\Bbbk}\mathbf{Pr}^{\operatorname{L}}_{({\scriptstyle\infty},#1)}}}
\newcommand{\LCXPrLA}{\operatorname{LocSysCat}{\lp X;\scrA\rp}}
\newcommand{\LCXPrLAtwo}{\mathbf{LocSysCat}{\lp X;\scrA\rp}}
\newcommand*{\longhookrightarrow}{\ensuremath{\lhook\joinrel\relbar\joinrel\rightarrow}}
\def\enddoc@text{}
\newcounter{savedchapter}
\preto\appendix{\setcounter{savedchapter}{\arabic{chapter}}}
\newcommand\resumechapters{
	\setcounter{chapter}{\arabic{savedchapter}}
	\setcounter{section}{0}
	\gdef\@chapapp{\chaptername}
	\gdef\thechapter{\@arabic\c@chapter}
}
\newcommand{\scrS}{\mathscr{S}}
\newcommand{\HH}{{\operatorname{HH}}}
\newcommand{\Ebb}{\mathbb{E}}
\newcommand{\ZZ}{\mathbb{Z}}
\newcommand{\NN}{\mathbb{N}}
\newcommand{\Map}{{\operatorname{Map}}}
\newcommand{\Fun}{{\operatorname{Fun}}}
\newcommand{\Funin}{{\underline{\smash{\Fun}}}}
\newcommand{\LocSys}{{\operatorname{LocSys}}}
\newcommand{\Funtwo}{\mathbf{Fun}}
\newcommand{\FuninL}{\Funin^{\operatorname{L}}}
\newcommand{\scrM}{{\mathscr{M}}}
\newcommand{\CAlg}{{\operatorname{CAlg}}}
\newcommand{\Alg}{{\operatorname{Alg}}}
\numberwithin{equation}{section}
\theoremstyle{plain}
\newtheorem{theorem}[equation]{Theorem}
\newtheorem{lemma}[equation]{Lemma}
\newtheorem{proposition}[equation]{Proposition}
\newtheorem{corollary}[equation]{Corollary}
\newtheorem{conj}[equation]{Conjecture}
\theoremstyle{definition}
\newtheorem{defn}[equation]{Definition}
\newtheorem{parag}[equation]{}
\newtheorem{remark}[equation]{Remark}
\newtheorem{construction}[equation]{Construction}
\newtheorem{warning}[equation]{Warning}
\newtheorem{notation}[equation]{Notation}
\newtheorem{exmp}[equation]{Example}
\newtheorem*{warning*}{Warning}
\newtheorem*{assumption*}{Standing assumption}
\newcommand{\Sp}{\mathscr{S}\mathrm{p}}
\newcommand\restr[2]{{
		\left.\kern-\nulldelimiterspace
		#1
		\vphantom{\|}
		\right|_{#2} 
}}
\providecommand{\abstract}{}
\newcommand{\stackspace}{1.7}
\newcommand{\stack}[2][1cm]{\;\tikz[baseline, yshift=.65ex]%
	{\foreach \k [evaluate=\k as \r using (.5*#2+.5-\k)*\stackspace] in {1,...,#2}{%
			\ifodd\k{\draw[->](0,\r pt)--(#1,\r pt);}%
			\else{\draw[<-](0,\r pt)--(#1,\r pt);}\fi
	}}\;}
    \newcommand{\stackrev}[2][1cm]{\;\tikz[baseline, yshift=.65ex]%
	{\foreach \k [evaluate=\k as \r using (.5*#2+.5-\k)*\stackspace] in {1,...,#2}{%
			\ifodd\k{\draw[<-](#1,\r pt) -- (0,\r pt);}%
			\else{\draw[->](#1,\r pt) -- (0,\r pt);}\fi
	}}\;}
\begin{document}	
	\title{Higher local systems and the categorified monodromy equivalence}
	\author{James Pascaleff}
	\address[James Pascaleff]{University of Illinois\\West Green Street, 1409\\61801, Urbana, IL, United States}\email{\href{mailto:jpascale@illinois.edu}{jpascale@illinois.edu}}
	\author{Emanuele Pavia}
	\address[Emanuele Pavia]{SISSA\\Via Bonomea 265\\34136 Trieste, TS, Italy}	\email{\href{mailto:epavia@sissa.it}{epavia@sissa.it}}
	\author{Nicolò Sibilla}
	\address[Nicolò Sibilla]{SISSA\\Via Bonomea 265\\34136 Trieste, TS, Italy}
	\email{\href{mailto:nsibilla@sissa.it}{nsibilla@sissa.it}}
\maketitle

\begin{abstract}
We study local systems of $(\infinity,n)$-categories on spaces.  
We prove that categorical local systems are captured by (higher) monodromy data: in particular,  if $X$ is $(n+1)$-connected, then 
local systems of $(\infinity,n)$-categories over $X$  
 can be described as 
$\mathbb{E}_{n+1}$-modules over the iterated loop space $\Omega_{n+1}X$. This generalizes the classical  \emph{monodromy equivalence} presenting ordinary local systems as modules over the based loop spaces. Along the way we revisit from the perspective of $\infty$-categories Teleman's influential theory of topological group actions on categories, and we extend it to topological actions on $(\infty,n)$-categories.  Finally, we show that the group of  invertible objects in the category of local systems of $(\infinity,n)$-categories over an $n$-connected space $X$ is isomorphic to the group of characters of $\pi_n(X)$. This should be thought of as a topological analogue of the higher  Brauer group of the space $X$. We conclude the paper with applications of the theory of categorical local systems to the fiberwise Fukaya category of symplectic fibrations.
 \end{abstract}
\tableofcontents
{\section*{Introduction}
\addtocontents{toc}{\protect\setcounter{tocdepth}{0}}
\renewcommand\thesubsection{I.\arabic{subsection}}
\renewcommand\theequation{\thesubsection.\arabic{equation}}
\setcounter{equation}{0}
In this paper we study local systems of higher categories over spaces. Our aim is to generalize and extend to higher categories the following well-known classical picture. Let $X$ be a simply connected space and let $\Bbbk$ be a field. Local systems of $\Bbbk$-vector spaces over $X$ are determined by monodromy data, in the sense that the abelian category of such local systems is equivalent to the category of representations of $\pi_{1}(X)$. Understanding the higher cohomology of local systems requires more information that is not captured by $\pi_1(X)$, and in fact depends on the full homotopy type of $X$. Indeed, the stable category of complexes of sheaves of vector spaces on $X$ whose cohomology sheaves are local systems is equivalent to the stable category of modules over $\mathrm{C}_{\bullet}(\Omega_*X;\Bbbk)$, the algebra of chains on the based loop space of $X$. In formulas, there is an equivalence
\begin{equation}
\label{fundintro}
\LocSys(X;\Bbbk) \simeq \LMod_{\mathrm{C}_{\bullet}(\Omega_*X;\Bbbk)} 
\end{equation} which we shall refer to as the \emph{monodromy equivalence}.

Passing to the $n$-categorical level, local systems of vector spaces are replaced by local systems of $\Bbbk$-linear \emph{presentable} $(\infinity,n)$-categories,  the loop space is replaced by the $(n+1)$-fold iterated loop space $\Omega_{*}^{n+1}X$. This paper seeks to  establish analogues of the monodromy equivalence in the setting of local systems of  \emph{presentable} $(\infinity,n)$-categories. 
Presentable categories have long been familiar to practitioners of $\infinity$-categories, but their $(\infinity,n)$-categorical  analogues have only been recently introduced by Stefanich in \cite{stefanich2020presentable}. If $X$ is an $(n+2)$-connected space, our main result states that there is an equivalence between two $(\infinity,n+1)$-categories: on the one hand, the $(\infinity,n+1)$-category of local systems of presentable $(\infinity,n)$-categories over $X$; and on the other hand, a category of iterated modules over the $\Ebb_{n+1}$-algebra of chains over the $(n+1)$-fold loop space $
\mathrm{C}_{\bullet}(\Omega_*^{n+1}X;\Bbbk)
$. 
This fits well with the familiar picture according to which higher local systems should have monodromy along higher dimensional spheres.

Much of our interest in these questions stems from that fact that categorical local systems  and more generally schobers, i.e., categorical local systems  with singularities, play an increasing role in symplectic geometry and mirror symmetry. They also feature prominently in recent approaches to 3d mirror symmetry  
\cite{gammage2023perverse}. This is a mysterious duality which is only beginning to be explored, and that has deep connections with many areas of mathematics and particularly geometric representation theory, where is sometimes referred to as symplectic duality \cite{braden2016conical}. 
We are particularly indebted to ideas of Teleman on 3d homological mirror symmetry, and some of our results rigorously formalise  insights that first appeared in his \cite{teleman}. The thesis of Toly Preygel \cite{preygel2011thomsebastianidualitymatrix}  also sketches some ideas that we have formalised.

We shall give next a more analytical description of the contents of the paper, and state our main results. In the final section of the introduction we shall explain more broadly the motivations of our work coming from symplectic geometry.


\subsection{Main results}
\label{mainresults}
In \cref{sec:localsystemsinfinityn} of the main text we survey briefly all preliminary material which will be required in the remainder of the paper. In particular we give  a thorough overview of the basic definitions and results in the theory of local systems. 
Let $\scrC$ be an $(\infinity,1)$-category, and let $X$ be a space. We define the category of $\scrC$-valued local systems on $X$ as the category of functors$$
\LocSys(X;\scrC) \coloneqq\Fun(X, \hsp \scrC)
$$between $X$ (viewed as an $\infinity$-groupoid) and $\scrC$. A general form of the monodromy equivalence for sheaves with values in $\scrC$ was  proved by Lurie, and then by Beardsley and P\'eroux \cite{koszulduality}  in a formulation which is more directly relevant for us. Namely, let $X$ be a connected space, and let  $\scrC$ be a \emph{presentable} $(\infinity,1)$-category, then there exists an equivalence of $(\infinity,1)$-categories
\begin{equation}
\label{master}
\LC(X;\scrC)\simeq \LMod_{\Omega_*X}{\lp\scrC\rp}. 
\end{equation}

Equivalence \eqref{master} is fundamental, but its generality is too narrow to be applicable to local systems of categories. For instance, we would like to take 
$\scrC$ to be $\PrLU$ itself, as this would allow us to describe local systems whose  sections are presentable categories; however $\PrLU$ is  not presentable, but only cocomplete, and therefore does not fall under the scope of the previous statement. In \cref{sec:categorifyingmodules} we address  this issue, by proving that cocompleteness is sufficient to obtain a monodromy description of local systems.

 \begin{customthm}{A}[\cref{monodromyeq}, \cref{cor:koszulpresentable}]
\label{intromain0}
Let $X$ be a connected space, and let $\scrC$ be a cocomplete (not necessarily presentable) category. Then there exists an equivalence of categories
$$\LocSys(X;\scrC)\simeq \LMod_{\Omega_*X}{\lp\scrC\rp}.$$
In particular, when $\scrC = \PrLU$ we obtain an equivalence of categories
\begin{equation}
\label{master2}
\LocSysCat(X)\simeq \LMod_{\Omega_*X}{\lp\PrLU\rp}.
\end{equation}
\end{customthm}

In \cref{sec:highermonodromydata} we  generalise equivalence \eqref{master2} 
  to local systems of \emph{presentable} $(\infinity,n)$-categories. 
  Presentable  $(\infinity,n)$-categories have been recently introduced by Stefanich \cite{stefanich2020presentable}. For the benefit of the reader we include a summary of the theory in the main text. For the sake of this introduction however we will limit ourselves to say that the category $\PrLU[n]$ of presentable $(\infinity,n)$-categories is symmetric monoidal and that it is a $n$-fold delooping of $\PrLU$ (i.e. we can recover $\PrLU$ by taking iterated endomorphisms of the unit). It enjoys many of the formal properties of $\PrLU$, and as such it provides a favourable environment for $(\infinity,n)$-category theory. We denote the incarnation as a $(\infinity,n+1)$-category of $\PrLU[n]$ as $(n+1)\PrLUtwo[n].$
The next is one of our main results. We comment on the statement below.  
\begin{customthm}{B}[\cref{conj:infinityn}]
\label{intromain1}
Let $n\geqslant 1$ be an integer, let $X$ be a pointed $n$-connected space (i.e., $\pi_k(X)\cong 0$ for every $k\leqslant n$). Then there exist equivalences of $(\infinity,n+1)$-categories
\begin{align*}
(n+1)\LocSysCattwo^n(X)&\simeq  (n+1)\mathbf{LMod}_{n\LModtwo_{\Omega_*^{n+1}X}(\scrS)}\PrLUtwo[n].
\end{align*}
\end{customthm}
Let us make some comments on the statement, as some of the notations will only be introduced in the main text. The category on the left hand side is the $(\infinity,n+1)$-category of local systems of presentable $(\infinity,n)$-categories over $X$; the category on the right hand side is the $(\infinity,n+1)$-category of presentable $(\infinity,n)$-categories with an action of the presentable $(\infinity,n)$-category of iterated left modules over the grouplike topological $\mathbb{E}_{n+1}$-monoid $\Omega_*^{n+1}X$. In fact, the connectedness assumptions on $X$ can be dropped, as we clarify in Paragraph \ref{remark:nconnectedness} of the main text. We also stress that in the main text we always work relative to a monoidal presentable category $\scrA$: for simplicity, above,  we stated   our result only in the absolute case, when $\scrA$ is the $(\infinity,1)$-category of topological spaces $\scrS$. Of particular interest for applications is also the stable setting when $\scrA$ is e.g. the $(\infinity,1)$-category of spectra or $\Bbbk$-modules for a field $\Bbbk$. In this latter case, the corresponding $(\infinity,n+1)$-category on the right hand side in \cref{intromain1} can be interpreted as an $(\infinity,n+1)$-category of ``presentable $\mathrm{C}_{\bullet}(\Omega^{n+1}_*X;\Bbbk)$-linear $n$-categories".

It is implicit in the statement of \cref{intromain1}  
that we can make sense of the action of a topological monoid on a category.   
This is built in in $\infinity$-category theory: by definition all cocomplete $(\infinity,1)$-categories (e.g. $\PrLU$) are  tensored over the $(\infinity,1)$-category of spaces $\scrS$. This provides a natural notion of the action of a monoid in $\scrS$ on  an object in a cocomplete category, and so in particular on a presentable category (i.e. an object in $\PrLU$, which is itself cocomplete). Working in the dg setting  makes the nature of topological actions on categories less evident, and in fact Teleman proposes various alternative definitions in \cite{teleman}, before settling on one. We show that Teleman's preferred model coincides with the natural concept of topological action in $\infinity$-category theory, and   turn Teleman's beautiful ansatz into a theorem.

\begin{customthm}{C}[\cref{thm:teleman}]
\label{thm:main2}
Topological actions of a connected group $G$ on a differential graded category $\scrC$ which is linear over some base commutative ring $\Bbbk$ are completely captured, up to contractible choices, by the induced $\Ebb_2$-algebra morphisms$$\mathrm{C}_{\bullet}{\lp\Omega_*G;\Bbbk\rp}\longrightarrow\operatorname{HH}^{\bullet}(\scrC)$$where the source is simply the algebra of chains of $\Omega_*G$ with coefficients in $\Bbbk$, endowed with the Pontrjagin product, and the target is the Hochschild cohomology of the differential graded category $\scrC$.
\end{customthm}

We remark that Theorem \ref{thm:main2} appears as \cite[Theorem $2.5$]{teleman}. We stress however that in \cite{teleman} this claim appears without proof: in fact one could argue that rather than a theorem,   it is a reformulation of Teleman's definition of a topological action on dg-categories. Within the framework of $\infinity$-categories, however, it becomes a  a non-tautological statement about actions of topological monoids, and we will prove it rigorously. In this respect, we believe that our contribution   consists in providing a   formalisation  of Teleman's insight within the theory of $\infinity$-categories.

In fact, in the main text we prove  a more precise and more general statement, which applies to  a presentable category $\scrC$ which is linear over a presentably symmetric monoidal category $\scrA$. Namely, let  $\LMod_{\Omega_*G\otimes\boldone_{\scrA}}(\scrA)\text{-}\operatorname{ModStr}(\scrC)$ be the space of all possible left $\LMod_{\Omega_*G\otimes \boldone_{\scrA}}{\lp\scrA\rp}$-module structures on $\scrC$, and let $G\text{-}\operatorname{ModStr}(\scrC)$ be the space of all possible left $G$-module structures on $\scrC$. Then \cref{lemma:hochschildand2modules} in the main text states  that there are equivalences of spaces
	\begin{equation}
	\label{eqeqintro}
	\Map_{\Alg_{\Ebb_2}(\scrA)}{\lp\Omega_*G\otimes \boldone_{\scrA},\hsp \HH^{\bullet}(\scrC)\rp}
	\simeq 
	\LMod_{\Omega_*G\otimes\boldone_{\scrA}}(\scrA)\text{-}\operatorname{ModStr}(\scrC)
	 \simeq  
	  G\text{-}\operatorname{ModStr}(\scrC).
	\end{equation}
	Note that when $\scrA= \Mod_{\Bbbk}$ the chain of equivalences \ref{eqeqintro} specializes to a refinement of \cref{thm:main2}, where the topology of the moduli spaces is also taken into account. 
	
We also obtain a generalization of \ref{eqeqintro}  to the setting of topological actions on presentable $(\infinity,n)$-categories. The study of the category $(n+1)\PrLUtwo[n]$ is still in its initial stages, and some of its fundamental properties have yet to be  established. In particular it is expected, but not known, that the symmetric monoidal structure on $n\PrLUtwo[n-1]$ is closed when $n \geqslant 3$. We state this explicitly as \cref{conj:notenriched} in the main text. Under the assumption that \cref{conj:notenriched} holds, we prove the following result.

	\begin{customthm}{D}[\cref{cor:en}]
\label{thm:main3}
	Let $\scrA$ be a presentably symmetric monoidal $(\infinity,1)$-category with monoidal unit $\boldone_{\scrA}$, and let $n\geqslant 1$ be an integer. Let $n\bm{\scrC}$ be an $\scrA$-linear presentable $n$-category, and let $G$ be an $(n-1)$-connected topological group. Let $G\text{-}\operatorname{ModStr}(n\bm{\scrC})$ denote the space of all possible left $G$-module structures on $n\bm{\scrC}$. If \cref{conj:notenriched} holds, there is an equivalence of spaces
	$$
	\Map_{\Alg_{\Ebb_n}(\scrA)}{\lp \Omega^n_*G\otimes\boldone_{\scrA} ,\hsp \mathrm{HH}^{\bullet}_{\Ebb_n}(\bm{\scrC}) \rp}
	 \simeq  
	  G\text{-}\operatorname{ModStr}(n\bm{\scrC}).
	$$
\end{customthm}
	
As a consequence of \cref{thm:main2} and \cref{thm:main3}, we obtain a classification of invertible objects in $(n+1)\LocSysCattwo^n(X)$, which is unconditional in the case of $n=1$ (i.e., local systems of presentable categories); but for $n\geqslant2$ it depends on \cref{conj:notenriched} and on a still conjectural characterization of higher Brauer groups of algebraically closed fields.
	\begin{customthm}{E}[\cref{prop:invertibleobjectsoflocsyscat}, \cref{highertele}]
\label{thm:main4}
	\label{prop:invertibleobjectsoflocsyscat2}
Let $\Bbbk$ be an algebraically closed field. Let $n \geqslant1$ and let $X$ be an $n$-connected space. Then we have an isomorphism of abstract groups
\[
\pi_0{\lp\lp (n+1)\LocSysCattwo^n(X)^{\mathrm{inv}}\rp^{\simeq}\rp}\cong\Hom_{\mathrm{Grp}}{\lp \pi_n(X),\hsp \Bbbk^{\times}\rp}
\]
between the group of equivalence classes of invertible local systems of $\Bbbk$-linear presentable $(\infinity,n)$-categories on $X$, and the group of multiplicative characters of $\pi_{n+1}(X)$. 
\end{customthm}
The group $\pi_0\lp (n+1)\LocSysCattwo^n(X)^{\mathrm{inv}}\rp^{\simeq}$ should be understood as a topological analogue of the (higher) derived Brauer group of $X$; or equivalently of its Betti stack. 
 In fact, the connectedness assumptions could be dropped, but at the cost of a  more cumbersome statement which we leave to the interested reader.

  We conclude the paper with a short section explaining the relevance of our results to symplectic geometry. The message is that local systems of $(\infinity,1)$-categories arise naturally from Hamiltonian fibrations, by taking the Fukaya $(\infinity,1)$-categories of the fibers. More precisely, as an easy consequence of our results and important previous work of Oh--Tanaka \cite{oh-tanaka} and Savelyev \cite{savelyev}, we obtain the following theorem.
  
\begin{customthm}{F}[\cref{hamfib}]
\label{thm:main5}
	\label{symp}
  Let $\pi\colon X \to S$ be a Hamiltonian fibration, such that the fibers $(X_{s},\omega_{s})$ are either compact monotone or are Liouville sectors. Then there is an associated local system of $(\infinity,1)$-categories over $S$ whose fiber over $s \in S$ is the Fukaya category of $(X_{s},\omega_{s})$.
\end{customthm}

}
 
\subsection*{Notations and conventions}
\begin{itemize}
\item We will use throughout the language of $(\infinity,1)$-categories and higher homotopical algebra, as developed in \cite{htt,ha}, from which we borrow most of the notations and conventions.
\item Since our work heavily relies on intrinsically derived and homotopical concepts, we shall simply write ``limits", ``colimits", ``tensor product", suppressing adjectives such as ``homotopy" or ``derived" in our notations. Similarly, we shall simply write ``categories" instead of ``$\infinitone$-categories", and ``$n$-categories" instead of ``$(\infinity,n)$-categories". 
\item We will work with \textit{local systems} and \textit{sheaves} of categories, and it will be important pay attention to size issues. We fix a sequence of nested universes $\scrU \in\scrV\in\mathscr{W}\in\ldots$. We shall say that a category $\scrC$ is \textit{small} if it is $\scrU$-small, that $\scrC$ is \textit{large} if it is $\scrV$-small without being $\scrU$-small, that $\scrC$ is \textit{very large} if it is $\mathscr{W}$-small without being $\scrV$-small, and that $\scrC$ is \textit{huge} if it is not even $\mathscr{W}$-small. When dealing with categories of (possibly decorated) categories, we shall adopt the following notations in order to distinguish the size: large categories of categories will be denoted with a normal font; very large categories of categories will be denoted with $\smash{\widehat{(-)}}$; huge categories of categories will be denoted with $\smash{\widehat{(-)}}$ and capital letters.\\
For example, $\CatUinfty$ is the large category of small categories, while $\CatVinfty$ is the very large category of large categories, and $\CatWinfty$ is the huge category of very large categories.
\item We shall denote the large category of small spaces by $\scrS$. In particular, by \textit{space} we always mean \textit{small space}.
\item The large category $\PrLU$ of large presentable categories and the very large category of $\CatVinftycocom$ of large cocomplete categories are both symmetric monoidal categories: $\Ebb_k$-algebras inside $\PrLU$ and $\CatVinftycocom$ are (respectively) presentable and cocomplete categories endowed with an $\Ebb_k$-monoidal structure that commutes with colimits separately in each variable. In order to compactify our notations, in the rest of our paper we shall refer to an $\Ebb_k$-algebra in $\PrLU$ as a \textit{presentably $\Ebb_k$-monoidal category}, and to an $\Ebb_k$-algebra in $\CatVinftycocom$ as a \textit{cocompletely $\Ebb_k$-monoidal category}; in the case $k=\infinity$ we shall simply write \textit{symmetric monoidal} in place of $\Einf$-monoidal. The notation for $\Ebb_k$-algebras in $\CatVinftycocom$ can sound ambiguous, since an $\Ebb_k$-monoidal structure on a cocomplete category can fail to be compatible with colimits: for an easy counterexample, just consider the category of pointed spaces endowed with the Cartesian symmetric monoidal structure. However, we shall never be interested in such kind of monoidal structures in this work.
\item In a similar fashion, for any $k\in\NN_{\geqslant1}\cup\left\{\infinity\right\}$ and a cocompletely (resp. presentably) $\Ebb_k$-monoidal \infinity-category $\scrA$, we shall say that a category $\scrC$ is \textit{cocompletely} (resp. \textit{presentably}) left tensored over $\scrA$ if it is a left $\scrA$-module in $\CatVinftycocom$ (resp. in $\PrLU$). This formula amounts to the datum of a cocomplete (or presentable) category $\scrC$ which is left tensored over $\scrA$ in such a way that the tensor action functor commutes with colimits separately in each argument.
\item In \cref{sec:localsystemncat,sec:topactionncat} we shall deal with higher (i.e., $n$-)categories, and in particular with $(n+1)$-categories of (possibly decorated) $n$-categories. We shall denote such $n$-categories with a bold font. In order to avoid confusion concerning the ``categorical height" we are working at, we shall also adopt the following highly non-standard notation as well: if we want to refer to the (very large) higher category of large $m$-categories seen as a $n$-category, we shall write $n\CatVtwoinfty[m]$. In the particular case $n=1$, we shall drop both the bold font and the $1$ before our notations, and simply write $\CatVinfty[m]$. For example, $3\CatVtwoinfty[2]$ is the very large $3$-category of all large $2$-categories, while $2\CatVtwoinfty[2]$ is its underlying $2$-category, and $\CatVinfty[2]$ is its underlying $1$-category. (See also \cref{notation:highercat}.)
\item Most of the times we will consider categories which are enriched over some preferred category (e.g., modules in spectra which are enriched over themselves, or presentably enriched categories which are enriched over themselves, and so forth). At the same time, we will need to consider the underlying spaces of maps between objects in such categories. For this reason, when $\scrC$ is enriched over a category $\scrA$, we will denote as $\Map_{\scrC}(-,-)$ the space of maps in $\scrC$, and as $\Mapin_{\scrC}(-,-)$ 
the morphism object of $\scrA$ providing the enrichment, so as to to highlight whether we are seeing a morphism object as a space or as something more structured. If $\scrC$ is a higher category of categories (e.g., $\scrC=\CatVinftycocom$ or $\scrC=\PrLU$) we will also use $\Funin(-,-)$, possibly with decorations, to mean the category of structure-preserving functors which serves as the category of morphisms in $\scrC$.
\end{itemize}
\subsection*{Acknowledgements}
This project took quite a long time to complete, and along the way  we benefited enormously from conversations and email exchanges with friends and colleagues that helped us navigate the many subtle issues in the theory of higher local systems and $n$-category theory. We owe special thanks to Ivan Di Liberti, Andrea Gagna, Guglielmo Nocera, Mauro Porta and German Stefanich.
\addtocontents{toc}{\protect\setcounter{tocdepth}{2}}
\section{Categorical local systems and categorical loop space representations}
\label{sec:localsystemsinfinityn}
\numberwithin{equation}{subsection}
\subsection{Preliminaries on local systems in the homotopy setting}
\label{sec:locsys}
In this section we collect the first definitions and notations concerning categorical local systems over spaces, i.e., local systems over spaces with coefficients in some category of categories. Our preferred coefficients shall be the $2$-category of presentable categories, possibly enriched over a presentably symmetric monoidal category $\scrA$.

Given a (strict) topological space, we have a natural way to define what a local system with coefficients in some category is.
\begin{parag}
Let $X$ be a topological space, let $\mathrm{Op}(X)$ be the poset of its open subsets equipped with the Grothendieck topology $\tau$ generated by jointly surjective maps, and let $\scrC$ be any category. We can either consider the topos$$\mathrm{Shv}(X;\scrC)\coloneqq\mathrm{Shv}_{\tau}\lp\mathrm{Op}(X);\scrC\rp$$of $\scrC$-valued sheaves over $X$, or its hypercompletion$$\mathrm{Shv}^{\mathrm{hyp}}(X;\scrC)\coloneqq \widehat{\mathrm{Shv}}(X;\scrC).$$The latter is a localization of the former, i.e., it is a full subcategory closed under limits which admits a hypersheafification left adjoint$$(-)^{\mathrm{hyp}}\colon\mathrm{Shv}(X;\scrC)\longrightarrow\mathrm{Shv}^{\mathrm{hyp}}(X;\scrC).$$
\end{parag}
\begin{defn}
\label{def:localsystems}Let $X$ be a topological space and let $\scrC$ be any category.
\begin{enumerate}
    \item We say that a $\scrC$-valued  sheaf $\scrF$ over $X$ is \textit{constant} if it lies in the essential image of the pullback functor$$\Gamma^*\colon\scrC\simeq \mathrm{Shv}(\left\{*\right\};\scrC)\longrightarrow\mathrm{Shv}(X;\scrC).$$
    \item We say that a sheaf $\scrF$ is \textit{locally constant} if there exists a small collection of objects $\left\{ U_{\alpha}\hookrightarrow X\right\}_{\alpha}$ which is jointly surjective over $X$ such that $\restr{\scrF}{U_{\alpha}}$ is constant in $\mathrm{Shv}(U_{\alpha};\scrC)$.
    \item We say that a hypersheaf $\scrF$ over $X$ is \textit{hyperconstant} if it belongs to the essential image of the functor$$\Gamma^{\mathrm{hyp},*}\colon\scrC\overset{\Gamma^*}{\longrightarrow}\mathrm{Shv}(X;\scrC)\overset{(-)^{\mathrm{hyp}}}{\longrightarrow}\mathrm{Shv}^{\mathrm{hyp}}(X;\scrC).$$
    \item We say that a hypersheaf $\scrF$ over $X$ is \textit{hyperlocally hyperconstant} if there exists a small collection of objects $\left\{ U_{\alpha}\hookrightarrow X\right\}_{\alpha}$ which is jointly surjective over $X$ such that $\restr{\scrF}{U_{\alpha}}$ is hyperconstant in $\mathrm{Shv}^{\mathrm{hyp}}(U_{\alpha};\scrC)$.
\end{enumerate}
\end{defn}
Locally constant sheaves and locally hyperconstant hypersheaves  form  two full subcategories of $\mathrm{Shv}(X;\scrC)$ and $\mathrm{Shv}^{\mathrm{hyp}}(X;\scrC)$. Call them $\mathrm{LC}(X;\scrC)$ and $\mathrm{LC}^{\mathrm{hyp}}(X;\scrC)$, respectively.
\begin{warning}[{\cite[Warning $1.19$]{haineportateyssier}}]
\label{warning:locsysandlocsyshyp}It is not in general true that the natural inclusion$$\mathrm{LC}(X;\scrC)\cap\mathrm{Shv}^{\mathrm{hyp}}(X;\scrC)\subseteq\mathrm{LC}^{\mathrm{hyp}}(X;\scrC)$$is an equivalence of categories, unless $X$ is \textit{locally of constant shape} in the sense of \cite[Definition A.$4.15$]{ha}. In this case, all locally constant sheaves are locally hyperconstant (\cite[Corollary A$.1.7$]{ha}), hence the two expressions obviously match.
\end{warning}
\cite[Corollary $3.7$ and Observation $3.8$]{haineportateyssier} show that the correct notion of locally constant $\scrC$-valued sheaf over a topological space $X$  is  given by locally hyperconstant hypersheaves. If $\scrC$ is a presentable category, then 
\begin{equation}
\label{hyperdisplay}
\mathrm{LC}^{\mathrm{hyp}}(X;\scrC)\simeq\scrS_{/ X}\otimes\scrC\simeq\Fun(X,\hsp\scrC)
\end{equation}
where $\otimes$ denotes the Lurie tensor product of presentable categories and where we have implicitly identified $X$ and its fundamental groupoid $\Pi_{\scriptstyle\infty}(X)$.

We are mostly interested in considering topological spaces $X$ as objects in the category $\scrS$ of homotopy types, rather than in the point-set theoretic sense.   Equivalence   
\eqref{hyperdisplay} gives us a way to think about local systems on a strict topological spaces in terms of data that only depend on its underlying homotopy type, i.e. $\scrC$-valued functors out of $X$ (at least if $\scrC$ is presentable).  
 This motivates the following definition. 
\begin{notation}
\label{notation:locsys}
For $X$ a space and for $\scrC$ \textit{any} category, we set $$\LocSys(X;\scrC)\coloneqq\Fun(X,\hsp\scrC).$$If $\scrC\coloneqq\scrS$ is the category of spaces itself, we shall simplify notations and set $$\LocSys(X)\coloneqq\LocSys(X;\scrS).$$In the same way, if $\Bbbk$ is any $\Ebb_1$-ring spectrum and $\scrC\coloneqq\Mod_{\Bbbk}$ is the category of $\Bbbk$-modules in spectra, then we set $$\LocSys(X;\Bbbk)\coloneqq\LocSys(X;\Mod_{\Bbbk}).$$
In the rest of the paper, we shall often abuse notations and identify a topological space $X$ with its underlying homotopy type $\Pi_{\scriptstyle\infty}(X)$, and simply refer to it as a \textit{space}.
\end{notation}
For later use, we recall the following fundamental monodromy equivalence statement.
\begin{lemma}[{Monodromy equivalence, \cite[Lemma $3.9$]{koszulduality}}]
\label{lemma:koszulduality}
Let $X$ be a connected space, and let $\scrC$ be a presentable category. Then there exists an equivalence of categories$$\LocSys(X;\scrC)\simeq \LMod_{\Omega_*X}{\lp\scrC\rp}.$$
\end{lemma}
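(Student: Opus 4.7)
The plan is to reduce the statement to the universal property of $BG$ as the classifying space of the grouplike $\mathbb{E}_1$-monoid $G = \Omega_*X$, and then apply Barr--Beck--Lurie monadicity. First, since $X$ is connected and pointed, we have an equivalence $X \simeq B\Omega_*X$ in the category $\scrS$ of spaces. Setting $G \coloneqq \Omega_*X$, which is a grouplike $\mathbb{E}_1$-monoid in $\scrS$, we may rewrite $\LocSys(X;\scrC) \simeq \Fun(BG, \scrC)$. Because $\scrC$ is presentable, it is naturally tensored over $\scrS$, so $G$ acts on objects of $\scrC$ through this $\scrS$-linear structure, giving a well-defined category $\LMod_G(\scrC)$.

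Next, I would study the evaluation functor $\operatorname{ev}_* \colon \Fun(BG, \scrC) \to \scrC$ obtained by restriction along the basepoint inclusion $* \hookrightarrow BG$. This functor is conservative, using that $BG$ is connected and that a natural transformation of functors out of a connected space is an equivalence as soon as it is at any point; and it preserves all small colimits, since colimits in functor categories valued in a cocomplete target are computed pointwise. Because $\scrC$ is presentable and $\operatorname{ev}_*$ is accessible, it admits a left adjoint $L \colon \scrC \to \Fun(BG, \scrC)$, given by left Kan extension along $* \to BG$. The Kan extension formula, combined with the identification of the comma object $\{*\} \times_{BG} \{*\} \simeq G$ (which is the very definition of $G = \Omega_* BG$), shows that the underlying endofunctor of the induced monad $T = \operatorname{ev}_* \circ L$ is canonically equivalent to $G \otimes (-)$, where $\otimes$ denotes the tensoring of $\scrC$ over $\scrS$.

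At this point the Barr--Beck--Lurie monadicity theorem applies: $\operatorname{ev}_*$ is conservative, preserves the geometric realizations of $\operatorname{ev}_*$-split simplicial objects (indeed all colimits), and admits a left adjoint. Therefore it exhibits $\Fun(BG, \scrC)$ as the category of modules over the monad $T$. To finish, one must identify $T$, as an $\mathbb{E}_1$-algebra in endofunctors of $\scrC$, with the monad $G \otimes (-)$ associated to the $\mathbb{E}_1$-algebra structure on $G$ in $\scrS$ and the $\scrS$-linearity of $\scrC$, since then $\LMod_T(\scrC) \simeq \LMod_G(\scrC)$ by definition.

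The main obstacle, and the place where care is required, is the last step: matching the monad structures. The underlying endofunctor equivalence $T \simeq G \otimes (-)$ is a routine consequence of the pointwise formula for left Kan extension, but the monad multiplication $T \circ T \Rightarrow T$ coming from the adjunction needs to be compared with the multiplication $G \otimes G \otimes (-) \to G \otimes (-)$ induced by the $\mathbb{E}_1$-structure of $G$. I would handle this by unwinding both structures as lax monoidal enhancements of the functor $G \otimes (-)$ and invoking the uniqueness of such enhancements given the canonical identification of the iterated comma construction with $G \times G$, which reflects the composition of based loops in $BG$. This is essentially a coherence check, and the cleanest way to perform it is to view the entire argument as taking place internally to the $\scrS$-enriched setting and leveraging the universal property of $BG$ as the bar construction $|G^{\bullet}|$, so that $\Fun(BG, \scrC) \simeq \operatorname{Tot}(\Fun(G^{\bullet}, \scrC))$ acquires its module structure tautologically.
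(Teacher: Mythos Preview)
Your proposal is correct but follows a genuinely different route from the one the paper records. The paper (following Beardsley--P\'eroux) proves the lemma via the chain of equivalences
\[
\Fun(X,\scrC)\;\overset{\text{Yoneda}}{\simeq}\;\FuninL\!\bigl(\Fun(X^{\op},\scrS),\scrC\bigr)\;\overset{\text{straightening}}{\simeq}\;\FuninL\!\bigl(\scrS_{/X},\scrC\bigr)\;\simeq\;\FuninL\!\bigl(\LMod_{\Omega_*X}(\scrS),\scrC\bigr)\;\overset{\text{Eilenberg--Watts}}{\simeq}\;\RMod_{\Omega_*X}(\scrC),
\]
followed by the observation that grouplikeness of $\Omega_*X$ identifies left and right modules. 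In particular the key identification $\scrS_{/X}\simeq\LMod_{\Omega_*X}(\scrS)$ is imported from \cite[Remark 5.2.6.28]{ha}, and the passage to general $\scrC$ is handled by the categorical Eilenberg--Watts theorem.

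Your approach via Barr--Beck--Lurie monadicity of $\operatorname{ev}_*$ is more self-contained and makes the role of the $\scrS$-tensoring on $\scrC$ explicit. The trade-off is exactly the point you flag: matching the monad $\operatorname{ev}_*\circ L$ with $G\otimes(-)$ as $\Ebb_1$-algebras in endofunctors is a genuine coherence check, and your sketch via the bar resolution $BG\simeq|G^{\bullet}|$ is the right way to do it. The paper's chain sidesteps this by absorbing the coherence into the cited equivalence $\scrS_{/X}\simeq\LMod_{\Omega_*X}(\scrS)$ and the Eilenberg--Watts machinery; this packaging also makes naturality in $X$ and compatibility with monoidal structures easier to track, which the paper exploits heavily in the sequel (\cref{prop:koszulisnatural}, \cref{prop:koszulismonoidal}). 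Your argument would require separate work to extract those naturality statements.
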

\subsection{The monodromy equivalence for cocomplete categories of coefficients}
\label{sec:presentableinftycat}
\label{parag:catinftypresentable}
Our goal in this Section is generalizing   \cref{lemma:koszulduality}  to cocomplete categories which are not necessarily presentable.  In order to explain why this is important for our project, we  have to introduce  first some  objects which will play a key role in the sequel.

Let  $\scrA$ be a presentably symmetric monoidal category.  Let 
$\smash{\Mod_{\scrA}{\lp\PrLU\rp}}$ 
 the category of presentable categories which are presentably tensored over $\scrA$. By \cite[Theorem $1.2$]{heineenrichedaction}, this is the same as the category of presentable categories enriched over $\scrA$ in the sense of \cite{gepnerhaugseng}. In symbols, there is an equivalence
\begin{equation}
\label{onamelin}
\Mod_{\scrA}{\lp\PrLU\rp} \simeq \operatorname{Lin}_{\scrA}\PrLU. 
\end{equation}
In the sequel we shall always use the  notation $\operatorname{Lin}_{\scrA}\PrLU$ to refer to this category. 
In the particular case in which $\scrA$ is the category of $\Bbbk$-modules in spectra for some $\Einf$-ring spectrum $\Bbbk$ we shall simply write  
$\LinkPrLU$. This includes the case in which $\Bbbk=\Sphere$ is the sphere spectrum, hence $\operatorname{Lin}_{\Sphere}\PrLU\simeq\operatorname{Pr}^{\mathrm{L,st}}_{(\scriptstyle\infty,1)}$ denotes the category of stable presentable categories.
\begin{remark}
Note that if we are considering left $\scrA$-modules in the larger category of categories $\CatVinfty$, left $\scrA$-modules and $\scrA$-enriched categories are \textit{not} equivalent anymore. Indeed \cite[Theorem $1.1$]{heineenrichedaction} guarantees only an equivalence between \textit{closed} left $\scrA$-modules in $\CatVinfty$ and a \textit{non-full} subcategory of all $\scrA$-enriched categories. 
\end{remark}
\begin{notation} 
We set  
$$
\LocSysCat(X) \coloneqq {\LocSys}{\lp X;\PrLU\rp} \quad \text{and}\quad \LocSysCat(X;\scrA)\coloneqq {\LocSys}{\lp X;\LinAPrLU\rp}.
$$
\end{notation}
Note that, although both $\PrLU$ and $\LinAPrLU$ are complete and  cocomplete categories, they are \textit{not} presentable. Thus we cannot    apply   \cref{lemma:koszulduality} directly to $\LocSysCat(X)$ and $\LocSysCat(X;\scrA)$. The main result of this Section, \cref{monodromyeq},  generalizes  \cref{lemma:koszulduality}  to cocomplete categories and thus will allow us to circumvent this difficulty.

The following recent result of Stefanich is the key ingredient in the generalization of \cref{lemma:koszulduality} to cocomplete categories. 
\begin{proposition}[{\cite[Propositions $5.1.4$ and $5.1.14$]{stefanich2020presentable}}]
 \label{prop:stefanichcompgen}
    Let $\scrA$ be any presentably symmetric monoidal category, let $\operatorname{Lin}_{\scrA}\CatVinftycocom$ be the category of $\scrA$-linear cocomplete categories, and let $\kappa_0$ be the smallest large cardinal of the theory. Then $\operatorname{Lin}_{\scrA}\CatVinftycocom$ is $\kappa_0$-compactly generated by the category $\LinAPrLU$ of presentably $\scrA$-linear categories.
 \end{proposition}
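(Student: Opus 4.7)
My plan is to establish the two defining conditions of $\kappa_0$-compact generation separately: first, that every presentable $\scrA$-linear category is $\kappa_0$-compact as an object of $\operatorname{Lin}_{\scrA}\CatVinftycocom$; and second, that every $\scrA$-linear cocomplete category is a $\kappa_0$-filtered colimit in $\operatorname{Lin}_{\scrA}\CatVinftycocom$ of such presentable ones.

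For the first half, I would exploit the characterization of presentability via Ind-completions. Given $\scrE \in \LinAPrLU$, pick a small regular cardinal $\kappa < \kappa_0$ such that $\scrE$ is $\kappa$-presentable; then $\scrE \simeq \Ind^{\scrA}_{\kappa}\lp\scrE^{\kappa}\rp$, where $\scrE^{\kappa}$ denotes the (small) full $\scrA$-linear subcategory of $\kappa$-compact objects. By the universal property of the enriched Ind-completion, a morphism $\scrE \to \scrD$ in $\operatorname{Lin}_{\scrA}\CatVinftycocom$ amounts to an $\scrA$-linear functor $\scrE^{\kappa} \to \scrD$ preserving $\kappa$-small colimits; and since $\scrE^{\kappa}$ is small, the mapping space out of $\scrE$ is then computed levelwise along $\kappa_0$-filtered colimits of targets, which is exactly $\kappa_0$-compactness.

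For the second half, given a cocomplete $\scrA$-linear $\scrD$, I would consider the poset of small $\scrA$-linear full subcategories $\scrD_{\alpha} \subseteq \scrD$, together with small regular cardinals $\kappa_{\alpha} < \kappa_0$ such that each inclusion $\scrD_{\alpha} \hookrightarrow \scrD$ lands in the $\kappa_{\alpha}$-compact objects; the enriched Ind-completion $\Ind^{\scrA}_{\kappa_{\alpha}}\lp\scrD_{\alpha}\rp$ is an object of $\LinAPrLU$, and its universal property produces a canonical cocontinuous $\scrA$-linear functor $\Ind^{\scrA}_{\kappa_{\alpha}}\lp\scrD_{\alpha}\rp \to \scrD$. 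The indexing poset is $\kappa_0$-filtered, because any $\kappa_0$-small family of small subcategories is still contained in a small one. It then remains to show that the colimit of this diagram inside $\operatorname{Lin}_{\scrA}\CatVinftycocom$ recovers $\scrD$ itself, hence expresses $\scrD$ as a $\kappa_0$-filtered colimit of objects of $\LinAPrLU$.

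The main obstacle is the last verification, since colimits in $\operatorname{Lin}_{\scrA}\CatVinftycocom$ are not straightforward colimits in the underlying category $\CatVinfty$: one must keep track of both the cocomplete and the $\scrA$-linear structure. The natural strategy is to reduce to the unlinearized case via the monadic free/forgetful adjunction $\CatVinftycocom \myrightleftarrows \operatorname{Lin}_{\scrA}\CatVinftycocom$, whose left adjoint is $(-) \otimes \scrA$, and then, in the case $\scrA = \scrS$, to compute the colimit using the fact that every cocomplete category is the union of its small full subcategories together with the universal property of Ind-completion. A subsidiary technical check, which ultimately rests on Stefanich's own machinery for enriched presentability, is that $\Ind^{\scrA}_{\kappa}$ is compatible with the $\scrA$-action and indeed yields an object of $\LinAPrLU$.
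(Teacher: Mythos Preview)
The paper does not prove this proposition: it is stated as a direct citation of \cite[Propositions 5.1.4 and 5.1.14]{stefanich2020presentable} and used as a black box throughout. Your sketch is, in outline, the strategy Stefanich himself follows, so you are on the right track.

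The gap you flag is genuine and is where the actual work lies. Both halves of your argument tacitly assume that $\kappa_0$-filtered colimits in $\operatorname{Lin}_{\scrA}\CatVinftycocom$ are computed at the level of underlying categories. For $\scrA = \scrS$ this is \cite[Proposition 5.5.7.11]{htt}; the $\scrA$-linear case then follows because the forgetful functor $\operatorname{Lin}_{\scrA}\CatVinftycocom \to \CatVinftycocom$ is monadic for the $\kappa_0$-accessible monad $\scrA \otimes (-)$ and hence creates $\kappa_0$-filtered colimits. Once this is established, your compactness argument goes through (mapping out of $\Ind_{\kappa}(\scrE^{\kappa})$ is governed by a small category, so commutes with $\kappa_0$-filtered colimits computed underlying), and your generation argument reduces to the elementary observation that $\scrD$ is the union of its small full subcategories in $\CatVinfty$. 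Stefanich's treatment also handles the subsidiary check you mention, namely that the enriched $\Ind_{\kappa}^{\scrA}$ lands in $\LinAPrLU$; this is part of his development of $\scrA$-linear presentability and is not entirely formal.
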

 Using \cref{prop:stefanichcompgen}, we shall write any cocomplete category $\scrC$ as a (large) filtered colimit of presentable categories, and then use \cref{lemma:koszulduality} to deduce a more general version of the monodromy equivalence for categorical local systems. First, we need to establish that the operation of taking modules for a topological $\Ebb_k$-monoid commutes with colimits of large cocomplete categories.
\begin{lemma}
\label{lemma:modprL}
Let $\CatVinftycocom$ be  the very large category of large cocomplete categories with cocontinuous functors between them. For any $\Ebb_k$-monoid in spaces $A$, the functor$$\LMod_A\colon\CatVinftycocom\longrightarrow\CatVinftycocom$$is part of an ambidextrous adjunction$$\adjunction{\LMod_A}{\CatVinftycocom}{\CatVinftycocom}{\RMod_A}.$$
\end{lemma}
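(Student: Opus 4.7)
The plan is to identify the endofunctor $\LMod_A$ on $\CatVinftycocom$ with the operation of tensoring with the presentable category $\LMod_A(\scrS)$, and then exploit the Morita-style duality between $\LMod_A(\scrS)$ and $\RMod_A(\scrS)$ in order to read off both halves of the ambidextrous adjunction at once.

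First I would establish a natural equivalence $\LMod_A(\scrC)\simeq\LMod_A(\scrS)\otimes\scrC$ for every $\scrC\in\CatVinftycocom$, and symmetrically $\RMod_A(\scrC)\simeq\RMod_A(\scrS)\otimes\scrC$. Every cocomplete category is canonically tensored over the unit $\scrS$ of the Lurie tensor product on $\CatVinftycocom$, so forming left $A$-modules internally to $\scrC$ is nothing but base change of the $\Ebb_1$-algebra $A\in\scrS$ along this $\scrS$-action; the comparison functor is obtained as the unique cocontinuous extension of the free $A$-module map $A\otimes(-)\colon\scrC\to\LMod_A(\scrC)$.

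Next I would verify that $\LMod_A(\scrS)$ is dualizable in $\PrLU$ with dual $\RMod_A(\scrS)$. This is classical Morita theory internal to $\infinity$-categories: the bimodule $A$ supplies a coevaluation $\scrS\to\RMod_A(\scrS)\otimes\LMod_A(\scrS)$ and an evaluation $\LMod_A(\scrS)\otimes\RMod_A(\scrS)\to\scrS$ given by the relative tensor product over $A$, satisfying the standard triangle identities. Since the symmetric monoidal embedding $\PrLU\hookrightarrow\CatVinftycocom$ preserves the tensor product on dualizable objects and their structure maps, the same duality persists in $\CatVinftycocom$.

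Putting the pieces together, tensoring with any dualizable object $X$ of a symmetric monoidal category is left adjoint to tensoring with its dual $X^{\vee}$, and, since $(X^{\vee})^{\vee}\simeq X$ canonically, it is also right adjoint to tensoring with $X^{\vee}$. Applied to $X=\LMod_A(\scrS)$, and combined with the identifications from the first step, this yields the desired ambidextrous adjunction $\LMod_A\dashv\RMod_A\dashv\LMod_A$. The main technical obstacle will be the first step, i.e.\ justifying the base-change equivalence $\LMod_A(\scrC)\simeq\LMod_A(\scrS)\otimes\scrC$ in the full generality of $\CatVinftycocom$ rather than just $\PrLU$; this can be handled by invoking \cref{prop:stefanichcompgen} to write $\scrC$ as a filtered colimit of presentable categories, and observing that both sides commute with such colimits (which is automatic for the right-hand side, since tensoring with a dualizable object is in particular a left adjoint).
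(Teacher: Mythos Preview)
Your strategy coincides with the paper's: identify $\LMod_A(-)$ with $\LMod_A(\scrS)\otimes(-)$ and then use that $\LMod_A(\scrS)$ and $\RMod_A(\scrS)$ are mutually dual. The paper phrases the second step through the closed structure on $\CatVinftycocom$ (the internal hom $\FuninL$) rather than through abstract dualizability, but these are two sides of the same coin once one knows $\LMod_A(\scrS)^\vee\simeq\RMod_A(\scrS)$.

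There is, however, a circularity in your handling of the first step. You propose to establish $\LMod_A(\scrC)\simeq\LMod_A(\scrS)\otimes\scrC$ for arbitrary cocomplete $\scrC$ by writing $\scrC\simeq\colim_i\scrC_i$ with $\scrC_i$ presentable via \cref{prop:stefanichcompgen}, using the known presentable case, and then passing the colimit through both sides. But for the left-hand side this requires $\LMod_A(\colim_i\scrC_i)\simeq\colim_i\LMod_A(\scrC_i)$, i.e.\ that $\LMod_A$ preserves this filtered colimit---and colimit preservation is precisely what being a left adjoint gives you, which is the content of the lemma you are proving. You justify the commutation only for the right-hand side. The paper avoids this by invoking the categorical Eilenberg--Watts theorems \cite[Theorems 4.8.4.1 and 4.8.4.6]{ha}, which supply the equivalences $\LMod_A(\scrS)\otimes\scrC\simeq\LMod_A(\scrC)$ and $\FuninL(\LMod_A(\scrS),\scrC)\simeq\RMod_A(\scrC)$ directly for all cocomplete $\scrC$, with no reduction to the presentable case required; the naturality of these identifications in $\scrC$ is then verified separately in \cref{lemma:eilenbergwattsnatural}.
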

\begin{proof}
First, let us remark that both functors $\LMod_A$ and $\RMod_A$ are actually well defined, since $\CatVinftycocom$ is a symmetric monoidal category under Lurie's tensor product (\cite[Corollary $4.8.1.4$]{ha}) with unit provided by the category of spaces $\scrS$. In particular,$$\Mod_{\scrS}{\lp\CatVinftycocom\rp}\simeq\CatVinftycocom,$$hence any cocontinuous functor between cocomplete categories is $\scrS$-linear, in the sense of \cite[Definition $4.6.2.7$]{ha}. Moreover, for any cocomplete category $\scrC$ the categorical Eilenberg-Watts Theorems (see \cite[Theorems $4.8.4.1$ and $4.8.4.6$]{ha}) yield equivalences:\begin{enumerate}
\item ${\FuninL}{\lp\LMod_A(\scrS),\hsp\scrC\rp}\simeq\RMod_A(\scrC)$ and ${\FuninL}{\lp\RMod_A(\scrS),\hsp\scrC\rp}\simeq\LMod_A(\scrC)$.
\item $\scrC\otimes\RMod_A(\scrS)\simeq \RMod_A(\scrC)$ and $\LMod_A(\scrS)\otimes\scrC\simeq \LMod_A(\scrC)$.
\end{enumerate}
In the first statement, $\FuninL$ denotes the category of cocontinuous functors, which plays the role of a internal mapping object for the \textit{closed} symmetric monoidal structure of $\CatVinftycocom$ (\cite[Remark $4.8.1.6$]{ha}), while in the second statement $\otimes$ denotes Lurie's tensor product of cocomplete of categories. We make the following remarks: 
\begin{enumerate}
    \item Let  $\scrA$ be a cocomplete monoidal category, let $A$ be a $\Ebb_k$-monoid  inside $\scrA$, and let $\scrC$ be a cocomplete category which is left tensored over $\scrA$.  Then the cocomplete category $\RMod_A(\scrC)$ is only a left $\scrA$-module and $\LMod_A(\scrC)$ is only a right $\scrA$-module. However, in our case left and right modules are equivalent because $\scrS$ is symmetric monoidal; in particular, we can harmlessly swap the factors in the formulas $\scrC\otimes\RMod_A(\scrS)$ and $\LMod_A(\scrS)\otimes\scrC$.
    \item In principle, the equivalences in the statement of the categorical Eilenberg-Watts Theorems might not be natural in $\scrC$. It turns out, however, that these equivalences  are in fact  natural. We will prove this in \cref{lemma:eilenbergwattsnatural} below.
    \end{enumerate}

So, given two cocomplete categories $\scrC$ and $\scrD$ we have a chain of equivalences\begin{align*}
{\Map_{\CatVinftycocomtiny}}{\lp\LMod_A(\scrC),\hsp\scrD\rp}&\simeq{\Map_{\CatVinftycocomtiny}}{\lp\scrC\otimes\LMod_A(\scrS),\hsp\scrD\rp}\\&\simeq{\Map_{\CatVinftycocomtiny}}{\lp\scrC,\hsp{\FuninL}{\lp\LMod_A(\scrS),\hsp \scrD\rp}\rp}\\&\simeq{\Map_{\CatVinftycocomtiny}}{\lp\scrC,\hsp\RMod_A(\scrD)\rp},
\end{align*}which are moreover natural in both $\scrC$ and $\scrD$, thanks to \cref{lemma:eilenbergwattsnatural}. It follows that $\LMod_A(-)$ is a left adjoint to $\RMod_A(-)$, hence preserves small colimits.
\end{proof}

\begin{lemma}
\label{lemma:eilenbergwattsnatural}
The equivalences of cocomplete categories$$\FuninL(\LMod_A(\scrS),\hsp\scrC)\simeq\RMod_A(\scrC)$$and$$\scrC\otimes\RMod_A(\scrS)\simeq\RMod_A(\scrC)$$are natural in $\scrC$.
\end{lemma}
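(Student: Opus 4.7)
The plan is to refine the pointwise equivalences of \cite[Theorems 4.8.4.1 and 4.8.4.6]{ha} to equivalences of functors $\CatVinftycocom \to \CatVinftycocom$, by constructing in each case an explicit comparison natural transformation whose pointwise specialisations coincide with Lurie's equivalences. In both cases the strategy is to exploit universal properties that are manifestly functorial in $\scrC$, and then invoke the pointwise equivalence to conclude. No auxiliary results beyond the closed symmetric monoidal structure of $\CatVinftycocom$ and the cited Eilenberg--Watts theorems should be needed; in particular the proof cannot rely on \cref{lemma:modprL}, which this statement is meant to feed into.

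For the equivalence $\scrC \otimes \RMod_A(\scrS) \simeq \RMod_A(\scrC)$ I would argue as follows. Every object of $\CatVinftycocom$ is canonically tensored over $\scrS$, so the assignment $(C, M) \mapsto C \otimes M$, equipped with the right $A$-action transported from $M$ via the $\scrS$-tensoring, defines a bifunctor $\scrC \times \RMod_A(\scrS) \to \RMod_A(\scrC)$ that is cocontinuous in each variable separately. By the universal property of the tensor product in $\CatVinftycocom$, this factors through a cocontinuous functor $\Phi_{\scrC}\colon \scrC \otimes \RMod_A(\scrS) \to \RMod_A(\scrC)$. Naturality in $\scrC$ is built into the construction: a cocontinuous functor $F\colon \scrC \to \scrD$ is automatically $\scrS$-linear, hence commutes with the $\scrS$-tensoring used to define the $A$-action on $C \otimes M$, so it intertwines $\Phi_{\scrC}$ and $\Phi_{\scrD}$. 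The pointwise-equivalence statement of \cite[Theorem 4.8.4.6]{ha} then upgrades $\Phi_{\scrC}$ to a natural equivalence.

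For the equivalence $\FuninL(\LMod_A(\scrS), \scrC) \simeq \RMod_A(\scrC)$ I would construct the comparison map by evaluating cocontinuous functors at the free left module $A \in \LMod_A(\scrS)$: given $F \in \FuninL(\LMod_A(\scrS), \scrC)$, the image $F(A)$ acquires a right $A$-action from the right multiplication of $A$ on itself (which lives inside $\LMod_A(\scrS)$ as an endomorphism of the free module), producing a functor $\Psi_{\scrC}\colon \FuninL(\LMod_A(\scrS), \scrC) \to \RMod_A(\scrC)$. This is manifestly natural in $\scrC$ because both sides transform covariantly by postcomposition with cocontinuous functors, and $\Psi$ commutes with postcomposition by construction. \cite[Theorem 4.8.4.1]{ha} ensures $\Psi_{\scrC}$ is an equivalence for each fixed $\scrC$.

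The main (and only) obstacle is to verify that the comparison functors $\Phi_{\scrC}$ and $\Psi_{\scrC}$ constructed above agree, up to coherent equivalence, with those implicit in Lurie's formulations. Since both candidates are characterised by the same universal property in each variable (namely, intertwining the tautological right $A$-module structure on $A$ itself with the output right $A$-module structure), this reduces to an inspection of the definitions at the level of the generating right $A$-module $A \in \RMod_A(\scrS)$, after which cocontinuity in both variables propagates the agreement to the entire functor.
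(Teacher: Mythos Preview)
Your proposal is correct but takes a genuinely different route from the paper. The paper exploits that $\LMod_A(\scrS)$ is \emph{dualizable} in $\CatVinftycocom$ with dual $\RMod_A(\scrS)$ (\cite[Remark 4.8.4.8]{ha}): since duals and exponentials in a closed symmetric monoidal category can be chosen functorially, this immediately yields a natural equivalence $\FuninL(\LMod_A(\scrS),\scrC)\simeq\scrC\otimes\RMod_A(\scrS)$, reducing both naturality claims to the second one. The paper then verifies the second by constructing the \emph{inverse} map via adjunction from the manifestly natural pairing $\LMod_A(\scrS)\otimes\RMod_A(\scrC)\to\scrC$ given by the product of forgetful functors. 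Your approach instead treats the two equivalences independently, building the comparison functors $\Phi_{\scrC}$ and $\Psi_{\scrC}$ directly from the $\scrS$-tensoring and from evaluation at $A$. Your argument is more hands-on and avoids the dualizability machinery; the paper's is slicker because functoriality of duals and exponentials does the heavy lifting. One caveat on your side: the construction of $\Phi_{\scrC}$ as a bifunctor, and its naturality in $\scrC$, are sketched at the level of objects and morphisms; producing an honest natural transformation of endofunctors of $\CatVinftycocom$ from this description requires exactly the kind of coherence bookkeeping the paper's dualizability argument is designed to sidestep. On the other hand, your $\Psi_{\scrC}$ is literally the map Lurie constructs in \cite[Theorem 4.8.4.1]{ha}, so the ``obstacle'' you flag in your final paragraph is not really one for that half.
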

\begin{proof}
  First, suppose that $\scrC=\scrS$ is the category of spaces. In this case, \cite[Remark $4.8.4.8$]{ha} guarantees that the first equivalence exhibits $\LMod_A(\scrS)$ as a \textit{left dual} to $\RMod_A(\scrS)$, hence as a \textit{weak left dual} in the sense of \cite[Remark $5.2.5.6$]{ha}. The choice of a weak left dual in a closed symmetric monoidal category can be made functorially (\cite[Remark $5.2.5.10$]{ha}). Moreover, in any closed symmetric monoidal category, fixing any left dualizable object $X$ and an arbitrary object $Y$, the tensor product $Y\otimes X^{\vee}$ serves as an exponential of $Y$ by $X$ (\cite[Lemma $4.6.1.5$]{ha}); again, exponentials can be chosen functorially in $Y$ (\cite[Remark $4.6.1.3$]{ha}). Combining this argument with \cite[Lemma $4.6.1.6$]{ha}, we deduce the existence of a string of equivalences\begin{align*}
     \FuninL(\LMod_A(\scrS),\hsp\scrC)&\simeq \scrC\otimes\FuninL(\LMod_A(\scrS),\hsp\scrS)\\&\simeq\scrC\otimes\RMod_A(\scrS),
    \end{align*}which are all natural in $\scrC$. So, proving the naturality in $\scrC$ of the equivalence
    \[\scrC\otimes\RMod_A(\scrS)\simeq\RMod_A(\scrC)\]will yield that also the equivalence\[\FuninL(\LMod_A(\scrS),\hsp\scrC)\simeq\RMod_A(\scrC)\]is natural in $\scrC$. But under the (natural) equivalence $\scrC\otimes\RMod_A(\scrS)\simeq\FuninL(\LMod_A(\scrS),\hsp\scrC)$, the functor
    \[
\RMod_A(\scrC)\longrightarrow\scrC\otimes\RMod_A(\scrS)\simeq\FuninL(\LMod_A(\scrS),\hsp\scrC)
    \]
    is readily seen to be the functor obtained via adjunction from the functor
    \[
\LMod_A(\scrS)\otimes\RMod_A(\scrC)\longrightarrow\scrS\otimes\scrC\simeq\scrC
    \]
    given by the tensor product of the obvious forgetful functors. Since forgetting the action of a monoid $A$ in $\scrS$ is functorial with respect to colimit-preserving functors (which are obviously $\scrS$-linear), we can conclude in virtue of the naturality of the correspondence between adjoint morphisms.


\end{proof}
 \begin{proposition}
 [Monodromy equivalence, second take]
 \label{monodromyeq}
Let $X$ be a connected space, and let $\scrC$ be a cocomplete (not necessarily presentable) category. Then there exists an equivalence of categories$$\LocSys(X;\scrC)\simeq \LMod_{\Omega_*X}{\lp\scrC\rp}.$$
\end{proposition}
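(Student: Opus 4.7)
The plan is to extend \cref{lemma:koszulduality} from presentable categories to all cocomplete ones through a compact generation argument, as signalled just before \cref{prop:stefanichcompgen}. By that proposition, every cocomplete category $\scrC$ may be written as a $\kappa_0$-filtered colimit $\scrC\simeq\colim_{\alpha}\scrC_{\alpha}$ inside $\CatVinftycocom$ with each $\scrC_{\alpha}\in\PrLU$. If both endofunctors $\LocSys(X;-)$ and $\LMod_{\Omega_*X}(-)$ of $\CatVinftycocom$ preserve such colimits, and agree on the full subcategory $\PrLU$, then the equivalence extends automatically:
\[
\LocSys(X;\scrC)\simeq\colim_{\alpha}\LocSys(X;\scrC_{\alpha})\simeq\colim_{\alpha}\LMod_{\Omega_*X}(\scrC_{\alpha})\simeq\LMod_{\Omega_*X}(\scrC).
\]

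For the colimit preservation I would proceed as follows. The right-hand side is handled by the immediately preceding \cref{lemma:modprL}, which exhibits $\LMod_{\Omega_*X}$ as a left adjoint in $\CatVinftycocom$, with right adjoint $\RMod_{\Omega_*X}$. For the left-hand side, I would exploit the fact that, since $X$ is a space, the functor $\Fun(X,-)$ admits a description as a tensor construction: using the Eilenberg--Watts equivalences recalled in the proof of \cref{lemma:modprL}, one has $\Fun(X,\scrC)\simeq\Fun(X,\scrS)\otimes\scrC$ inside $\CatVinftycocom$, where $\otimes$ denotes Lurie's tensor product of cocomplete categories. Tensoring with a fixed object of a closed symmetric monoidal category preserves colimits separately in each variable, so $\LocSys(X;-)$ preserves colimits. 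Agreement on $\PrLU$ is exactly the content of \cref{lemma:koszulduality}.

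The main technical point I expect to grapple with is \emph{naturality}: to transport the equivalence through the colimit we must upgrade \cref{lemma:koszulduality} from a collection of equivalences, one for each presentable $\scrC$, into an equivalence of functors $\PrLU\to\CatVinftycocom$. The cleanest way around this is to reduce both sides to tensor constructions over $\scrS$. Combining the identification $\Fun(X,\scrC)\simeq\Fun(X,\scrS)\otimes\scrC$ above with the second Eilenberg--Watts equivalence $\LMod_{\Omega_*X}(\scrS)\otimes\scrC\simeq\LMod_{\Omega_*X}(\scrC)$ from the proof of \cref{lemma:modprL}, whose naturality in $\scrC$ is precisely the content of \cref{lemma:eilenbergwattsnatural}, the whole statement reduces to the presentable instance $\Fun(X,\scrS)\simeq\LMod_{\Omega_*X}(\scrS)$, which is \cref{lemma:koszulduality} applied to $\scrC=\scrS$. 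Transporting this single equivalence along the colimit-preserving functor $-\otimes\scrC$ produces the desired natural equivalence, and the compact generation argument completes the extension to all of $\CatVinftycocom$.
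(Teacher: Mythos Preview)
Your argument is correct and follows the same overall strategy as the paper: write $\scrC$ as a $\kappa_0$-filtered colimit of presentable categories via \cref{prop:stefanichcompgen}, use \cref{lemma:modprL} to commute $\LMod_{\Omega_*X}(-)$ past the colimit, and invoke \cref{lemma:koszulduality} term-wise.

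The one genuine difference is in how you justify that $\LocSys(X;-)=\Fun(X,-)$ preserves the colimit. You rewrite $\Fun(X,\scrC)$ as $\Fun(X,\scrS)\otimes\scrC$ and appeal to the closed symmetric monoidal structure on $\CatVinftycocom$. The paper instead argues directly that $X$, being a small space, is $\kappa_0$-compact as an object of $\CatVinfty$ (by \cite[Proposition~5.4.1.2]{htt}), so $\Fun(X,-)$ automatically commutes with $\kappa_0$-filtered colimits. The paper's route is shorter and avoids having to verify the tensor identity $\Fun(X,\scrC)\simeq\Fun(X,\scrS)\otimes\scrC$ in $\CatVinftycocom$ (which, while true, is not literally among the Eilenberg--Watts statements recalled in \cref{lemma:modprL}; those concern module categories, and you are implicitly using the further identification $\Fun(X,\scrS)\simeq\LMod_{\Omega_*X}(\scrS)$ to get there). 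Your approach has the advantage that it makes the naturality in $\scrC$ manifest by reducing everything to tensoring a single equivalence along $-\otimes\scrC$; the paper is silent on naturality at this point and only takes it up later in \cref{prop:koszulisnatural}.
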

\begin{proof}
When $\scrA\coloneqq\scrS$, \cref{prop:stefanichcompgen} guarantees that $\CatVinftycocom$ is $\kappa_0$-compactly generated under large colimits by presentable categories, where $\kappa_0$ is the smallest large cardinal for our theory. So, for any cocomplete category $\scrC$ we can choose a presentation as a large colimit of presentable categories $\scrC_{i}$. Using \cref{lemma:modprL} and the fact that small spaces are compact with respect to large colimits in virtue of \cite[Proposition 5.4.1.2]{htt}, we obtain that$$\LocSys(X;\scrC)\simeq\colim_i \LocSys\lp X;\scrC_i\rp\simeq\colim_i\LMod_{\Omega_*X}(\scrC_i)\simeq\LMod_{\Omega_*X}(\scrC)$$
\end{proof}
\begin{corollary}
	\label{cor:koszulpresentable}
Let  $X$ be a connected space. Then there is an equivalence of categories
$$\LocSysCat(X)\simeq \LMod_{\Omega_*X}{\lp\PrLU\rp}.$$
More generally, if $\scrA$ is a presentably symmetric monoidal category, there is an equivalence of categories
$$\LocSysCat(X;\scrA)\simeq \LMod_{\Omega_*X}{\lp\LinAPrLU\rp}.$$
\end{corollary}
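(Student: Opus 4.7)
The plan is to deduce this corollary as a direct specialization of \cref{monodromyeq}. The proposition already establishes the monodromy equivalence $\LocSys(X;\scrC) \simeq \LMod_{\Omega_*X}(\scrC)$ for any cocomplete (not necessarily presentable) category $\scrC$, so the only remaining work is to recognize that $\PrLU$ and $\LinAPrLU$ fit into this framework and to unwind the definitions.

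First, I would recall the notation introduced earlier, namely $\LocSysCat(X) \coloneqq \LocSys(X;\PrLU)$ and $\LocSysCat(X;\scrA) \coloneqq \LocSys(X;\LinAPrLU)$. Then I would verify that $\PrLU$ is a large cocomplete category, i.e.\ it admits all small colimits and sits inside $\CatVinftycocom$; this is explicitly invoked in \cref{prop:stefanichcompgen} and is moreover the reason cocompleteness (rather than presentability) was the natural hypothesis to axiomatize in \cref{monodromyeq}. Applying \cref{monodromyeq} with $\scrC = \PrLU$ then yields the first equivalence
\[
\LocSysCat(X) = \LocSys(X;\PrLU) \simeq \LMod_{\Omega_*X}(\PrLU).
\]

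For the second equivalence, I would check analogously that $\LinAPrLU$ is a (large) cocomplete category. Since $\scrA$ is presentably symmetric monoidal, it is in particular an $\Einf$-algebra in $\PrLU$, so via the identification $\LinAPrLU \simeq \Mod_{\scrA}(\PrLU)$ recalled in \eqref{onamelin}, the cocompleteness of $\LinAPrLU$ follows from the fact that categories of modules over an algebra in a cocompletely symmetric monoidal category are themselves cocomplete (the colimits being created by the forgetful functor to $\PrLU$). Once this is in place, another application of \cref{monodromyeq}, now with $\scrC = \LinAPrLU$, gives
\[
\LocSysCat(X;\scrA) = \LocSys(X;\LinAPrLU) \simeq \LMod_{\Omega_*X}(\LinAPrLU),
\]
which is the second assertion.

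The main (only) conceptual work has already been done in \cref{monodromyeq}, whose proof leverages Stefanich's \cref{prop:stefanichcompgen} together with the ambidexterity of $\LMod_A$ on $\CatVinftycocom$ established in \cref{lemma:modprL}. So the proof of the corollary is essentially bookkeeping: the sole point requiring any care is confirming that $\LinAPrLU$ really does live in $\CatVinftycocom$ (rather than being merely a closed sub-$2$-category of $\PrLU$), for which the $\Einf$-algebra structure on $\scrA$ and the general theory of modules in a cocompletely monoidal category suffice.
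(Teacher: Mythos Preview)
Your proposal is correct and follows essentially the same approach as the paper: both reduce the corollary to a direct application of \cref{monodromyeq} after noting that $\LinAPrLU$ is cocomplete. The only cosmetic difference is that the paper observes the first statement is the special case $\scrA=\scrS$ of the second, whereas you treat the two cases in parallel.
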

\begin{proof}
Note that the second part of the statement specializes to the first when we take $\scrA\coloneqq\scrS$ to be the category of spaces. As $\LinAPrLU$ is cocomplete, the statement follows immediately  from \cref{monodromyeq}.
 \end{proof}

 \begin{parag}
	\label{parag:PrLfiltered}
	In fact, it is possible to prove \cref{cor:koszulpresentable} directly  without appealing to Stefanich's \cref{prop:stefanichcompgen}. For semplicity we shall  focus on the case where the category of coefficients in $\PrLU$. The point is that we can explicitly write $\PrLU$ as a colimit of presentable categories. For completeness, let us sketch the argument.

The first ingredient is given by \cite[Lemmas $5.3.2.9$ and $5.3.2.11$]{ha}, which we summarize here for the convenience of the reader. First, if we fix a regular cardinal $\kappa$, the large category $\PrLk$ of  $\kappa$-compactly generated presentable categories, together with left exact functors which preserve $\kappa$-compact objects, \textit{is} presentable and admits small colimits (which agree with small colimits in $\PrLU$). Moreover, the tensor product of two small presentable and $\kappa$-compactly generated categories $\scrC$ and $\scrD$ is again $\kappa$-compactly generated: indeed, it is generated by $\kappa$-filtered colimits of objects of the form $C\otimes D$, where $C$ and $D$ are $\kappa$-compact generators of $\scrC$ and $\scrD$ respectively. The restriction of the tensor product $\otimes\colon \PrLk\times\PrLk\to \PrLk$ commutes again with small colimits, hence \cite[Remark $4.2.1.34$]{ha} guarantees that $\PrLk$ is enriched over itself.

Note that $\PrLU$ may be regarded as the filtered colimit inside $\CatVinftycocom$ of the categories $\PrLk$'s, where the colimit ranges all over regular cardinals $\kappa$ which are $\scrV$-small. A priori, this is only a colimit in $\CatVinfty$; however, the discussion above guarantees that all the functors making up this diagram are cocontinuous functors between cocomplete (actually presentable) categories, hence the diagram lies in $\CatVinftycocom$. Since the inclusion $$\CatVinftycocom\subseteq\CatVinfty$$ preserves filtered colimits (\cite[Proposition $5.5.7.11$]{htt}), we can regard $\PrLU$ as the colimit of $\PrLk$'s in $\CatVinftycocom$, as we claimed. Then arguing exactly as in the proof of \cref{monodromyeq} we conclude that there is an equivalence
$$\LocSysCat(X)\simeq \LMod_{\Omega_*X}{\lp\PrLU\rp}.$$
 \end{parag}
\subsection{Naturality of the correspondence}
The statements of \cref{lemma:koszulduality},  \cref{monodromyeq} and \cref{cor:koszulpresentable} are not obviously  \textit{natural} in $X$, for $X$ a connected and pointed space.  In fact even making sense of naturality in this context requires some care. Indeed, the functor $$\LocSys(-;\scrC)\coloneqq{\Fun}{\lp-,\hsp\scrC\rp}\colon\scrS\longrightarrow\PrLU$$does not depend  on a choice of a pointing. On the other hand, the functor $\LMod_{\Omega_*(-)}(\scrC)$ makes sense only for pointed spaces. In particular, the domain of these two functors is, \textit{a priori}, quite different. 
We will explain how to get around these issues, and upgrade the monodromy equivalence of \cref{lemma:koszulduality} to a natural equivalence of functors. The main goal of this section is to prove the following.

\begin{proposition}
	\label{prop:koszulisnatural}
	Let $\scrS^{\geqslant1}_*$ be the category of connected pointed spaces. Let $\scrC$ be a cocomplete category. Then there is a natural equivalence of functors
	$$\LMod_{\Omega_*(-)}{\lp\scrC\rp}\simeq 	\restr{\LocSys(-;\scrC)}{\scrS^{\geqslant1}_*}\colon\scrS^{\geqslant1}_*\longrightarrow\CatVinftycocom$$  More generally, if $\scrA$ is a presentably symmetric monoidal category and  $\scrC$ is a cocomplete category which is cocompletely tensored over $\scrA$ (i.e., it is an $\scrA$-module in $\CatVinftycocom$), 
then there is a natural equivalence of functors $$\LMod_{\Omega_*(-)}{\lp\scrC\rp}\simeq 	\restr{\LocSys(-;\scrC)}{\scrS^{\geqslant1}_*}\colon\scrS^{\geqslant1}_*\longrightarrow \mathrm{Lin}_{\scrA}\CatVinftycocom.$$  
\end{proposition}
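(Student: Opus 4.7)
The strategy is to run the proof of \cref{monodromyeq} ``with parameters'' in $X\in\scrS^{\geqslant 1}_*$, making each step manifestly functorial. For any pointed connected space $(X,p_X)$, the basepoint inclusion $p_X\colon *\to X$ induces an adjunction
$$p_{X,!}\colon\scrC\simeq\Fun(*,\scrC)\;\rightleftarrows\;\Fun(X,\scrC)\colon p_X^{*},$$
whose right adjoint is evaluation at the basepoint and whose left adjoint is left Kan extension. Since $X$ is connected, $p_X^{*}$ is conservative, and it preserves all colimits (which are computed pointwise in $\Fun(X,\scrC)$), so by Barr--Beck--Lurie it is monadic and $\Fun(X,\scrC)\simeq\LMod_{T_X}(\scrC)$ where $T_X=p_X^{*}p_{X,!}$. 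A direct computation of the pointwise left Kan extension identifies the monad $T_X$ with the endofunctor $\Omega_*X\otimes(-)$ equipped with the $\Ebb_1$-algebra structure induced by the Pontryagin product on $\Omega_*X$, and hence $\LMod_{T_X}(\scrC)\simeq\LMod_{\Omega_*X}(\scrC)$.

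To upgrade this pointwise equivalence to a natural one, observe that for a pointed map $f\colon X\to Y$ in $\scrS^{\geqslant 1}_*$ the identity $f\circ p_X=p_Y$ gives $p_Y^{*}=p_X^{*}\circ f^{*}$, hence $p_{Y,!}=f_!\circ p_{X,!}$ by passing to left adjoints. Whiskering the unit $\eta_f\colon\mathrm{id}\Rightarrow f^{*}f_!$ between $p_{X,!}$ and $p_X^{*}$ yields a canonical morphism $T_X\to T_Y$ in $\Alg_{\Ebb_1}(\End(\scrC))$, which under the identification above corresponds exactly to the map $\Omega_*f\colon\Omega_*X\to\Omega_*Y$. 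In this way $X\mapsto T_X$ refines to a functor $\scrS^{\geqslant 1}_*\to\Alg_{\Ebb_1}(\End(\scrC))$, and a parametric version of Barr--Beck--Lurie monadicity then delivers the desired natural equivalence of functors $\scrS^{\geqslant 1}_*\to\CatVinftycocom$.

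The principal technical hurdle is formalising this ``parametric Barr--Beck'' step, since the version of the monadicity theorem stated in \cite[Theorem $4.7.3.5$]{ha} is pointwise. A concrete way to handle it is to construct a cocartesian fibration $\mathcal{E}\to\scrS^{\geqslant 1}_*$ whose fibre over $X$ is $\Fun(X,\scrC)$ (namely, the unstraightening of $\LocSys(-;\scrC)$) together with a second one whose fibre is $\LMod_{\Omega_*X}(\scrC)$ (built from the $T_{(-)}$-module structure), and then to check that the pointwise monadic comparison functors assemble into an equivalence of these cocartesian fibrations. For the $\scrA$-linear refinement, all the functors involved ($p_X^{*}$, $p_{X,!}$, $f_!$) are automatically $\scrA$-linear when $\scrC$ is $\scrA$-linear, since each of them commutes with all colimits and hence with the $\scrA$-action; the monad $T_X$ inherits an $\scrA$-linear structure, and the natural equivalence upgrades at once to one of functors $\scrS^{\geqslant 1}_*\to\operatorname{Lin}_{\scrA}\CatVinftycocom$.
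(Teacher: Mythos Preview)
Your approach is genuinely different from the paper's and, in spirit, more direct. The paper does not use Barr--Beck monadicity at all; instead it works through a chain of equivalences
\[
\Fun(X,\scrC)\simeq\FuninL(\Fun(X^{\op},\scrS),\scrC)\simeq\FuninL(\scrS_{/X},\scrC)\simeq\FuninL(\LMod_{\Omega_*X}(\scrS),\scrC)\simeq\RMod_{\Omega_*X}(\scrC)\simeq\LMod_{\Omega_*X}(\scrC)
\]
and proves each step natural in $X$ separately (Lemmas \ref{lemma:equivalencepart1}--\ref{lemma:equivalencepart4}), invoking the naturality of Yoneda from \cite{yonedaisnatural3}, the naturality of straightening/unstraightening, \cite[Remark $5.2.6.28$]{ha}, and a hands-on analysis of the Eilenberg--Watts equivalence. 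Moreover, the paper first establishes naturality only for \emph{presentable} $\scrC$ via this chain and then bootstraps to arbitrary cocomplete $\scrC$ using Stefanich's compact-generation result (\cref{prop:stefanichcompgen}), whereas you work directly in the cocomplete setting throughout. What your route buys is a single conceptual picture and no dependence on \cref{prop:stefanichcompgen} or on external naturality-of-Yoneda statements; what the paper's route buys is that every step is already covered by a citable reference, so nothing has to be built from scratch.

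That said, two points in your proposal remain genuine obligations rather than completed arguments. First, identifying $T_X$ with $\Omega_*X\otimes(-)$ \emph{as an $\Ebb_1$-algebra in $\End(\scrC)$}, not merely as an underlying endofunctor, requires matching the monad multiplication $T_X\circ T_X\Rightarrow T_X$ with the Pontryagin product coherently; this is not hard but deserves an explicit sentence. Second, and more substantially, the ``parametric Barr--Beck'' step you flag is exactly where the work lives: producing $X\mapsto T_X$ as an honest functor $\scrS^{\geqslant 1}_*\to\Alg_{\Ebb_1}(\End(\scrC))$ and then promoting the fibrewise monadic comparison to an equivalence of cocartesian fibrations over $\scrS^{\geqslant 1}_*$ is not delivered by \cite[Theorem $4.7.3.5$]{ha} as stated. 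Your strategy is correct, but to turn this into a proof rather than a sketch you would need either to cite a relative/functorial monadicity statement or to carry out the cocartesian-fibration comparison explicitly. The paper sidesteps this entirely by never invoking monadicity.
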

\cref{prop:koszulisnatural} will play an important role in the later sections of the article, since it will be a stepping stone in proving that the Day convolution monoidal structure on the category of functors over a connected topological monoid $G$ naturally corresponds to the relative tensor product monoidal structure over the $\Ebb_2$-monoid $\Omega_*G$ (\cref{prop:koszulismonoidal}).

\begin{parag}
The second part of \cref{prop:koszulisnatural} specializes to the first when we set  $\scrA\coloneqq\scrS$. For ease of exposition, we will limit ourselves to prove  \cref{prop:koszulisnatural} in this latter case; the general case is proved in the same way.

The proof of \cref{lemma:koszulduality} given in   \cite[Lemma $3.9$]{koszulduality} depends on a sequence of equivalences of categories. We will show that each of them is natural in $X$ in a series of Lemmas (\cref{lemma:equivalencepart1}, \cref{lemma:equivalencepart2},  \cref{lemma:equivalencepart3} and \cref{lemma:equivalencepart4}). This will show that \cref{prop:koszulisnatural} holds when $\scrC$ is presentable. We will then conclude that the statement holds  for an  arbitrary cocomplete category  $\scrC$ using \cref{prop:stefanichcompgen}.

Let $\scrC$ be a presentable category. 
Consider  the category $\LocSys(X;\scrC)$ of $\scrC$-valued local systems over $X$. The proof of \cref{lemma:koszulduality} depends on the following chain of equivalences: 
\begin{align}
\label{equivalences:chain}
\begin{split}
\LocSys(X;\scrC) &\coloneqq \Fun(X,\hsp\scrC)
\\& \stackrel{(a)}\simeq\FuninL{\lp \Fun(X^{\op},\hsp\scrS),\hsp\scrC\rp}
\\& \stackrel{(b)}={\FuninL}{\lp\LocSys{\lp X^{\op}\rp},\hsp\scrC\rp}\\& \stackrel{(c)}\simeq{\FuninL}{\lp\scrS_{/X},\hsp\scrC\rp}
\\&\stackrel{(d)}\simeq{\FuninL}{\lp\LMod_{\Omega_*X}{\lp\scrS\rp},\hsp\scrC\rp}
\\&\stackrel{(e)}\simeq \RMod_{\Omega_*X}(\scrC)
\\&\stackrel{(f)}\simeq \LMod_{\Omega_*X}(\scrC).
\end{split}
\end{align}
Next, let us explain why each of these equivalences holds. 
\begin{itemize}
\item  Equivalence $(a)$ follows from the Yoneda Lemma.
\item  Equality $(b)$ is definitional. 
\item Equivalence $(c)$ follows from the straightening/unstraightening construction of \cite{htt}, which gives precisely 
\begin{align}
\label{equivalence:straightening}
\LocSys(X^{\op})\coloneqq\Fun(X^{\op},\hsp\scrS)\simeq\scrS_{/X}.
\end{align}
\item Equivalence $(d)$ follows from the fact that when $X$ is pointed and connected, \cite[Remark $5.2.6.28$]{ha} yields an equivalence
\begin{align}
\scrS_{/X}\simeq\LMod_{\Omega_*X}(\scrS).
\end{align} 
\item Equivalence $(e)$ follows from the categorical Eilenberg-Watts theorem (\cite[Theorem $4.8.4.1$]{ha}).
\item Equivalence $(f)$ depends on the fact that $\Omega_*X$ is   a \textit{grouplike} topological monoid (\cite[Definition $5.2.6.2$]{ha}). In particular, it can be regarded as a group object in spaces (\cite[Remark $5.2.6.5$]{ha}), and the antipode map $\iota\colon\Omega_*X\xrightarrow{\simeq} \Omega_*X$ yields an equivalence between $\Omega_*X$ and $\Omega_*X^{\mathrm{rev}},$ where $\Omega_*X^{\mathrm{rev}}$ is the same underlying space as $\Omega_*X$ endowed with the reverse monoid structure (\cite[Remark $4.1.1.7$]{ha}). Hence, left and right modules over $\Omega_*X$ are the same because of \cite[Remark $4.6.3.2$]{ha}. \end{itemize}
\end{parag}
Let us add some comments on this last point. In \cite{koszulduality}, $\LMod_{\Omega_*X}{\lp\scrS\rp}$ and $\RMod_{\Omega_*X}{\lp\scrS\rp}$ are in fact used interchangeably. The assignment $$X\mapsto X^{\mathrm{rev}}$$ is a functor induced by the canonical involution of the operad $\mathrm{Assoc}\simeq\Ebb_1$ i.e., the equivalence between left and right modules over $\Omega_*X$ is \textit{natural} in $X$. Thus,  as in \cite{koszulduality}, we shall often abuse notations and blur the difference between $\LMod_{\Omega_*X}{\lp \scrS\rp}$ and $\RMod_{\Omega_*X}{\lp\scrS\rp}$. 

In order to establish \cref{prop:koszulisnatural}, we will analyze in turn each of the equivalences in the chain \eqref{equivalences:chain}, and show that that they are natural in 
$X$. By our previous discussion, the naturality of $(b)$ and $(f)$ is clear, so we will focus on the remaining four equivalences.

We start with equivalence $\eqref{equivalences:chain}.(a)$. The naturality of the Yoneda Lemma is well-known in  ordinary category theory. In the $\infinity$-categorical setting   it  was recently established in \cite{yonedaisnatural3}.
\begin{theorem}{\cite[Theorem $3.6$]{yonedaisnatural3}}
	\label{thm:yonedanatural}
Let $\scrA$ be a presentably monoidal category, and let $\operatorname{Lin}_{\scrA}\CatUinfty$ (resp. $\operatorname{Lin}_{\scrA}\CatVinfty$) be the category of small (resp. large) $\scrA$-enriched categories. The inclusion$$\LinAPrLU\longhookrightarrow\operatorname{Lin}_{\scrA}\CatVinfty$$admits a left adjoint relative to the inclusion $\operatorname{Lin}_{\scrA}\CatUinfty\subseteq\operatorname{Lin}_{\scrA}\CatVinfty$, given by taking the category of presheaves with values in $\scrA$. The partial unit is provided by the $\scrA$-enriched Yoneda embedding $\yo_{\scrA}\colon \scrC\to{\Fun}{\lp\scrC^{\op},\hsp\scrA\rp}$.
\end{theorem}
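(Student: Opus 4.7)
The plan is to realize this statement as an enriched upgrade of Lurie's universal property of the presheaf category (\cite[Theorem $5.1.5.6$]{htt}), extending the classical adjunction $\mathcal{P}\colon\CatUinfty\rightleftarrows\PrLU$ to the setting of $\scrA$-enriched structures.

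First, I would construct the functor sending a small $\scrA$-enriched category $\scrC$ to its enriched presheaf category $\Fun(\scrC^{\op},\hsp\scrA)$, regarded as a presentable $\scrA$-enriched category (equivalently, as a presentable $\scrA$-module via \eqref{onamelin}). The $\scrA$-action on the presheaf category is induced pointwise from the self-action of $\scrA$, while the functoriality in $\scrC$ comes from pushforward along enriched functors. Doing this coherently at the level of $\infinity$-categories is the main source of bookkeeping, and is best handled on the module side, using Heine's equivalence $\LinAPrLU\simeq \Mod_{\scrA}(\PrLU)$ to reduce to the unenriched presheaf construction of \cite{htt} equipped with a compatible $\scrA$-action.

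Second, I would produce the enriched Yoneda embedding $\yo_{\scrA}\colon\scrC\to \Fun(\scrC^{\op},\hsp\scrA)$ and verify that it exhibits the target as the universal recipient of an enriched functor from $\scrC$ into a presentable $\scrA$-enriched category. Concretely, for any $\scrD\in\LinAPrLU$, precomposition with $\yo_{\scrA}$ should induce an equivalence between the space of $\scrA$-enriched functors $\scrC\to\scrD$ and the space of $\scrA$-linear cocontinuous functors $\Fun(\scrC^{\op},\hsp\scrA)\to\scrD$, with the extension realized as the enriched left Kan extension along $\yo_{\scrA}$.

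The main obstacle is assembling these object-wise equivalences into a coherent adjunction of $\infinity$-categories, rather than a mere family of equivalences indexed by $\scrC$: this coherence is exactly what the naturality statement encodes. I would address it using the framework of Gepner--Haugseng for $\scrA$-enriched $\infinity$-categories together with Heine's identification (\cite[Theorem $1.2$]{heineenrichedaction}), reducing the naturality claim to the already established naturality of Lurie's unenriched presheaf adjunction, upgraded along the $\scrA$-module structure. The relative nature of the adjunction (formulated over $\operatorname{Lin}_{\scrA}\CatUinfty\subseteq\operatorname{Lin}_{\scrA}\CatVinfty$) is forced upon us because the enriched presheaf category of a \emph{large} $\scrA$-enriched category generally falls outside $\LinAPrLU$ for purely size-theoretic reasons, so that no absolute left adjoint to the forgetful functor can exist.
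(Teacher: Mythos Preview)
The paper does not provide its own proof of this statement: it is quoted verbatim as \cite[Theorem $3.6$]{yonedaisnatural3} and used as a black box, with the sentence ``\cref{thm:yonedanatural} immediately implies the following'' leading directly into \cref{lemma:equivalencepart1}. There is therefore nothing in the paper to compare your proposal against.

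As for the proposal itself, your outline is a plausible strategy for deducing the enriched relative adjunction from the unenriched one via Heine's equivalence, but it is not the argument of the cited reference. Ramzi's proof in \cite{yonedaisnatural3} works directly in the enriched setting (using Hinich's and Heine's frameworks for enriched presheaves and the enriched Yoneda lemma) and establishes the relative adjunction by producing the required unit transformation and verifying the universal property via an explicit comparison of mapping spaces, rather than by transporting Lurie's unenriched adjunction along the module identification. Your approach would face the difficulty that the unenriched presheaf adjunction $\scrC\mapsto\Fun(\scrC^{\op},\scrS)$ does not obviously interact well with the $\scrA$-module structure unless one already has an enriched Yoneda embedding to compare with, which is precisely what is being constructed.
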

\cref{thm:yonedanatural} immediately implies the following.
\begin{lemma}[Equivalence $\eqref{equivalences:chain}.(a)$ is natural]
\label{lemma:equivalencepart1}
There exists a natural equivalence of functors$${\FuninL}{\lp \LocSys{\lp(-)^{\op}\rp};\hsp\scrC\rp}\simeq\LocSys{\lp(-)^{\op};\scrC\rp}\colon\scrS\subseteq\CatUinfty\longrightarrow\PrLU.$$
\end{lemma}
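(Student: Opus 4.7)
The plan is to invoke \cref{thm:yonedanatural} in the special case $\scrA = \scrS$. That theorem provides a left adjoint $L\colon \CatUinfty \to \PrLU$, relative to the inclusion $\CatUinfty \subseteq \CatVinfty$, sending a small category $Y$ to its presheaf category $\Fun(Y^{\op}, \scrS)$; the partial unit at $Y$ is the Yoneda embedding $\yo_Y\colon Y \to \Fun(Y^{\op}, \scrS)$. On underlying $\infinity$-categories this packages as an honest adjunction, and so in particular yields a natural equivalence of mapping spaces
\[
\Map_{\PrLU}{\lp L(Y), \scrC\rp} \simeq \Map_{\CatUinfty}{\lp Y, \scrC\rp}
\]
in $Y \in \CatUinfty$ and $\scrC \in \PrLU$.

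The key step is upgrading this to an equivalence of internal hom objects in $\CatUinfty$. Both $\CatUinfty$ and $\PrLU$ are naturally enriched over $\CatUinfty$, with internal homs $\Fun(-,-)$ and $\FuninL(-,-)$ respectively; moreover, the forgetful functor $\PrLU \to \CatVinfty$ preserves these internal homs, since $\FuninL(\scrC, \scrD)$ is by definition the full subcategory of $\Fun(\scrC, \scrD)$ spanned by colimit-preserving functors (cf.\ \cite[Remark $4.8.1.17$]{ha}). The relative adjunction of \cref{thm:yonedanatural} is therefore enriched with respect to this Cartesian enrichment, and consequently upgrades to a natural equivalence
\[
\FuninL{\lp L(Y), \scrC\rp} \simeq \Fun{\lp Y, \scrC\rp}
\]
in $\CatUinfty$, natural in both $Y \in \CatUinfty$ and $\scrC \in \PrLU$.

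To conclude, I would restrict to $Y = X^{\op}$ for $X \in \scrS \subseteq \CatUinfty$. Since $(-)^{\op}$ is an autoequivalence of both $\scrS$ and $\CatUinfty$, the resulting equivalence
\[
\FuninL{\lp\LocSys{\lp X^{\op}\rp}, \scrC\rp} = \FuninL{\lp\Fun(X^{\op}, \scrS), \scrC\rp} \simeq \Fun(X^{\op}, \scrC) = \LocSys{\lp X^{\op};\scrC\rp}
\]
is natural in $X \in \scrS$, which is exactly the claim. The main subtlety, and the only non-tautological step, is the passage from the mapping-space adjunction to the internal-hom equivalence; this is standard enriched-category bookkeeping once one knows that the forgetful $\PrLU \to \CatVinfty$ preserves the relevant internal mapping objects.
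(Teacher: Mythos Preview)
Your approach is correct and is essentially the same as the paper's: the paper simply states that \cref{thm:yonedanatural} ``immediately implies'' the lemma and gives no further argument, while you spell out how the relative adjunction upgrades from mapping spaces to internal homs. One minor slip: with $Y=X^{\op}$ you have $L(Y)=\Fun((X^{\op})^{\op},\scrS)=\Fun(X,\scrS)$, not $\Fun(X^{\op},\scrS)$; this is harmless here since $X$ is a groupoid, so $X\simeq X^{\op}$ naturally.
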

 \begin{lemma}[Equivalence $\eqref{equivalences:chain}.(c)$ is natural]
\label{lemma:equivalencepart2}
There is a natural equivalence of functors
$$
 {\FuninL}{\lp\LocSys{\lp (-)^{\op}\rp},\hsp\scrC\rp} \simeq {\FuninL}{\lp\scrS_{/(-)},\hsp\scrC\rp} \colon\scrS \longrightarrow\PrLU.
 $$
\end{lemma}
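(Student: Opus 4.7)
The plan is to deduce the desired equivalence by first establishing a natural equivalence of functors $\scrS \to \PrLU$,
$$
\scrS_{/(-)} \simeq \LocSys{\lp(-)^{\op}\rp},
$$
and then postcomposing with the contravariant cocontinuous functor $\FuninL(-,\hsp\scrC)$. Since postcomposition with a contravariant cocontinuous functor is itself functorial, the resulting natural equivalence will directly yield the statement.

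To produce the enhanced equivalence at the level of $\PrLU$-valued functors, I would first lift both assignments $X \mapsto \scrS_{/X}$ and $X \mapsto {\Fun}{\lp X^{\op}, \hsp \scrS \rp}$ to genuine colimit-preserving functors $\scrS \to \PrLU$. For the slice construction, one uses the identification $\scrS_{/X} \simeq \scrS \otimes X$, arising from the canonical tensoring of $\PrLU$ over $\scrS$ (here $X$ is regarded as an $\infty$-groupoid); this assignment is manifestly functorial and preserves colimits in $X$, because the tensoring of any presentable category by spaces does so in each variable. For the presheaf construction, one appeals to the universal property of ${\Fun}{\lp (-)^{\op}, \scrS\rp}$ as the free cocompletion, which extends functorially via left Kan extension along the Yoneda embedding (see \cite[Theorem $5.1.5.6$]{htt}), and which is moreover cocontinuous in its input since it is a left adjoint.

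Next, observe that both functors send the point $* \in \scrS$ to $\scrS \in \PrLU$. The universal property of $\scrS$ as the free cocomplete $\infty$-category on a single generator -- namely, that evaluation at the point induces an equivalence $\FuninL(\scrS, \scrD) \simeq \scrD$ for every cocomplete $\scrD$, cf. \cite[Theorem $5.1.5.6$]{htt} -- applied to $\scrD = \PrLU$ then forces the two cocontinuous functors to coincide up to a canonical equivalence in $\FuninL(\scrS, \PrLU)$. Postcomposing with $\FuninL(-,\hsp\scrC)$ gives the stated natural equivalence.

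The hard part is not the concluding universal-property argument, which is essentially automatic, but rather the rigorous construction of both assignments as morphisms in the $\infty$-category $\Fun(\scrS, \PrLU)$, endowed with all the higher coherence data needed to treat them as honest functors rather than as mere pointwise recipes. The slice construction ultimately requires either Lurie's straightening/unstraightening machinery or the universal property of the tensoring of $\PrLU$ over $\scrS$; the presheaf construction relies on the $\infty$-categorical Yoneda embedding and its universal property. Once these functorial enhancements are in place, the remainder of the proof is forced by the universal property of $\scrS$.
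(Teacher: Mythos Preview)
Your argument is correct and takes a genuinely different, more conceptual route than the paper's. The paper simply invokes the naturality of the straightening/unstraightening equivalence directly (citing \cite[Proposition $2.2.1.1$]{htt}), whereas you argue that both $\scrS_{/(-)}$ and $\Fun((-)^{\op},\scrS)$ are cocontinuous functors $\scrS\to\PrLU$ sending the point to $\scrS$, and then conclude via the universal property of $\scrS$ as the free cocompletion of a point.

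What each approach buys: the paper's proof is a one-liner once one accepts that Lurie's straightening is packaged naturally. Your approach trades that black box for a different one, reducing everything to cocontinuity plus the universal property of $\scrS$; this is arguably more robust and is in fact the style of argument the paper itself uses elsewhere (e.g.\ in the proof of \cref{prop:FunandTensor}). Two small caveats. First, your identification $\scrS_{/X}\simeq X\otimes\scrS$ is not entirely free of circularity, since the most direct justification passes through the very straightening equivalence you are trying to establish; to keep the argument self-contained you should rather construct $\scrS_{/(-)}$ directly (say via the target fibration $\Fun(\Delta^1,\scrS)\to\scrS$) and verify its cocontinuity by descent in the $\infty$-topos $\scrS$. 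Second, a minor slip: $\FuninL(-,\scrC)$ is contravariant and takes colimits to limits, so it is not ``cocontinuous'' as you write---but this does not affect your argument, since all you need is that it is a functor.
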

\begin{proof}
This is an immediate consequence of the straightening process underlying the Grothendieck construction of \cite{htt}; in particular, the naturality is a consequence of \cite[Proposition $2.2.1.1$]{htt}. 
\end{proof}
 We now consider equivalence $\eqref{equivalences:chain}.(d)$. The functoriality of the equivalence
 \[
\scrS_{/X}\simeq\LMod_{\Omega_*X}(\scrS)
 \]is actually already proved in \cite[Remark $5.2.6.28$]{ha}. Hence, after precomposition with the forgetful functor
$$
\scrS^{\geqslant1}_* \longrightarrow\scrS^{\geqslant 1}\subseteq\scrS
$$
we obtain the naturality statement we need.
\begin{lemma}[Equivalence $\eqref{equivalences:chain}.(d)$ is natural]
\label{lemma:equivalencepart3}
There is a natural equivalence of functors
$$ {\FuninL}{\lp\scrS_{/(-)},\hsp\scrC\rp} \simeq{\FuninL}{\lp\LMod_{\Omega_*(-)}{\lp\scrS\rp},\hsp\scrC\rp}
\colon\scrS^{\geqslant1}_*\longrightarrow \PrLU.$$ 
\end{lemma}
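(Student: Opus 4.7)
The plan is to leverage the preceding discussion and reduce this statement to an application of a result already in the literature. The key point is that the heavy lifting has already been done by Lurie in \cite[Remark $5.2.6.28$]{ha}, which provides a natural equivalence of functors
$$\scrS_{/(-)}\simeq\LMod_{\Omega_*(-)}{\lp\scrS\rp}\colon\scrS^{\geqslant1}_*\longrightarrow\PrLU,$$
where the source is the category of pointed connected spaces. Indeed, the left-hand side can be viewed as the straightening of the universal left fibration, the right-hand side as the category of modules over the canonical $\Ebb_1$-monoid associated to a pointed connected space, and Lurie's result is precisely that these two constructions agree functorially.

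Given such a natural equivalence, the lemma then follows formally by postcomposing with the contravariant functor
$$\FuninL{\lp-,\scrC\rp}\colon\lp\PrLU\rp^{\!\op}\longrightarrow\PrLU,$$
which is well defined and functorial since $\PrLU$ is closed symmetric monoidal with internal hom given by $\FuninL$ (\cite[Remark $4.8.1.6$]{ha}). Precomposition and postcomposition with functors preserve natural equivalences, so this immediately yields the claimed equivalence of functors $\scrS^{\geqslant1}_*\to\PrLU$.

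Strictly speaking, no new construction is required: the point of the statement is simply to record, for later use in the proof of \cref{prop:koszulisnatural}, that the identification $\scrS_{/X}\simeq\LMod_{\Omega_*X}(\scrS)$ is genuinely natural in the pointed connected space $X$, rather than merely being a pointwise equivalence. The only thing to verify is that the equivalence of \cite[Remark $5.2.6.28$]{ha} is formulated as an equivalence of functors $\scrS^{\geqslant1}_*\to\PrLU$, which it is, since both sides are functorially presentable categories and the equivalence is realized by a colimit-preserving functor at each $X$. This is the whole content of the argument; there is no real obstacle, and the statement is best viewed as a bookkeeping lemma assembling the naturality of previously known constructions.
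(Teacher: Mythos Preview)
Your proof is correct and follows essentially the same approach as the paper: both arguments reduce the lemma to the naturality statement in \cite[Remark $5.2.6.28$]{ha} and then apply a formal functorial manipulation (you postcompose with $\FuninL(-,\scrC)$; the paper notes one must precompose with the forgetful functor $\scrS^{\geqslant1}_*\to\scrS$ so that the two sides have the same domain before postcomposing). There is no substantive difference.
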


Finally, we are left to prove the following.
\begin{lemma}[Equivalence $\eqref{equivalences:chain}.(e)$ is natural]
\label{lemma:equivalencepart4}
There is a natural equivalence of functors
$$
{\FuninL}{\lp\LMod_{\Omega_*(-)}{\lp\scrS\rp},\hsp\scrC\rp}
\simeq \RMod_{\Omega_*(-)}(\scrC)\colon\scrS^{\geqslant1}_*\longrightarrow \PrLU.$$
\end{lemma}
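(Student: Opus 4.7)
The plan is to promote the pointwise equivalence from \cref{lemma:eilenbergwattsnatural} (which we already know to be natural in $\scrC$) to an equivalence that is also natural in $X$. Via the loop-space functor $\Omega_*\colon\scrS^{\geqslant1}_*\to\operatorname{Mon}_{\Ebb_1}(\scrS)$, it suffices to construct a natural transformation between two functors out of $\operatorname{Mon}_{\Ebb_1}(\scrS)$ and then restrict. Both assignments $A\mapsto\FuninL(\LMod_A(\scrS),\scrC)$ and $A\mapsto\RMod_A(\scrC)$ genuinely extend to functors on $\operatorname{Mon}_{\Ebb_1}(\scrS)$: the functoriality is supplied by base change along morphisms of $\Ebb_1$-algebras (HA \S4.5), and composition with $\Omega_*$ then yields the functors on $\scrS^{\geqslant1}_*$ appearing in the statement.

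The strategy is to realize the Eilenberg--Watts equivalence as a composite of three natural equivalences, mirroring the chain used in the proof of \cref{lemma:eilenbergwattsnatural}:
\[
\FuninL(\LMod_A(\scrS),\scrC)\simeq \scrC\otimes\LMod_A(\scrS)^{\vee}\simeq \scrC\otimes\RMod_A(\scrS)\simeq \RMod_A(\scrC).
\]
The first equivalence holds because $\LMod_A(\scrS)$ is dualizable in $\PrLU$ together with the formula $\FuninL(D,-)\simeq(-)\otimes D^{\vee}$ for dualizable $D$ (HA 4.6.1.5), whose naturality in $D$ is guaranteed by the functoriality of dualization (HA 4.6.1.3). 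The second equivalence identifies $\LMod_A(\scrS)^{\vee}\simeq\RMod_A(\scrS)$ via the evaluation pairing $M\otimes N\mapsto M\otimes_AN$; naturality in $A$ follows from associativity and compatibility of relative tensor products along base change. The third equivalence is precisely \cref{lemma:eilenbergwattsnatural}, and inspection of its proof exhibits it as the action of $\RMod_A(\scrS)$ on the $\scrS$-linear category $\scrC$, which is manifestly natural in $A$.

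The main obstacle is to produce the duality data witnessing $\LMod_A(\scrS)^{\vee}\simeq\RMod_A(\scrS)$ as actual natural transformations of functors on $\operatorname{Mon}_{\Ebb_1}(\scrS)$, rather than as a family of pointwise equivalences; indeed, in $\infty$-category theory coherence of such data across the full simplicial direction of morphisms is not automatic. I would resolve this by working within Lurie's Morita-theoretic framework (HA \S4.8.5), in which $A\mapsto\LMod_A(\scrS)$ is already packaged as a functor with values in the symmetric monoidal $\infty$-category $\PrLU$, and where the evaluation map $\LMod_A(\scrS)\otimes\RMod_A(\scrS)\to\scrS$ and the coevaluation map $\scrS\to\RMod_A(\scrS)\otimes\LMod_A(\scrS)$ arise from bimodule composition with the identity $A$--$A$-bimodule, hence are natural in $A$ by construction. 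With these natural duality data in hand, the composite of the three equivalences displayed above yields the required natural equivalence; precomposing with $\Omega_*$ produces the transformation in $\operatorname{Fun}(\scrS^{\geqslant1}_*,\PrLU)$ claimed in the statement.
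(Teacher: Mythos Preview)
Your approach is conceptually close to the paper's but has a gap in the treatment of variance. You claim that the duality $\LMod_A(\scrS)^{\vee}\simeq\RMod_A(\scrS)$ is natural in $A$ when both sides carry base-change (extension of scalars) functoriality. This is not correct: if $A\mapsto\LMod_A(\scrS)$ is covariant via extension, then dualization (contravariant on dualizable objects) makes $A\mapsto\LMod_A(\scrS)^{\vee}$ contravariant, corresponding under the identification with $\RMod_A(\scrS)$ to \emph{restriction} of scalars rather than extension. Concretely, the coevaluation $\scrS\to\RMod_A(\scrS)\otimes\LMod_A(\scrS)$ cannot be a natural transformation when both factors carry extension: for $f\colon A\to B$, applying extension on both sides to the image $A$ of the coevaluation yields $B\otimes_A A\otimes_A B\simeq B\otimes_A B$, not $B$. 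So the Morita framework does not supply the natural duality data in the form you assert. Relatedly, you never verify that the functoriality you impose on $\FuninL(\LMod_A(\scrS),\scrC)$ via ``base change'' agrees with the functoriality inherited from equivalences (a)--(d) of the chain; tracing through, the latter is precomposition with \emph{restriction} of scalars (the right adjoint to extension), not with extension itself.

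The paper avoids decomposing the equivalence into a chain of dualization steps. It first reduces to $\scrC=\scrS$; the general case then follows by tensoring with $\scrC$, since both sides are obtained from the $\scrS$-case by applying the functor $(-)\otimes\scrC$. In the case $\scrC=\scrS$ it works directly at the level of simplices: for a $1$-simplex $f\colon X\to Y$, the induced cocontinuous functor $\RMod_{\Omega_*X}(\scrS)\to\RMod_{\Omega_*Y}(\scrS)$ is identified, via Eilenberg--Watts, with tensoring by a specific bimodule, which is seen to be $\Omega_*Y$ (obtained as the pullback of $\Omega_*Y\in\LMod_{\Omega_*Y}(\scrS)$ along $f^*$), i.e., the functor is $-\otimes_{\Omega_*X}\Omega_*Y$. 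The higher coherences are then precisely the associativity constraints of the relative tensor product. Your approach can be repaired by using restriction-of-scalars functoriality on $\LMod_A$ inside $\FuninL(-,\scrC)$ and extension-of-scalars on $\RMod_A$; with these choices the duality identification is indeed natural and your chain goes through.
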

\begin{proof}
We first assume that $\scrC$ is the category of spaces $\scrS$. In this case, there is a well defined functor$$\FuninL(-,\hsp\scrS)\circ\LMod_{\Omega_*(-)}{\lp\scrS\rp}\colon\scrS\longrightarrow\smash{\lp\PrLU\rp^{\op}}\simeq\PrRU\longrightarrow\PrLU$$which point-wise agrees with$$\RMod_{\Omega_*(-)}{\lp\scrS\rp}\colon\scrS\longrightarrow\PrLU.$$So, let $\sigma$ be an $n$-simplex in the category of spaces $\scrS$: the image of such $n$-simplex under ${\FuninL}{\lp\LMod_{\Omega_*(-)}{\lp\scrS\rp},\hsp\scrS\rp}$ produces an $n$-dimensional commutative diagram $\widehat{\sigma}$ of categories of left modules inside $\PrLU$. Each $1$-simplex$$\left\{f\colon X\to Y\right\}\subseteq \sigma$$becomes, as a $1$-simplex of $\widehat{\sigma}$, a cocontinuous functor between categories of right modules $$f_*\colon\RMod_{\Omega_*X}{\lp\scrS\rp}\longrightarrow\RMod_{\Omega_*Y}{\lp\scrS\rp}.$$Using again the categorical Eilenberg-Watts Theorem, any functor as above is canonically equivalent to taking the tensor product with some ${\lp\Omega_*X,\Omega_*Y\rp}$-bimodule over $\Omega_*X$: an immediate inspection shows that such bimodule is the pullback along the forgetful functor $$f^*\colon \LMod_{\Omega_*Y}{\lp\scrS\rp}\longrightarrow\LMod_{\Omega_*X}{\lp\scrS\rp}$$of $\Omega_*Y$, seen as a left $\Omega_*Y$-module. In particular, the functor $f_*$ is canonically equivalent to$$-\otimes_{\Omega_*X}\Omega_*Y\colon\RMod_{\Omega_*X}{\lp\scrS\rp}\longrightarrow\RMod_{\Omega_*Y}{\lp\scrS\rp}.$$Using the fact that the relative tensor product is associative up to \textit{canonical} homotopy (\cite[Section $4.4.3$]{ha}), it follows that the $n$-simplex $\widehat{\sigma}$ agrees naturally with the homotopy coherence witnessing the associativity of the relative tensor product; hence, we have the desired equivalence of functors$${\FuninL}{\lp\LMod_{\Omega_*(-)}{\lp\scrS\rp},\hsp\scrS\rp}\simeq\RMod_{\Omega_*(-)}{\lp\scrS\rp}.$$The case of a general presentable category $\scrC$ is implied by the case $\scrC=\scrS$ since both functors ${\FuninL}{\lp\LMod_{\Omega_*(-)}{\lp\scrS\rp},\hsp\scrC\rp}$ and $\RMod_{\Omega_*(-)}{\lp\scrC\rp}$ naturally agree with the composition of the functor$${\FuninL}{\lp\LMod_{\Omega_*(-)}{\lp\scrS\rp},\hsp\scrS\rp}\simeq\RMod_{\Omega_*(-)}{\lp\scrS\rp}\colon\scrS\longrightarrow\PrLU$$with the functor$$-\otimes\scrC\colon\PrLU\longrightarrow\PrLU$$where $\otimes$ denotes Lurie's tensor product of cocomplete categories. 
\end{proof}
\begin{proof}[Proof of \cref{prop:koszulisnatural}]
\cref{lemma:equivalencepart1,lemma:equivalencepart2,lemma:equivalencepart3,lemma:equivalencepart4} together  imply \cref{prop:koszulisnatural} when $\scrC$ is presentable. Arguing as in \cref{monodromyeq}, we deduce that the statement holds also when $\scrC$ is a general (not necessarily presentable) cocomplete category. The $\scrA$-linear case is proved in the same way. 
\end{proof}
\section{Categorical local systems and Teleman's topological actions}
\label{sec:categorifyingmodules}
\numberwithin{equation}{section}
\setcounter{subsection}{1}
\cref{monodromyeq} gives a description of  of local systems on a space $X$ with coefficients in a cocomplete category in terms of monodromy data. In this Section we refine this statement. We will show that when $X$ is simply connected, categorical local systems can be described in terms of \emph{higher monodromy}: i.e. in terms of appropriate actions of the iterated loop space $\Omega_*^2X$. This will allow us to revisit, from the perspective of $\infinity$-categories, an interesting proposal of Teleman on topological group actions on categories.

 In   \cite{teleman}, Teleman 
  argues that  the datum of a $G$-action on $\Bbbk$-linear differential graded category   $\scrC$ should be equivalent to a morphism of $\Ebb_2$-algebras
$$\mathrm{C}_{\bullet}(\Omega_*G;\Bbbk)\longrightarrow\mathrm
{HH}^{\bullet}(\scrC).$$Here the source is the $\Ebb_2$-algebra of chains on the based loop space $\Omega_*G$, endowed with its Pontrjagin product (we are assuming that $G$ is connected); while the target is the Hochschild cohomology of $\scrC$.  While motivating the plausibility of this statement via a couple of examples, Teleman does not actually propose a proof of it. However, in the context of $\infinity$-category theory there is a natural way to interpret the action of a group object $G$ on any cocomplete category $\scrC$. Indeed by \cite[Section $4.4.4$]{htt} every cocomplete category $\scrC$ is tensored over the category of spaces $\scrS$. For any topological group $G$  one can consider the category $\LMod_G(\scrC)$ of left $G$-modules in $\scrC$. We will  prove that when $G$ is connected the datum of such a $G$-action on $\scrC$, where $\scrC$ is a presentable category which is $\Bbbk$-linear over a ring spectrum $\Bbbk$, is indeed encoded equivalently as a map of $\mathbb{E}_2$-ring spectra
\begin{equation}
\label{topacthh}
\Sigma^{\scriptstyle\infty}_+\Omega_*G \wedge \Bbbk\longrightarrow \operatorname{HH}^{\bullet}(\mathscr{C}).
\end{equation}
When $\Bbbk$ is a ordinary commutative ring, this recovers precisely Teleman's statement.

The main result of this Section is \cref{cor:maincor}. 
It is the key ingredient in the proof of \eqref{topacthh}.    \cref{cor:maincor} refines  \cref{cor:koszulpresentable} by describing local systems of categories in terms of  higher  monodromy data. Namely, we show that if $X$ is   \emph{simply connected}, local systems of presentable $\Bbbk$-linear categories on $X$ can be  described as iterated modules over the $\Ebb_2$-algebra $\Sigma_+^{\scriptstyle\infty}(\Omega_*\Omega_*X)\wedge\Bbbk$.

The first part of this Section will be dedicated to the proof of \cref{cor:maincor}. This will require several preliminary steps, starting with a \emph{linearization} statement which we  prove next  (see \cref{lemma:rectificationofmodules} below). Then in the second part of the Section we will turn our attention to the comparison with Teleman's notion of topological action.

\begin{lemma}
	\label{lemma:rectificationofmodules}
	Let $\scrA$ be a cocompletely symmetric monoidal category. Let $\boldone_{\scrA}$ denote the unit for the monoidal structure on $\scrA$, and let $G$ be an $\Ebb_{k+1}$-monoid in spaces. Then there is an equivalence of $\Ebb_{k}$-monoidal categories$$\LMod_G{\lp\scrA\rp}\simeq\LMod_{G\otimes \boldone_{\scrA}}{\lp\scrA\rp}.$$
\end{lemma}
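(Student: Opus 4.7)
\emph{Proof plan.} The strategy is to produce the equivalence by identifying both sides as $\Ebb_k$-monoidal Eilenberg--Moore categories of the same monad on $\scrA$, obtained by pushing the $\scrS$-action on $\scrA$ through the canonical cocontinuous symmetric monoidal functor from spaces to $\scrA$.

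First I would recall that $\scrS$ is the monoidal unit of the symmetric monoidal structure on $\CatVinftycocom$ (\cite[Corollary $4.8.1.4$]{ha}). Since $\scrA$ is a commutative algebra object in $\CatVinftycocom$, this produces an essentially unique cocontinuous symmetric monoidal functor $F\colon\scrS\to\scrA$, described on objects by $X\mapsto X\otimes\boldone_{\scrA}$, where the tensor denotes the canonical $\scrS$-tensoring of the cocomplete category $\scrA$. Applying $\Alg_{\Ebb_{k+1}}(F)$ to $G$ produces $F(G)\simeq G\otimes\boldone_{\scrA}$ as an $\Ebb_{k+1}$-monoid in $\scrA$. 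The forgetful functors $\LMod_G(\scrA)\to\scrA$ and $\LMod_{G\otimes\boldone_{\scrA}}(\scrA)\to\scrA$ are both monadic, with monads given respectively by $G\otimes(-)$ using the $\scrS$-tensoring and by $(G\otimes\boldone_{\scrA})\otimes_{\scrA}(-)$ using the internal tensor product of $\scrA$. The very definition of the $\scrS$-tensoring on a commutative algebra in $\CatVinftycocom$ supplies a natural equivalence $X\otimes C\simeq (X\otimes\boldone_{\scrA})\otimes_{\scrA}C$ for every $X\in\scrS$ and $C\in\scrA$, and this identifies the two monads, yielding the underlying equivalence of categories.

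To upgrade this to an equivalence of $\Ebb_k$-monoidal categories I would invoke the general principle (from \cite[Section $5.1$]{ha} and the Dunn--Lurie additivity theorem) that for an $\Ebb_{k+1}$-monoid $M$ in a symmetric monoidal category $\scrB$, the category $\LMod_M(\scrB)$ inherits a canonical $\Ebb_k$-monoidal structure, natural in the pair $(\scrB,M)$ along symmetric monoidal functors preserving the algebra. Applied to $F\colon\scrS\to\scrA$ and to $G$, this produces an $\Ebb_k$-monoidal functor $\LMod_G(\scrA)\to\LMod_{G\otimes\boldone_{\scrA}}(\scrA)$ whose underlying functor is the equivalence of the previous paragraph. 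The main obstacle will be making this naturality of $\LMod$ with respect to the ambient symmetric monoidal category entirely precise at the level of $\Ebb_k$-monoidal structures; the bookkeeping is delicate because one must simultaneously keep track of the change of ambient category and of the change of algebra, which amounts to assembling $\LMod$ into a coherent functor from the total category of pairs $(\scrB,M)$ with $M\in\Alg_{\Ebb_{k+1}}(\scrB)$ to $\Alg_{\Ebb_k}(\CatVinftycocom)$, and evaluating it on the unit morphism of $\scrA$ in $\CatVinftycocom$.
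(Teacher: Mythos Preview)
Your approach is essentially the same as the paper's: both rest on the observation that the canonical $\scrS$-tensoring of $\scrA$ is encoded by the unique cocontinuous symmetric monoidal functor $F\colon\scrS\to\scrA$, $X\mapsto X\otimes\boldone_{\scrA}$, so that the two notions of ``$G$-module in $\scrA$'' coincide. Your monadic presentation of the underlying equivalence is in fact more explicit than the paper's one-line remark.

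There is, however, a wrinkle in your second paragraph. The naturality you invoke---in the pair $(\scrB,M)$ with $M\in\Alg_{\Ebb_{k+1}}(\scrB)$---would, applied to $F\colon\scrS\to\scrA$, produce an $\Ebb_k$-monoidal functor $\LMod_G(\scrS)\to\LMod_{F(G)}(\scrA)$, not a functor out of $\LMod_G(\scrA)$. The correct input is not a change of the \emph{ambient} category of the algebra but a change of the \emph{action} on the fixed module category $\scrA$: the $\scrS$-tensoring on $\scrA$ is the restriction along $F$ of the tautological $\scrA$-action on itself, and restriction of scalars for $\LM$-monoidal categories (an equivalence in this case) carries $\LMod_{F(G)}(\scrA)$ to $\LMod_G(\scrA)$ compatibly with the $\Ebb_k$-monoidal structures. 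The paper packages this as: both $\Ebb_k$-structures arise by feeding an $\Ebb_{k+1}$-algebra through the symmetric monoidal functor $\LMod_{(-)}(\scrA)\colon\Alg(\scrA)\to(\LinAPrLU)_{\scrA/}$, and $F$ intertwines them. Your proposed ``total category of pairs $(\scrB,M)$'' will not suffice unless you also record the $\scrB$-module $\scrA$ as part of the data; once you do, the argument goes through and coincides with the paper's.
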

\begin{proof}
Since the monoidal unit for the symmetric monoidal structure on $\CatVinftycocom$ is the category of spaces $\scrS$, we have a chain of equivalences$$\CAlg{\lp\CatVinftycocom\rp}\simeq\CAlg{\lp\Mod_{\scrS}{\lp\CatVinftycocom\rp}\rp}\simeq\CAlg{\lp\CatVinftycocom\rp}_{\scrS/}$$where the second equivalence is provided by \cite[Corollary $3.4.1.7$]{ha}. In particular, there exists an essentially unique symmetric monoidal and colimit-preserving functor$$-\otimes\boldone_{\scrA}\colon\scrS\longrightarrow\scrA$$
 which is uniquely determined by the assignment $\left\{*\right\}\mapsto \boldone_{\scrA}$. It follows that, if $G$ is a topological $\Ebb_{k+1}$-monoid, then  $G\otimes\boldone_{\scrA}$ is an $\Ebb_{k+1}$-algebra object in $\scrA$.  This means that the (essentially unique) action of $\scrS$ on $\scrA$ is encoded in the functor $-\otimes\boldone_{\scrA}$. Hence for any topological $\Ebb_{k+1}$-monoid $G$ we have the desired equivalence of categories of left modules.

Let us explain next why this equivalence is   $\Ebb_k$-monoidal. This follows from the fact that the $\Ebb_k$-monoidal structures on $\LMod_G(\scrA)$ and $\LMod_{G\otimes \boldone_{\scrA}}{\lp\scrA\rp}$ are both induced by the $\Ebb_{k+1}$-monoid structures of $G$ and $G\otimes\boldone_{\scrA}$, respectively, via the symmetric monoidal functor$$\LMod\colon\Alg(\scrA)\longrightarrow\lp\LinAPrLU\rp_{\scrA/}.$$Since the $\Ebb_{k+1}$-monoid structure of $G\otimes\boldone_{\scrA}$ is in turn induced by the one on $G$ via the symmetric monoidal functor $-\otimes\boldone_{\scrA}$, it follows that the two $\Ebb_k$-monoidal structures on $\LMod_G(\scrA)$ and $\LMod_{G\otimes\boldone_{\scrA}}(\scrA)$  agree as we claimed, and this concludes the proof.
\end{proof}
\begin{remark}
	\label{remark:chainscochains}
	An important setting for \cref{lemma:rectificationofmodules} is when  
	 $$\scrA=\Mod_{\Bbbk}\coloneqq\Mod_{\Bbbk}(\Sp)$$ for some $\Einf$-ring spectrum $\Bbbk$. Then $\boldone_{\scrA}=\Bbbk$ and $\Omega_*G\otimes \Bbbk$ computes the $\Bbbk$-valued chains of  $\Omega_*G$ with coefficients in $\Bbbk$. Indeed, $\Mod_{\Bbbk}$ is  presentable and stable (\cite[Corollaries $4.2.3.7$ and $7.1.1.5$]{ha}), hence the essentially unique cocontinuous functor $\scrS\to\Mod_{\Bbbk}$ factors through the essentially unique cocontinuous functor $$-\wedge\Bbbk\colon\Sp\longrightarrow\Mod_{\Bbbk}.$$
	 In fact, the category of spectra $\Sp$ is initial among stable presentably symmetric monoidal categories (\cite[Proposition $4.8.2.18$]{ha}). In particular, for any space $X$ we have that $$X\otimes \boldone_{\scrA}= X\otimes \Bbbk\coloneqq \Sigma^{\scriptstyle\infty}_+X\wedge \Bbbk$$where $\Sigma^{\scriptstyle\infty}_+\colon\scrS\to\Sp$ is the  suspension spectrum functor. But $\Sigma^{\scriptstyle\infty}_+X\wedge E$ is precisely the spectrum computing the homology of $X$ with coefficients in the generalized homology theory $E$. Because of this, in the following, for any space $X$ and for any $\Einf$-ring $\Bbbk$ we shall write $\mathrm{C}_{\bullet}(X;\Bbbk)$ for the $\Bbbk$-module $X\otimes\Bbbk$. Its $\Bbbk$-linear dual, which computes the $\Bbbk$-linear \textit{cochains} of $X$, shall similarly  be denoted as $\operatorname{C}^{\bullet}(X;\Bbbk)$. When $\Bbbk$ is discrete, these objects agree with the usual $\Bbbk$-valued chain and cochain complexes of classical algebraic topology.
\end{remark}

\begin{corollary}
	\label{cor:rectificationofmodules}
	Let $\scrA$ be a presentably symmetric monoidal category, and let $X$ be a connected space. Then there is an equivalence of categories$$\LocSysCat(X;\scrA)\simeq\operatorname{Lin}_{\Omega_*X\otimes\scrA}\PrLU.$$
\end{corollary}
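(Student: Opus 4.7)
The plan is to deduce the statement by a direct concatenation of the two immediately preceding results. First, apply \cref{cor:koszulpresentable} to the connected space $X$ with cocomplete category of coefficients $\LinAPrLU$, which yields
$$
\LocSysCat(X;\scrA) \simeq \LMod_{\Omega_*X}{\lp\LinAPrLU\rp}.
$$
To finish, I would invoke \cref{lemma:rectificationofmodules} with $\LinAPrLU$ playing the role of $\scrA$ and with $G = \Omega_*X$. Two things must be checked. First, $\LinAPrLU \simeq \Mod_{\scrA}{\lp\PrLU\rp}$ carries a natural cocompletely symmetric monoidal structure, arising from the fact that $\scrA$ is a commutative algebra object in $\PrLU$, and its monoidal unit is $\scrA$ itself. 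Second, since $X$ is connected, $\Omega_*X$ is an $\Ebb_1$-monoid in spaces, matching the hypothesis of the lemma in the $k=0$ case. The lemma then produces
$$
\LMod_{\Omega_*X}{\lp\LinAPrLU\rp} \simeq \LMod_{\Omega_*X\otimes\scrA}{\lp\LinAPrLU\rp},
$$
and unfolding notation, the right-hand side is exactly $\operatorname{Lin}_{\Omega_*X\otimes\scrA}\PrLU$.

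The only point requiring care is the verification that the symmetric monoidal structure on $\LinAPrLU$ is cocomplete in the sense required by \cref{lemma:rectificationofmodules}; this reduces to noting that the relative tensor product $-\otimes_{\scrA}-$ preserves colimits separately in each variable, which in turn follows from the presentability of $\scrA$ and of $\PrLU$ and the compatibility of their respective tensor products with colimits. Modulo this standard verification, the proof is a formal concatenation of results already established in the paper, so no substantial obstacle is expected.
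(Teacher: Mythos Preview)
Your proposal is correct and follows essentially the same approach as the paper: the paper's proof is the one-line remark that the result follows from \cref{lemma:rectificationofmodules} once one knows $\LinAPrLU$ is cocompletely symmetric monoidal, with the equivalence $\LocSysCat(X;\scrA)\simeq\LMod_{\Omega_*X}(\LinAPrLU)$ from \cref{cor:koszulpresentable} taken as given. Your slightly more explicit write-up, including the check that $\Omega_*X$ is an $\Ebb_1$-monoid and that the relative tensor product over $\scrA$ is cocomplete, is exactly what the paper's terse sentence is compressing.
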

\begin{proof}
This follows from \cref{lemma:rectificationofmodules} since $\LinAPrLU$ is cocompletely symmetric monoidal.
\end{proof}
Our  next goal is to better understand $\Omega_*X\otimes\scrA$.  We find it convenient to study the general problem of describing   $G \otimes \scrA$ where $G$ is an $\Ebb_k$-monoid in spaces. Our results in this direction are \cref{prop:FunandTensor}  and \cref{prop:koszulismonoidal} below. This will be key to establish the main result of this Section, \cref{cor:maincor}. We start by stating a couple of Lemmas. 
 
 \begin{lemma}
	\label{lemma:externalduality}
Let $\scrC$ be a symmetric monoidal category which is additionally both tensored and cotensored over a symmetric monoidal category $\scrA$ (in the sense of \cite[$4.2.1.28$]{ha}). Then the cotensor functor $\boldone_{\scrC}^{(-)}\colon \scrA^{\op}\to\scrC$ is lax monoidal.
\end{lemma}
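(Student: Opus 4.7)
The plan is to produce the lax symmetric monoidal structure on the cotensor functor $\boldone_{\scrC}^{(-)}\colon\scrA^{\op}\to\scrC$ directly from the universal property of cotensoring, and to establish coherence by identifying it with a composition of already-monoidal functors.

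First, I would unpack the hypotheses: $\scrC$ being symmetric monoidal and simultaneously (co)tensored over $\scrA$ makes $\scrC$ into a commutative algebra object in $\scrA$-modules inside $\CatVinfty$. In particular, the base-change functor $-\otimes\boldone_{\scrC}\colon\scrA\to\scrC$, $A\mapsto A\otimes\boldone_{\scrC}$, is strong symmetric monoidal, and the $\scrA$-action satisfies the projection formula $A\otimes(X\otimes_{\scrC}Y)\simeq (A\otimes X)\otimes_{\scrC}Y$, naturally in $A\in\scrA$ and $X,Y\in\scrC$.

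Second, I would exhibit the lax structure maps. The unit equivalence $\boldone_{\scrC}\to\boldone_{\scrC}^{\boldone_{\scrA}}$ is forced by the universal property, since cotensoring with $\boldone_{\scrA}$ acts trivially. The multiplicativity map $\boldone_{\scrC}^A\otimes_{\scrC}\boldone_{\scrC}^B\to\boldone_{\scrC}^{A\otimes B}$ is obtained, via the defining equivalence $\Map_{\scrC}(X,\boldone_{\scrC}^{A\otimes B})\simeq\Map_{\scrC}((A\otimes B)\otimes X,\boldone_{\scrC})$, as the adjoint of the composite
\begin{equation*}
(A\otimes B)\otimes(\boldone_{\scrC}^A\otimes_{\scrC}\boldone_{\scrC}^B)\simeq(A\otimes\boldone_{\scrC}^A)\otimes_{\scrC}(B\otimes\boldone_{\scrC}^B)\longrightarrow\boldone_{\scrC}\otimes_{\scrC}\boldone_{\scrC}\simeq\boldone_{\scrC},
\end{equation*}
where the first equivalence combines the projection formula and symmetry, and the arrow is the tensor of the counits $A\otimes\boldone_{\scrC}^A\to\boldone_{\scrC}$ and $B\otimes\boldone_{\scrC}^B\to\boldone_{\scrC}$ of the tensor/cotensor adjunctions in $\scrC$.

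The main obstacle is promoting these pointwise maps to a bona fide morphism of commutative $\infty$-operads, rather than merely to a coherent-looking but unrigidified family. My preferred way to sidestep a simplex-by-simplex check is to factor
\begin{equation*}
\boldone_{\scrC}^{(-)}\colon\scrA^{\op}\xrightarrow{\;(-\otimes\boldone_{\scrC})^{\op}\;}\scrC^{\op}\xrightarrow{\;\Homin_{\scrC}(-,\boldone_{\scrC})\;}\scrC.
\end{equation*}
When $\scrC$ is closed symmetric monoidal this factorization is literal: the first factor is strong symmetric monoidal as the opposite of a strong monoidal functor, the internal-dual-to-the-unit functor $\Homin_{\scrC}(-,\boldone_{\scrC})$ is canonically lax symmetric monoidal via the standard evaluation-and-pairing construction available in any closed symmetric monoidal $\infty$-category (cf.\ \cite[\S4.6]{ha}), and the composite recovers the lax structure built pointwise above. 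In the general, non-closed case, the same argument can be run after Yoneda-embedding $\scrC$ into a closed enlargement — for instance its presheaf $\infty$-category with Day convolution — in which internal Homs always exist; the hypothesis that the cotensors $\boldone_{\scrC}^A$ already exist in $\scrC$ then ensures that the relevant representing objects descend from the enlargement back to $\scrC$, carrying the lax symmetric monoidal structure along.
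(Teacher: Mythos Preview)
Your argument is essentially correct, but it takes a different route from the paper's. The paper invokes Lurie's pairing formalism directly: the bifunctor $\Map_{\scrC}(\{-\}\otimes-,\boldone_{\scrC})\colon\scrA^{\op}\times\scrC^{\op}\to\scrS$ is classified by a pairing $\scrM\to\scrA\times\scrC$ which is left representable (the representing object for each $A$ being exactly $\boldone_{\scrC}^{A}$), and then \cite[Remark~5.2.2.25]{ha} supplies the lax monoidal structure on the associated duality functor in one stroke. There is no factoring through an internal Hom, no closedness hypothesis, and no Yoneda enlargement.

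Your approach trades that single black-box citation for a more explicit factorization $\boldone_{\scrC}^{(-)}\simeq\Homin_{\scrC}(-\otimes\boldone_{\scrC},\boldone_{\scrC})$, at the cost of needing $\scrC$ to be closed---which you then patch via Day convolution on presheaves and a descent step. This works, but note that the lax monoidality of the internal dual $\Homin_{\scrC}(-,\boldone_{\scrC})$ is itself typically established via the same representable-pairing mechanism (cf.\ \cite[\S5.2.5]{ha} rather than \S4.6), so your detour ultimately rests on the same foundation the paper appeals to directly. One small caution: your opening claim that the hypotheses force $\scrC$ to be a commutative $\scrA$-algebra (hence $-\otimes\boldone_{\scrC}$ strong monoidal, projection formula, etc.) is not automatic from ``$\scrC$ symmetric monoidal and tensored over $\scrA$'' read as two separate conditions; it requires the compatibility that is implicit in the paper's intended setting but which you should state explicitly if you rely on it.
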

\begin{proof}
	For $A$ and $C$ objects in $\scrA$ and $\scrC$ respectively, let us denote by $\left\{A\right\}\otimes C$ the object obtained from $A$ and $C$ via the tensor action of $\scrA$ over $\scrC$. Then, the contravariant bifunctor$$\Map_{\scrC}{\lp\left\{-\right\} \otimes -,\hsp\boldone_{\scrC}\rp}\colon\scrA^{\op}\otimes\scrC^{\op}\longrightarrow\scrS$$is classified by a pairing $\scrM\to\scrA\times \scrC$, which is left representable (in the sense of \cite[Definition $5.2.1.8$]{ha}). Indeed, by the very definition of \textit{cotensored category}, for every object $A$ in $\scrA$ the functor$$\Map_{\scrC}{\lp \left\{A\right\} \otimes -,\hsp\boldone_{\scrC}\rp}\colon\scrC^{\op}\longrightarrow\scrS$$is represented \textit{precisely} by $\boldone_{\scrC}^{A}$. So, the duality map $\boldone_{\scrC}^{(-)}\colon\scrA^{\op}\to\scrC$ is lax monoidal by \cite[Remark $5.2.2.25$]{ha}.
\end{proof}

The following useful observations have already been established in existing literature (see for example \cite{carmeli2023characterstransfermapscategorified,gammage2023perverse,hedenlund2023twisted}). We still provide proofs for the convenience of the reader.

	\begin{lemma}[{\cite[Theorem $3.2$]{gammage2023perverse}}]
\label{lemma:limitsandcolimitsovergroupoids}
		Let $\scrA$ be a presentably symmetric monoidal category, let $X$ be a space, and let $F\colon X\to\LinAPrLU$ be a diagram of shape $X$. Then, there is a natural equivalence $\lim F\simeq\colim F$ in $\LinAPrLU$.
	\end{lemma}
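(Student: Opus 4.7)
The plan is to deduce this semiadditivity-type statement from two ingredients: the self-duality of $\PrLU$ (and of $\LinAPrLU$) under passing to right adjoints, and the fact that every morphism in an $\infty$-groupoid is an equivalence, so every $F(f)$ admits its inverse as a right adjoint. Both observations together should force $\lim F$ and $\colim F$ to be computed as the limit in $\CatVinfty$ of two diagrams that are canonically identified via the self-duality $X\simeq X^{\op}$.

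I would proceed in three steps. First, since $X$ is an $\infty$-groupoid, $F$ factors through the core $\LinAPrLU^{\simeq}$: each $F(f)$ is an equivalence in $\LinAPrLU$, and its inverse is automatically $\scrA$-linear and colimit-preserving. Taking right adjoints pointwise therefore produces a diagram $F^R\colon X^{\op}\to\LinAPrLU$ which, under the canonical equivalence $\iota\colon X\xrightarrow{\simeq} X^{\op}$ of any $\infty$-groupoid with its opposite, is identified with $F$ itself. Second, I would invoke the standard computations recalled in \cref{cor:koszulpresentable} and its proof: $\colim F$ in $\LinAPrLU$ is obtained by passing to the right-adjoint diagram and taking its limit in $\CatVinfty$ (via the duality $\LinAPrLU\simeq (\operatorname{Lin}_{\scrA}\operatorname{Pr}^{\operatorname{R}})^{\op}$), while $\lim F$ is directly the limit of $F$ in $\CatVinfty$, since the forgetful functors $\LinAPrLU\to\PrLU\hookrightarrow\CatVinfty$ preserve all small limits. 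Third, combining these steps yields the chain of equivalences
\[
\colim F \;\simeq\; \lim F^R \;\simeq\; \lim(F\circ\iota) \;\simeq\; \lim F,
\]
which a posteriori lifts from $\CatVinfty$ to $\LinAPrLU$ thanks to conservativity of the forgetful functor on equivalences.

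The main obstacle will be upgrading the pointwise identification $F(x)^R \simeq F(x)$ to a coherent equivalence of $\infty$-functors $F^R \simeq F\circ\iota$, rather than merely a levelwise one. This requires careful use of Lurie's $\infty$-categorical adjoint functor construction, together with the canonical self-duality of $\infty$-groupoids (which in particular implies that inversion assembles into the canonical equivalence $X\simeq X^{\op}$). Once these coherences are verified, naturality of the resulting equivalence $\lim F\simeq\colim F$ in the data $F$ follows automatically from the functoriality of the constructions involved.
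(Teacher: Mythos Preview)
Your proposal is correct and follows essentially the same route as the paper: reduce both $\lim F$ and $\colim F$ to limits in $\CatVinfty$, using that colimits in $\PrLU$ are computed as limits of the right-adjoint diagram, and then identify $F^R$ with $F$ via the self-duality $X\simeq X^{\op}$ of groupoids. The paper dispatches the coherence obstacle you flag by invoking the fact that any equivalence can be promoted to an \emph{adjoint equivalence} (citing Riehl--Verity), which is exactly what guarantees that the passage $F\mapsto F^R$ coherently returns $F$ itself when all structure maps are invertible; one minor point is that the relevant facts about how limits and colimits in $\LinAPrLU$ are computed are not in \cref{cor:koszulpresentable} but rather in \cite[Corollaries~4.2.3.3 and~4.2.3.5]{ha}.
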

	\begin{proof}[Sketch of proof]
		Limits in $\LinAPrLU$ are computed as in $\PrLU$ (\cite[Corollary $4.2.3.3$]{ha}); and these in turn  are computed  as in $\CatVinfty$. On the other hand, colimits in $\LinAPrLU$ are computed as in $\PrLU$, because it is a cocompletely symmetric monoidal category, hence we can use \cite[Corollary $4.2.3.5$]{ha}. However, colimits in $\PrLU$ do not agree with colimits $\CatVinfty$: rather, they agree with \textit{limits} in $\CatVinfty$ \textit{after passing to the diagram of right adjoints}. Since any equivalence can be promoted to an \textit{adjoint equivalence} (\cite[Proposition $2.1.12$]{riehlverity}), and since $X$ is a groupoid, it follows that the "adjoint diagram" $F^{\op}$ is equivalent to $F$ itself, so the limit and the colimit coincide.
	\end{proof}
\begin{remark}
During the final stages of preparation of this paper,  \cref{lemma:limitsandcolimitsovergroupoids} was further generalized in \cite{benmoshe2024categoricalambidexterity}. The author shows that in fact the statement already holds in $\CatVinftycocom$. Even if this probably allows to harmlessly generalize our arguments and results to the cocomplete setting, we do not investigate this direction in the present work.
\end{remark}

 \begin{proposition}[{\cite[Corollary $4.12$]{carmeli2023characterstransfermapscategorified}}]
\label{prop:FunandTensor}
Let $\scrA$ be a presentably symmetric monoidal category and let $G$ be an $\Ebb_k$-monoid in spaces. Then we have an equivalence of $\scrA$-enriched presentably $\Ebb_k$-monoidal categories
$$ G\otimes\scrA \simeq \LocSys(G;\scrA).$$
\end{proposition}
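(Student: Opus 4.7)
The plan is to identify both sides with the object $\mathcal{P}(G)\otimes\scrA$ in $\LinAPrLU$, where $\mathcal{P}(G)=\Fun(G,\scrS)$ carries the Day convolution $\Ebb_k$-monoidal structure induced by $G$. For the underlying equivalence of $\scrA$-linear presentable categories: since $\scrS$ is the unit of the symmetric monoidal structure on $\PrLU$, we have $G\otimes\scrA\simeq(G\otimes\scrS)\otimes\scrA$, and the factor $G\otimes\scrS$ is by definition the colimit in $\PrLU$ of the constant diagram of shape $G$ with value $\scrS$, which coincides with the free cocompletion $\mathcal{P}(G)$ (using $G\simeq G^{\op}$ as $G$ is an $\infty$-groupoid, and the fact that the free presentable category on a small category is its presheaf category). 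The standard identification $\mathcal{P}(G)\otimes\scrA\simeq\Fun(G,\scrA)$ in $\PrLU$ from \cite{ha} then yields $G\otimes\scrA\simeq\Fun(G,\scrA)=\LocSys(G;\scrA)$ at the level of $\scrA$-linear presentable categories. Alternatively, this underlying equivalence can be deduced directly from the ambidexterity result (\cref{lemma:limitsandcolimitsovergroupoids}), which identifies $\colim_G\scrA$ with $\lim_G\scrA=\Fun(G,\scrA)$ inside $\LinAPrLU$.

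To upgrade this to an equivalence of $\Ebb_k$-monoidal categories, I would invoke two symmetric monoidality results. First, the presheaf functor $\mathcal{P}\colon\CatUinfty\to\PrLU$ is symmetric monoidal (\cite{ha}), hence it sends the $\Ebb_k$-algebra $G$ in $\scrS\subseteq\CatUinfty$ to an $\Ebb_k$-algebra $\mathcal{P}(G)$ in $\PrLU$; by the universal characterization of Day convolution, the resulting $\Ebb_k$-monoidal structure on $\mathcal{P}(G)=\Fun(G,\scrS)$ is precisely Day convolution. Second, the base change functor $-\otimes\scrA\colon\PrLU\to\LinAPrLU$ is symmetric monoidal (as extension of scalars along the canonical commutative algebra map $\scrS\to\scrA$ in $\PrLU$), and therefore carries the $\Ebb_k$-algebra $\mathcal{P}(G)$ to an $\Ebb_k$-algebra $\mathcal{P}(G)\otimes\scrA$ in $\LinAPrLU$. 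Composing with the identification of the first paragraph, this endows $G\otimes\scrA$ with an $\Ebb_k$-monoidal structure which agrees, by construction, with the one induced from the $\Ebb_k$-algebra structure of $G$ in $\scrS$ via the symmetric monoidal functor $-\otimes\scrA\colon\scrS\to\LinAPrLU$.

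The remaining task is to check that this transported $\Ebb_k$-monoidal structure on $\Fun(G,\scrA)\simeq\mathcal{P}(G)\otimes\scrA$ matches the Day convolution structure on $\LocSys(G;\scrA)$ coming directly from $G$ being $\Ebb_k$-monoidal in $\scrS$ and $\scrA$ being presentably symmetric monoidal. The cleanest argument is through universal properties: both $\Ebb_k$-monoidal structures on $\Fun(G,\scrA)$ are uniquely determined by the requirement that the composition of the Yoneda embedding $G\to\Fun(G,\scrS)$ with the base change $\Fun(G,\scrS)\to\Fun(G,\scrA)$ be an $\Ebb_k$-monoidal map compatible with the $\scrA$-action, and this characterization applies to both. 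The main technical obstacle is this monoidal bookkeeping, which requires careful tracking of coherences through the multiple tensor-product equivalences involved; the key inputs are the symmetric monoidality of $\mathcal{P}$ and of base change $-\otimes\scrA$, from which the remaining compatibilities are formal.
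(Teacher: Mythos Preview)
Your argument is correct and follows a genuinely different route from the paper's. The paper proves that the functor $\LocSys(-;\scrA)\colon\scrS\to\LinAPrLU$ is itself symmetric monoidal and cocontinuous (first establishing lax monoidality via \cref{lemma:externalduality}, then upgrading to strong monoidality and cocontinuity using the ambidexterity result \cref{lemma:limitsandcolimitsovergroupoids} together with Fubini for colimits), and then concludes by uniqueness of such a functor sending $\{*\}\mapsto\scrA$. You instead factor the canonical functor $\scrS\to\LinAPrLU$ as $\scrS\hookrightarrow\CatUinfty\xrightarrow{\mathcal{P}}\PrLU\xrightarrow{-\otimes\scrA}\LinAPrLU$, observe that each step is symmetric monoidal and cocontinuous by standard results from \cite{ha}, and then invoke the formula $\mathcal{P}(G)\otimes\scrA\simeq\Fun(G,\scrA)$ (which follows from \cite[Proposition 4.8.1.17]{ha} rather than from ambidexterity). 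Your approach is more modular, relying on the known symmetric monoidality of $\mathcal{P}$ rather than verifying strong monoidality of $\LocSys(-;\scrA)$ by hand; the paper's approach, by contrast, establishes the latter directly and in doing so makes the role of ambidexterity explicit, which is reused later in the $n$-categorical generalisation (\cref{lemma:limitsandcolimitsovergroupoids2}). Your final paragraph identifying the transported $\Ebb_k$-structure with Day convolution is the same identification the paper records in \cref{porism:keyporism}, and your appeal to the universal property of Day convolution is an acceptable way to handle it.
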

\begin{proof} 
Recall that $G\otimes\scrA$ is  the image of $G$ under the unique symmetric monoidal and colimit-preserving functor $\scrS\to\scrA$ determined by the  assignment 
$$\left\{*\right\}\mapsto\boldone_{\scrA}.$$ 
We will show that $\LocSys(-;\scrA)$ is also a symmetric monoidal and colimit-preserving functor mapping 
$\left\{*\right\}$ to $\boldone_{\scrA}$. 
 This implies that there is a  canonical  equivalence of functors 
$$-\otimes\scrA \simeq \LocSys(-;\scrA)$$
and so in particular proves the claim.

Note that the functor $\LocSys(-;\scrA)$ can be interpreted as the natural copowering functor$$\scrA^{(-)}\colon\scrS^{\op}\longrightarrow\LinAPrLU,$$ 
This immediately shows that it satisfies the condition that $\left\{*\right\}\mapsto\boldone_{\scrA}$.
Next, since the category $\LinAPrLU$ is a cocomplete closed symmetric monoidal category which is both (cocompletely) tensored and cotensored over spaces, we deduce that $\LocSys(-;\scrA)$ is a lax monoidal functor thanks to \cref{lemma:externalduality}. It remains to show that the functor $\LocSys(-;\scrA)$ preserves colimits, and that for any spaces $X$ and $Y$ the natural map $$\alpha_{XY}\colon\LocSys(X;\scrA)\otimes_{\scrA}\LocSys(Y;\scrA)\longrightarrow\LocSys(X\times Y;\scrA)$$is an equivalence. 	Using \cref{lemma:limitsandcolimitsovergroupoids}, we can now prove the following statements.
	\begin{enumerate}
\item The functor $\LocSys(-;\scrA)$ is cocontinuous. Indeed, let $Y$ be the colimit of a diagram $I\to\scrS$ with values in the category of spaces. Recall that every space $X$ is a colimit of a diagram of shape $X$ itself with constant valute at the point $\left\{*\right\}$. Obviously, we have a natural equivalence of $\scrA$-linear presentable categories$$Y\otimes\scrA\coloneqq\colim_Y\scrA\simeq\colim_{i\in I}\colim_{X_i} \scrA.$$We have also natural equivalences of $\scrA$-linear presentable categories\begin{align*}
\LocSys(Y;\scrA)&\simeq\LocSys{\lp\colim_Y\hsp \left\{*\right\};\scrA\rp}\\&\simeq \lim_Y\LocSys(\left\{*\right\};\scrA)\\&\overset{\ref{lemma:limitsandcolimitsovergroupoids}}{\simeq}\colim_Y \LocSys(\left\{*\right\};\scrA)\simeq\colim_Y\scrA.
	\end{align*}
 Analogously,$$\LocSys(X_i;\scrA)\simeq \smash{\colim_{X_i}\scrA},$$so we can conclude that $\LocSys(Y;\scrA)\simeq\smash{\colim_i}\LocSys(X_i;\scrA)$.
 \item The functor $\LocSys(-;\scrA)$ is strongly monoidal. Consider two spaces $X$ and $Y$, and let us again present each as a colimit of a constant diagram whose value is the point. Combining the compatibility of the monoidal structure of $\LinAPrLU$ with colimits and \cref{lemma:limitsandcolimitsovergroupoids}, we immediately see that the map $\alpha_{XY}$ above boils down to the natural equivalence$$\colim_X\colim_Y\scrA\overset{\simeq}{\longrightarrow}\colim_{X\times Y}\scrA$$provided by the \textit{Fubini theorem for homotopy colimits} (see for example \cite[Theorem $24.9$]{fubini}).
\end{enumerate}
\end{proof}
 \begin{remark}
	\label{porism:keyporism}
	It is implicit in the statement of the \cref{prop:FunandTensor} that $\LocSys(G;\scrA)$ 
	carries a natural $\Ebb_k$-monoidal structure.  This is clarified by the proof of \cref{prop:FunandTensor}. Indeed, we show that the functor $\LocSys(-;\scrA)$ is strongly monoidal. Hence, in particular, it preserves $\Ebb_k$-monoids. We remark that the resulting $\Ebb_k$-monoidal structure  on $\LocSys(G;\scrA)$ agrees with (the reverse of) the $\Ebb_k$-monoidal Day convolution product of \cite[Remark $2.2.6.8$]{ha}.
\end{remark}
\begin{proposition}
	\label{prop:koszulismonoidal}
	Let $\scrA$ be a presentably symmetric monoidal category. For any $k\geqslant 1$, we have a commutative diagram of functors$$\begin{tikzpicture}[scale=0.75]
	\node(a1)at(-4,2){$\Alg_{\Ebb_k}{\lp\scrS^{\geqslant1}\rp}$};\node(b1)at(3,2){$\Alg_{\Ebb_k}{\lp\LinAPrLU\rp}$};
	\node(a2)at(-4,0){$\scrS^{\geqslant1}_*$};\node(b2)at(3,0){${\lp\LinAPrLU\rp}_{\scrA/}$};
	\node(a3)at (-4,-2){$\scrS^{\geqslant1}$};\node(c) at(-1.5,-2){$\scrS$};\node(b3)at (3,-2){$\LinAPrLU.$};
	\draw[->,font=\scriptsize](a1)to node[above]{}(b1);
	\draw[->,font=\scriptsize]	(a2)to node[above]{$\LMod_{\Omega_*(-)}{\lp\scrA\rp}$}(b2);
	\draw[->,font=\scriptsize]	(c)to node[above]{$\LocSys(-;\scrA)$}(b3);
	\draw[->,font=\scriptsize](a1)to node[left]{$\oblv_{\Ebb_k}$}(a2);
	\draw[->,font=\scriptsize](a2)to node[left]{$\oblv_*$}(a3);
	\draw[->,font=\scriptsize](b1)to node[right]{$\oblv_{\Ebb_k}$}(b2);
	\draw[->,font=\scriptsize](b2)to node[right]{$\oblv_*$}(b3);
	\draw[right hook->,font=\scriptsize](a3) to node[above]{} (c);
	\end{tikzpicture}$$In particular, for $G$ a connected $\Ebb_k$-monoid in spaces, there is an equivalence of $\scrA$-enriched presentably $\Ebb_k$-monoidal categories
	$$\LocSys(G;\scrA)\simeq\LMod_{\Omega_*G}{\lp\scrA\rp}$$which is natural in $G$ and agrees with the equivalence of \cref{lemma:koszulduality}.
\end{proposition}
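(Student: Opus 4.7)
The plan is to construct the top horizontal arrow of the diagram by applying $\Alg_{\Ebb_k}(-)$ to the strongly symmetric monoidal functor $\LocSys(-;\scrA) \colon \scrS \to \LinAPrLU$ established in Proposition \ref{prop:FunandTensor}, and then to use the naturality statement of Proposition \ref{prop:koszulisnatural} to identify the resulting assignment on underlying pointed $\scrA$-linear categories with $\LMod_{\Omega_*(-)}(\scrA)$. Since $\LocSys(-;\scrA)$ is symmetric monoidal and colimit-preserving, functoriality of $\Alg_{\Ebb_k}(-)$ yields a functor $\Alg_{\Ebb_k}(\scrS) \to \Alg_{\Ebb_k}(\LinAPrLU)$, which we restrict along the fully faithful inclusion $\Alg_{\Ebb_k}(\scrS^{\geqslant 1}) \hookrightarrow \Alg_{\Ebb_k}(\scrS)$ of connected $\Ebb_k$-algebras to obtain the top arrow.

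With the top arrow in hand, commutativity of the lower square follows immediately from Proposition \ref{prop:koszulisnatural}, which asserts that after restriction along the inclusion $\scrS^{\geqslant 1}_* \hookrightarrow \scrS$ the functor $\LocSys(-;\scrA)$ is naturally equivalent to $\LMod_{\Omega_*(-)}(\scrA)$ as functors into $\LinAPrLU$. The content of the upper square is more delicate: for a connected $\Ebb_k$-algebra $G$, the $\Ebb_k$-monoidal structure inherited by $\LocSys(G;\scrA) \simeq \LMod_{\Omega_*G}(\scrA)$ from the symmetric monoidality of $\LocSys(-;\scrA)$, namely the Day convolution structure of Remark \ref{porism:keyporism}, must be shown to coincide with the canonical $\Ebb_k$-structure obtained by noting that $\Omega_*G$ is an $\Ebb_{k+1}$-algebra (via Dunn--Lurie additivity) and applying the symmetric monoidal functor $\LMod \colon \Alg(\scrA) \to (\LinAPrLU)_{\scrA/}$.

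The main obstacle is thus matching these two a priori different $\Ebb_k$-monoidal structures on $\LMod_{\Omega_*G}(\scrA)$. I would proceed by tracing the monoidality through each step of the chain of equivalences used in the proof of Proposition \ref{prop:koszulisnatural}: the enriched Yoneda Lemma of Theorem \ref{thm:yonedanatural}, the straightening-unstraightening identification $\LocSys(X^{\op}) \simeq \scrS_{/X}$, the equivalence $\scrS_{/X} \simeq \LMod_{\Omega_*X}(\scrS)$, and the categorical Eilenberg-Watts theorem, and verify that each is symmetric monoidal for the appropriate structures. The critical step is the third: for $X$ a pointed connected $\Ebb_k$-algebra in the Cartesian symmetric monoidal category $\scrS$, the overcategory $\scrS_{/X}$ inherits an $\Ebb_k$-monoidal structure from $X$, and the equivalence $\scrS_{/X} \simeq \LMod_{\Omega_*X}(\scrS)$ intertwines it with the $\Ebb_k$-monoidal structure on $\LMod_{\Omega_*X}(\scrS)$ coming from the $\Ebb_{k+1}$-structure on $\Omega_*X$. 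This expresses the fact that pointed looping is a symmetric monoidal functor from connected pointed $\Ebb_k$-algebras in spaces to $\Ebb_{k+1}$-algebras in spaces, a manifestation of the classical recognition principle. Once this compatibility is established, both naturality in $G$ and the agreement with the equivalence of Lemma \ref{lemma:koszulduality} are immediate from the naturality of each individual step in the chain.
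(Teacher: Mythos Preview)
Your proposal is essentially correct and identifies the crucial obstacle precisely: one must match the Day-convolution $\Ebb_k$-monoidal structure on $\LocSys(G;\scrA)$ (coming from applying $\Alg_{\Ebb_k}$ to the symmetric monoidal functor $\LocSys(-;\scrA)$) with the $\Ebb_k$-monoidal structure on $\LMod_{\Omega_*G}(\scrA)$ induced by the $\Ebb_{k+1}$-structure on $\Omega_*G$. However, your proposed route to resolve this --- tracing monoidality through each of the four equivalences (Yoneda, straightening, the slice equivalence $\scrS_{/X}\simeq\LMod_{\Omega_*X}(\scrS)$, Eilenberg--Watts) --- differs from what the paper actually does, and is considerably more laborious. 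Upgrading each of those natural equivalences to a \emph{symmetric monoidal} natural equivalence is not something established in \cref{prop:koszulisnatural}, and would take real work; in particular the step you flag as critical (the slice equivalence) does not come equipped with an evident monoidal refinement in the paper.

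The paper's argument sidesteps this entirely. Rather than dissecting the chain, it observes that the functor
\[
\LMod_{\Omega_*(-)}(\scrA)\colon \scrS^{\geqslant 1}_*\overset{\simeq}{\longrightarrow}\Alg^{\mathrm{grp}}_{\Ebb_1}(\scrS)\longhookrightarrow\Alg_{\Ebb_1}(\scrS)\xrightarrow{\LMod_{(-)}(\scrA)}(\LinAPrLU)_{\scrA/}
\]
is a composition of strongly monoidal functors (May's recognition theorem, the inclusion of grouplike algebras, and \cite[Theorem~4.8.5.16]{ha}), hence is itself strongly monoidal. Combined with \cref{prop:koszulisnatural}, which identifies this functor with $\LocSys(-;\scrA)$ at the level of $\Ebb_0$-algebras (i.e., pointed objects), Dunn additivity $\Alg_{\Ebb_k}(\Alg_{\Ebb_0}(\scrC))\simeq\Alg_{\Ebb_k}(\scrC)$ then bootstraps the identification to all $\Ebb_k$-levels at once, yielding the commutative diagram. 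Your approach would work, but the paper's factorisation trick is both shorter and avoids having to verify monoidal coherences step by step.
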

\begin{proof}
	\cref{prop:FunandTensor} and \cref{porism:keyporism} together imply that whenever $X$ is a pointed and connected space, we can reinterpret the natural equivalence $\LocSys(X;\scrA)\simeq\LMod_{\Omega_*X}{\lp\scrA\rp}$ of \cref{prop:koszulisnatural} as follows. Notice that the natural functor
	$$\restr{\LocSys(-;\scrA)}{\scrS^{\geqslant 1}}\colon\scrS^{\geqslant1}\subseteq\scrS\longrightarrow\LinAPrLU$$is again strongly monoidal. Here, $\scrS^{\geqslant1}$ is seen as a Cartesian symmetric monoidal category thanks to the fact that finite products of connected spaces are again connected (\cite[Corollary $6.5.1.13$]{htt}); in particular, the inclusion $\scrS^{\geqslant1}\subseteq\scrS$ is a strongly monoidal functor. Therefore, $\restr{\LocSys(-;\scrA)}{\scrS^{\geqslant 1}}$ induces a functor at the level of $\Ebb_k$-algebras for every $k \geqslant 0$, that we again denote by$${\LocSys(-;\scrA)}\colon\Alg_{\Ebb_k}{\lp\scrS^{\geqslant1}\rp}\longrightarrow\Alg_{\Ebb_k}{\lp\LinAPrLU\rp}.$$For $k=0$, this is merely a functor from $\scrS^{\geqslant1}_*$ to $\smash{\lp\LinAPrLU\rp}_{\scrA/}$, since $\Ebb_0$-algebras in a monoidal category are simply objects pointed by the monoidal unit, without further requirements (\cite[Proposition $2.1.3.9$]{ha}). In particular, \cref{prop:koszulisnatural} simply states that such functor agrees with
 \begin{equation}
 \label{functor:modulesoverloopspace}
    \LMod_{\Omega_*(-)}{\lp\scrA\rp}\colon\scrS_*^{\geqslant1}\simeq\Alg^{\mathrm{grp}}_{\mathbb{E}_1}{\lp\scrS\rp}\subseteq\Alg_{\Ebb_1}{\lp\scrS\rp}\longrightarrow\smash{\lp\LinAPrLU\rp}_{\scrA/},
 \end{equation}where the first equivalence is given by May's delooping theorem (\cite[Theorem $5.2.6.10$]{ha}). Endowing the category of $\Ebb_k$-algebras in connected spaces with its natural Cartesian monoidal structure (i.e., with the monoidal structure provided by the underlying tensor product inside $\scrS^{\geqslant1}_*$) and endowing the category $\smash{\lp\LinAPrLU\rp}_{\scrA/}$ with its natural symmetric monoidal structure given by the relative tensor product of presentable categories over $\scrA$, it follows that the functor \eqref{functor:modulesoverloopspace} is again strongly monoidal. Indeed, it is a composition of strongly monoidal functors: this follows from the fact that, given any presentably symmetric monoidal category $\scrC$ which is presentably tensored over a presentably symmetric monoidal category $\scrA$, for any associative algebra $A$ in $\scrA$ the assignation $A \mapsto \LMod_A(\scrC)$ is strongly monoidal (\cite[Theorem $4.8.5.16$]{ha}). Since $$\Alg_{\Ebb_k}{\lp\Alg_{\Ebb_0}(\scrC)\rp}\simeq\Alg_{\Ebb_k}{\lp\scrC\rp}$$in virtue of Dunn's Additivity Theorem (\cite[Theorem $5.1.2.2$]{ha}), we deduce that indeed the diagram of functors pictured above exists and is commutative, deducing our claim.
\end{proof}
\begin{corollary}
\label{cor:maincor}
Let $\scrA$ be a presentably symmetric monoidal category, and let $X$ be a simply connected space. Then there are equivalences of categories$$\LocSysCat(X;\scrA)\simeq\LMod_{\Omega_*X}{\lp\LinAPrLU\rp}\simeq\operatorname{Lin}_{\LMod_{\Omega^2_*X}(\scrA)}\PrLU.$$
\end{corollary}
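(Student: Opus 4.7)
The plan is to split the statement into its two equivalences and chain together three of the preceding results. The first equivalence $\LocSysCat(X;\scrA)\simeq\LMod_{\Omega_*X}(\LinAPrLU)$ is immediate from \cref{cor:koszulpresentable}: by definition $\LocSysCat(X;\scrA)=\LocSys(X;\LinAPrLU)$, and the monodromy equivalence (in the cocomplete form of \cref{monodromyeq}) applies since $\LinAPrLU$ is cocomplete.

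For the second equivalence, I would first apply \cref{cor:rectificationofmodules}---itself a specialization of \cref{lemma:rectificationofmodules} to the cocompletely symmetric monoidal category $\LinAPrLU$ whose unit is $\scrA$---to rewrite
$$\LMod_{\Omega_*X}(\LinAPrLU)\simeq \operatorname{Lin}_{\Omega_*X\otimes \scrA}\PrLU,$$
where $\Omega_*X\otimes\scrA$ denotes the tensor of $\Omega_*X$ with $\scrA$ inside $\LinAPrLU$, endowed with its induced $\Ebb_1$-monoidal structure. It then suffices to exhibit an $\Ebb_1$-monoidal equivalence $\Omega_*X\otimes\scrA\simeq \LMod_{\Omega^2_*X}(\scrA)$ inside $\LinAPrLU$, since the assignment $\scrB\mapsto \operatorname{Lin}_{\scrB}\PrLU$ is functorial in $\scrB$ as an $\Ebb_1$-monoidal $\scrA$-linear presentable category.

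To produce this last equivalence, I would invoke \cref{prop:FunandTensor} (together with \cref{porism:keyporism}) to identify $\Omega_*X\otimes\scrA\simeq\LocSys(\Omega_*X;\scrA)$ as presentably $\Ebb_1$-monoidal $\scrA$-linear categories, where the $\Ebb_1$-structure on the right is the reverse Day convolution induced by the $\Ebb_1$-structure on the loop space $\Omega_*X$. Since $X$ is simply connected, $\pi_0(\Omega_*X)\cong\pi_1(X)=0$, so $\Omega_*X$ is a connected $\Ebb_1$-monoid in spaces. Applying \cref{prop:koszulismonoidal} with $k=1$ and $G=\Omega_*X$ then yields an equivalence
$$\LocSys(\Omega_*X;\scrA)\simeq\LMod_{\Omega_*\Omega_*X}(\scrA)=\LMod_{\Omega^2_*X}(\scrA)$$
of presentably $\Ebb_1$-monoidal $\scrA$-linear categories, closing the chain.

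The main obstacle is genuinely the compatibility of $\Ebb_1$-monoidal structures along this chain, rather than any of the underlying equivalences of plain categories. Three candidate structures on $\Omega_*X\otimes\scrA$ must all be matched: the one induced by the $\Ebb_1$-structure on $\Omega_*X$ and the monoidality of $-\otimes\scrA$; the reverse Day convolution on $\LocSys(\Omega_*X;\scrA)$; and the relative tensor product on $\LMod_{\Omega^2_*X}(\scrA)$ coming from the additional $\Ebb_1$-factor on $\Omega^2_*X$ afforded by Dunn additivity and May's delooping theorem applied to the connected $\Ebb_1$-monoid $\Omega_*X$. This triple coincidence is precisely the content of \cref{prop:koszulismonoidal}, so once that result is granted the deduction of the corollary is formal.
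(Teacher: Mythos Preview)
Your proposal is correct and matches the paper's own proof essentially step for step: the first equivalence via \cref{cor:koszulpresentable}, then \cref{cor:rectificationofmodules} (i.e., \cref{lemma:rectificationofmodules} applied to $\LinAPrLU$) to pass to $\operatorname{Lin}_{\Omega_*X\otimes\scrA}\PrLU$, then \cref{prop:FunandTensor} and \cref{prop:koszulismonoidal} (using that $\Omega_*X$ is connected) to identify $\Omega_*X\otimes\scrA\simeq\LMod_{\Omega^2_*X}(\scrA)$ as $\Ebb_1$-monoidal categories. Your emphasis on the $\Ebb_1$-monoidal compatibility as the crux, and its resolution by \cref{prop:koszulismonoidal}, is exactly right.
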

\begin{proof} By  \cref{lemma:rectificationofmodules} and \cref{prop:FunandTensor}, we have an equivalence of categories $$\LMod_{\Omega_*X\otimes \scrA}{\lp\LinAPrLU\rp}\simeq\operatorname{Lin}_{\LocSys{\lp\Omega_*X;\scrA\rp}}\PrLU.$$
Since $X$ is simply connected, $\Omega_*X$ is connected. It follows from \cref{prop:koszulismonoidal} that we have an equivalence of $\Ebb_1$-monoidal categories $$\LocSys{\lp\Omega_*X;\scrA\rp}\simeq\LMod_{\Omega^2_*X}(\scrA)$$which implies an equivalence between their categories of left modules in $\PrLU$. Our statement follows by combining these results with the equivalence provided by \cref{cor:rectificationofmodules}.
\end{proof}
\begin{parag}
\label{parag:hochschild}
In the last paragraph of this section, we  explain the connection between \cref{cor:maincor} and Teleman's notion of topological action from \cite{teleman}. We start by recalling the definition of the Hochschild cohomology of a presentable category $\scrC$ enriched over some presentably symmetric monoidal category $\scrA$. We follow the construction presented in \cite{iwanari}, which is obtained by  combining  various results from \cite[Sections $4.8.5$ and $5.3.2$]{ha}.

Let $\scrA$ be a presentably symmetric monoidal category. By \cite[Theorem $4.8.5.5$]{ha} we have a fully faithful functor
$$\LMod_{(-)}\colon\Alg{\lp\scrA\rp}\longrightarrow{\lp\LinAPrLU\rp}_{\scrA/}$$sending an associative algebra $A$ in $\scrA$ to the category of its left modules $\LMod_A{\lp\scrA\rp}$, with the pointing $\scrA\to\LMod_A{\lp\scrA\rp}$ given by the essentially unique colimit-preserving functor sending $\boldone_{\scrA}$ to $A$. \begin{remark}
In \cite{iwanari}  the author considers  instead the  functor $$\RMod_{(-)}\colon\Alg{\lp\scrA\rp}\longrightarrow{\lp\LinAPrLU\rp}_{\scrA/}.$$ 
 This discrepancy however does not impact the present discussion, as $\LMod_{(-)}$ can be obtained  from $\RMod_{(-)}$ by precomposing with the involution of $\Alg(\scrA)$ sending an associative algebra to its opposite algebra (\cite[Remark $4.1.1.7$]{ha}).
 \end{remark} 
 As we recalled in the proof of \cref{prop:koszulismonoidal}, the functor $\LMod_{(-)}$ is symmetric monoidal, so we can promote it to a functor between categories of $\Ebb_1$-algebras: $$\LMod_{(-)}\colon\Alg{\lp\Alg{\lp\scrA\rp}\rp}\simeq{{\Alg_{\Ebb_2}}{\lp\scrA\rp}}\longrightarrow\Alg{\lp{\lp\LinAPrLU\rp}_{\scrA/}\rp}\simeq\Alg{\lp \LinAPrLU\rp},$$where we used again Dunn's Additivity together with the fact that objects pointed by the unit in any monoidal category $\scrC$ are the same as $\Ebb_0$-algebras in $\scrC$. By the general machinery of \cite[Proposition $2.2.1.1$]{ha}, it follows that the functor $\LMod_{(-)}$ admits a right adjoint$$\Phi\colon\Alg{\lp\LinAPrLU\rp}\longrightarrow{{\Alg_{\Ebb_2}}{\lp\scrA\rp}}$$ that sends  a presentably monoidal and $\scrA$-enriched category $\scrC$ to the $\Ebb_2$-algebra of endomorphisms $\End_{\scrC}{\lp\boldone_{\scrC}\rp}$ in $\scrA$ (\cite[Remark $4.8.5.12$]{ha}).
\end{parag}
\begin{defn}
\label{def:hochschild}
Let $\scrA$ be a presentably symmetric monoidal category, and let $\scrC$ be a presentable category enriched over $\scrA$. Let $\Endin(\scrC)$ be the endomorphism category of $\scrC$ in $\LinAPrLU$ in the sense of \cite[Section $4.7.1$]{ha} -- i.e., it is the presentably $\scrA$-linear category $\FuninL_{\scrA}{\lp\scrC,\hsp\scrC\rp}$, seen as a monoidal category via the composition of functors. Then the \textit{Hochschild cohomology} of $\scrC$ is the $\Ebb_2$-algebra in $\scrA$ 
$$\HH^{\bullet}(\scrC)\coloneqq\Phi{\lp\Endin(\scrC)\rp}.$$
\end{defn}
\begin{proposition}
	\label{lemma:hochschildand2modules}
	Let $\scrA$ be a presentably symmetric monoidal category, let $\scrC$ be a presentable category enriched over $\scrA$, and let $G$ be a connected topological group. Let $\LMod_{\Omega_*G\otimes\boldone_{\scrA}}(\scrA)\text{-}\operatorname{ModStr}(\scrC)$ denote the space of all possible left $\LMod_{\Omega_*G\otimes \boldone_{\scrA}}{\lp\scrA\rp}$-module structures on $\scrC$, and let $G\text{-}\operatorname{ModStr}(\scrC)$ denote the space of all possible left $G$-module structures on $\scrC$.\\
	Then, there are equivalences of spaces
	$$
	\Map_{\Alg_{\Ebb_2}(\scrA)}{\lp\Omega_*G\otimes \boldone_{\scrA},\hsp \HH^{\bullet}(\scrC)\rp}
	\simeq 
	\LMod_{\Omega_*G\otimes\boldone_{\scrA}}(\scrA)\text{-}\operatorname{ModStr}(\scrC)
	 \simeq  
	  G\text{-}\operatorname{ModStr}(\scrC).
	$$
\end{proposition}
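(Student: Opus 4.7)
The plan is to establish the two displayed equivalences independently, pivoting around the endomorphism $\Ebb_2$-algebra $\HH^{\bullet}(\scrC) = \Phi(\Endin(\scrC))$ on one side, and around Koszul duality for the loop space on the other.

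For the right-hand equivalence identifying $G$-module structures with $\LMod_{\Omega_*G\otimes\boldone_{\scrA}}(\scrA)$-module structures on $\scrC$, I would first apply \cref{lemma:rectificationofmodules} at the level of $\LinAPrLU$. Since $\LinAPrLU$ is cocompletely symmetric monoidal with unit $\scrA$, the lemma identifies the category of $G$-modules in $\LinAPrLU$ with that of modules over the $\Ebb_1$-algebra $G\otimes\scrA \in \LinAPrLU$, compatibly with the underlying object; passing to the fiber over $\scrC$ converts the statement into an equivalence between $G\text{-}\operatorname{ModStr}(\scrC)$ and $(G\otimes\scrA)\text{-}\operatorname{ModStr}(\scrC)$. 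By \cref{prop:FunandTensor}, $G\otimes\scrA \simeq \LocSys(G;\scrA)$ as presentably $\Ebb_1$-monoidal categories, and for $G$ connected, \cref{prop:koszulismonoidal} upgrades this to an equivalence of presentably $\Ebb_1$-monoidal categories $\LocSys(G;\scrA)\simeq \LMod_{\Omega_*G}(\scrA)$. A second application of \cref{lemma:rectificationofmodules}, now at the level of $\scrA$ with $\Omega_*G$ regarded as an $\Ebb_2$-monoid in spaces, then identifies $\LMod_{\Omega_*G}(\scrA)\simeq \LMod_{\Omega_*G\otimes\boldone_{\scrA}}(\scrA)$. Composing these equivalences of $\Ebb_1$-monoidal categories and passing to the space of module structures yields the right-hand side.

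For the left-hand equivalence, I would invoke the universal property of the endomorphism category $\Endin(\scrC)$ together with the adjunction $\LMod_{(-)} \dashv \Phi$ recalled in paragraph \ref{parag:hochschild}. A left $\LMod_{\Omega_*G\otimes\boldone_{\scrA}}(\scrA)$-module structure on $\scrC$ is, by the defining universal property of the endomorphism object, the datum of an $\Ebb_1$-algebra map $\LMod_{\Omega_*G\otimes\boldone_{\scrA}}(\scrA) \to \Endin(\scrC)$ in $\LinAPrLU$. Applying the adjunction (which exists because $\LMod_{(-)}$ is fully faithful and symmetric monoidal per paragraph \ref{parag:hochschild}) converts the mapping space to
\[
\Map_{\Alg_{\Ebb_2}(\scrA)}\bigl(\Omega_*G\otimes\boldone_{\scrA},\; \Phi(\Endin(\scrC))\bigr),
\]
which by \cref{def:hochschild} is precisely $\Map_{\Alg_{\Ebb_2}(\scrA)}(\Omega_*G\otimes\boldone_{\scrA},\hsp \HH^{\bullet}(\scrC))$.

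The main technical obstacle is threading the monoidal structures through the chain of equivalences above. Each step must preserve not merely the underlying object but the $\Ebb_1$-monoidal structure, so that module structures transport coherently; and the bookkeeping must distinguish between the Day convolution structure appearing on $\LocSys(G;\scrA)$ (cf.\ \cref{porism:keyporism}) and the relative tensor product structure on $\LMod_{\Omega_*G}(\scrA)$, both ultimately encoded by the $\Ebb_2$-structure on $\Omega_*G$. Fortunately \cref{prop:koszulismonoidal} has already been proved at exactly this level of monoidal refinement, so the remaining work consists in checking strong monoidality of each functor in the chain and in verifying that the fully faithfulness of $\LMod_{(-)}\colon \Alg_{\Ebb_2}(\scrA) \to \Alg(\LinAPrLU)$ converts an equivalence of module structures into the asserted equivalence of $\Ebb_2$-mapping spaces.
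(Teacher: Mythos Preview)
Your proposal is correct and follows essentially the same approach as the paper. The only difference is that for the right-hand equivalence the paper simply cites \cref{cor:maincor} applied to the simply connected space $X\coloneqq\mathbf{B}G$, whereas you unpack that corollary into its constituent ingredients (\cref{lemma:rectificationofmodules}, \cref{prop:FunandTensor}, \cref{prop:koszulismonoidal}); for the left-hand equivalence both you and the paper use the $\LMod_{(-)}\dashv\Phi$ adjunction together with the universal property of $\Endin(\scrC)$ (the paper cites \cite[Corollary $4.7.1.41$]{ha} for the latter).
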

\begin{proof}
Let us start from the equivalence$$\Map_{\Alg_{\Ebb_2}(\scrA)}{\lp\Omega_*G\otimes \boldone_{\scrA},\hsp \HH^{\bullet}(\scrC)\rp}
	\simeq 
	\LMod_{\Omega_*G\otimes\boldone_{\scrA}}(\scrA)\text{-}\operatorname{ModStr}(\scrC).$$The adjunction between 
$ 
\LMod_{(-)}$  and $\Phi
$ yields an equivalence of spaces$$\Map_{\Alg_{\Ebb_2}(\scrA)}{\lp\Omega_*G\otimes \boldone_{\scrA},\hsp \HH^{\bullet}(\scrC)\rp}\simeq \Map_{\Alg{\lp\LinAPrLUtiny\rp}}{\lp\LMod_{\Omega_*G\otimes \boldone_{\scrA}}{\lp\scrA\rp},\hsp\Endin(\scrC)\rp}.$$But the right hand side is equivalent to the space of left $\LMod_{\Omega_*G\otimes \boldone_{\scrA}}{\lp\scrA\rp}$-module structures on $\scrC$, in virtue of \cite[Corollary $4.7.1.41$]{ha}. Then, the equivalence$$\LMod_{\Omega_*G\otimes\boldone_{\scrA}}(\scrA)\text{-}\operatorname{ModStr}(\scrC)
	 \simeq  
	  G\text{-}\operatorname{ModStr}(\scrC)$$follows immediately from the second equivalence in \cref{cor:maincor} applied to the simply connected space $X\coloneqq\mathbf{B}G$.
\end{proof}
\cref{lemma:hochschildand2modules} states that, for any presentably symmetric monoidal category $\scrA$ and any presentable category $\scrC$ enriched over $\scrA$, giving $\scrC$ an action of the topological monoid $G$ on $\scrC$ is equivalent to giving $\scrC$ a $\LMod_{\Omega_*G\otimes\boldone_{\scrA}}{\lp\scrA\rp}$-module structure; in turn, this is equivalent to providing an $\Ebb_2$-algebra map $$\Omega_*G\otimes\boldone_{\scrA}\longrightarrow\operatorname{HH}^{\bullet}(\scrC).$$This can be interpreted as a generalization, rephrased in purely $\infinity$-categorical terms, of the following result due to Teleman.
\begin{theorem}[{\cite[Theorem $2.5$]{teleman}}]
\label{thm:teleman}
Topological actions of a connected group $G$ on a differential graded category $\scrC$ which is linear over some base commutative ring $\Bbbk$ are completely captured, up to contractible choices, by the induced $\Ebb_2$-algebra morphisms$$\mathrm{C}_{\bullet}{\lp\Omega_*G;\Bbbk\rp}\longrightarrow\operatorname{HH}^{\bullet}(\scrC)$$where the source is simply the algebra of chains of $\Omega_*G$ with coefficients in $\Bbbk$, endowed with the Pontrjagin product, and the target is the Hochschild cohomology of the differential graded category $\scrC$.
\end{theorem}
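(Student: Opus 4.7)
The plan is to deduce \cref{thm:teleman} as an essentially immediate specialization of \cref{lemma:hochschildand2modules}, which has already been proven as part of the discussion preceding the statement. The key point is that Teleman's setting fits perfectly into the framework we have developed by choosing the base of enrichment appropriately.

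First I would set $\scrA \coloneqq \Mod_{\Bbbk}$, the presentably symmetric monoidal category of $\Bbbk$-module spectra, where $\Bbbk$ is an ordinary commutative ring viewed as a discrete $\Einf$-ring. Under this choice, presentable $\scrA$-enriched categories are precisely the presentable $\Bbbk$-linear stable categories, which are the $\infinity$-categorical incarnation of $\Bbbk$-linear differential graded categories. The monoidal unit $\boldone_{\scrA}$ is $\Bbbk$ itself, and by \cref{remark:chainscochains} we have a canonical identification
\[
\Omega_*G \otimes \boldone_{\scrA} \simeq \Sigma^{\scriptstyle\infty}_+\Omega_*G \wedge \Bbbk \simeq \mathrm{C}_{\bullet}(\Omega_*G;\Bbbk)
\]
as $\Ebb_2$-algebras in $\Mod_{\Bbbk}$, where the $\Ebb_2$-structure on the right-hand side is induced by the Pontrjagin product on the loop space of $G$ (which is a connected $\Ebb_2$-monoid because $G$ is a topological group, i.e., a grouplike $\Ebb_1$-monoid whose classifying space $\mathbf{B}G$ is simply connected).

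Second, I would invoke \cref{lemma:hochschildand2modules} applied to this choice of $\scrA$ and to $\scrC$ viewed as a presentable $\Bbbk$-linear category. The proposition gives a chain of equivalences of spaces
\[
\Map_{\Alg_{\Ebb_2}(\Mod_{\Bbbk})}\!\lp \mathrm{C}_{\bullet}(\Omega_*G;\Bbbk),\hsp \HH^{\bullet}(\scrC) \rp
\simeq
\LMod_{\mathrm{C}_{\bullet}(\Omega_*G;\Bbbk)}(\Mod_{\Bbbk})\text{-}\operatorname{ModStr}(\scrC)
\simeq
G\text{-}\operatorname{ModStr}(\scrC),
\]
where the last space parametrizes left $G$-module structures on $\scrC$ inside $\LinkPrLU$, i.e., topological actions of $G$ on $\scrC$ in the intrinsic $\infinity$-categorical sense. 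Taking $\pi_0$ on both sides recovers Teleman's claim that $G$-actions on $\scrC$ correspond, up to contractible choices, to $\Ebb_2$-algebra maps $\mathrm{C}_{\bullet}(\Omega_*G;\Bbbk) \to \HH^{\bullet}(\scrC)$.

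The main conceptual obstacle, which is why the statement is not entirely tautological, is reconciling the two potential notions of ``topological $G$-action on a dg category'': on one hand Teleman's original ansatz from \cite{teleman}, and on the other hand the natural $\infinity$-categorical notion of a left $G$-module inside $\LinkPrLU$, obtained via the canonical tensoring of any cocomplete category over $\scrS$. Our point of view is that the latter is the correct formalization, and the content of the theorem is then the identification of its classifying space with the mapping space of $\Ebb_2$-algebras above; this is exactly what \cref{lemma:hochschildand2modules} provides. Accordingly, once this conceptual identification is made, no further computation is required, and the proof reduces to the application of \cref{lemma:hochschildand2modules} together with \cref{remark:chainscochains}.
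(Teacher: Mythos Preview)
Your proposal is correct and follows essentially the same route as the paper: specialize \cref{lemma:hochschildand2modules} to $\scrA=\Mod_{\Bbbk}$, invoke \cref{remark:chainscochains} to identify $\Omega_*G\otimes\boldone_{\scrA}$ with $\mathrm{C}_{\bullet}(\Omega_*G;\Bbbk)$, and conclude. The only point the paper makes slightly more explicit is the passage from differential graded $\Bbbk$-linear categories to presentable $\Bbbk$-linear categories, for which it cites Cohn's result that dg categories are, up to Morita equivalence, the same as compactly generated $\Bbbk$-linear presentable categories; you assert this identification informally, but the argument is otherwise identical.
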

Indeed, when $\scrA\coloneqq\Mod_{\Bbbk}$ is the presentable category of $\Bbbk$-modules over a classical commutative ring $\Bbbk$, we have already seen that the $\Ebb_2$-algebra $\Omega_*G\otimes\boldone_{\scrA}$ boils down to the $\Ebb_2$-algebra of $\Bbbk$-chains on $\Omega_*G$ (\cref{remark:chainscochains}). Since differential graded $\Bbbk$-linear categories are the same as compactly generated $\Bbbk$-linear presentable categories up to Morita equivalence (\cite[Corollary $5.7$]{cohn}), \cref{lemma:hochschildand2modules} implies \cref{thm:teleman}.\\

We include for completion also a neat characterization of invertible objects inside the symmetric monoidal category $\LocSysCat(X;\Bbbk)$, when $X$ is assumed to be simply connected and $\Bbbk$ to be an algebraically closed field. We start with the following easy remark.
\begin{proposition}
\label{prop:invertibleinlocsyscat}
Let $X$ be a connected space, and let $\eta\colon \left\{*\right\}\to X$ be any choice of a base point. Let $\Bbbk$ be any commutative ring spectrum. Then an object $\scrF$ inside $\LocSysCat(X;\Bbbk)$ is invertible if and only if its stalk at the base point $\scrF_{\eta}$ is invertible in $\LinkPrLU$.
\end{proposition}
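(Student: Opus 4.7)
The first move is to fix the symmetric monoidal structure on $\LocSysCat(X;\Bbbk)\simeq\Fun(X,\LinkPrLU)$. Since $X$ is merely a space (a connected $\infty$-groupoid) with no further structure, the relevant monoidal structure is the pointwise one inherited from the symmetric monoidal structure on $\LinkPrLU$: the tensor unit is the constant local system with value $\boldone_{\LinkPrLUtiny}$, and the tensor product is computed stalkwise. (Equivalently, this follows from viewing $\LocSys(X;\LinkPrLU)$ as $X\otimes\LinkPrLU$ in the cocomplete setting, by the analogue of \cref{prop:FunandTensor} for cocompletely symmetric monoidal categories, together with \cref{porism:keyporism}.) With this structure understood, restriction along $\eta\colon\{*\}\hookrightarrow X$ yields a strongly symmetric monoidal evaluation functor $(-)_\eta\colon\LocSysCat(X;\Bbbk)\to\LinkPrLU$, which immediately gives the forward implication since symmetric monoidal functors preserve invertible objects.

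For the reverse implication, suppose $\scrF_\eta$ is invertible in $\LinkPrLU$. I first upgrade this to the statement that \emph{every} stalk of $\scrF$ is invertible: because $X$ is a connected $\infty$-groupoid, for any point $x\in X$ I can choose a path $\gamma\colon\eta\to x$, and since $\scrF$ is a functor from an $\infty$-groupoid it sends $\gamma$ to an equivalence $\scrF(\gamma)\colon\scrF_\eta\xrightarrow{\simeq}\scrF_x$ in $\LinkPrLU$. Invertibility being preserved by equivalences, each $\scrF_x$ is invertible. This means that $\scrF$ factors through the Picard subspace $\mathrm{Pic}(\LinkPrLU)\subseteq(\LinkPrLU)^{\simeq}$ of invertible objects, which is a grouplike $\Einf$-space and in particular carries a natural inversion autoequivalence $(-)^{-1}$.

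Composing $\scrF\colon X\to\mathrm{Pic}(\LinkPrLU)$ with this inversion produces a new local system $\scrF^{\vee}\colon X\to\LinkPrLU$ whose pointwise tensor product with $\scrF$ is pointwise equivalent to the unit; the pointwise nature of the tensor product on $\LocSysCat(X;\Bbbk)$ then promotes this to a global equivalence $\scrF\otimes\scrF^{\vee}\simeq\boldone$, so $\scrF$ is invertible. I do not expect a substantial obstacle here: the single delicate point is confirming that the monoidal structure on $\LocSysCat(X;\Bbbk)$ used throughout the paper really is the pointwise one (so that invertibility agrees with stalkwise invertibility), but this is forced by the compatibility of evaluation at a point with the symmetric monoidal structure.
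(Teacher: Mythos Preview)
Your argument is correct, but it follows a different route from the paper's. The paper proceeds via the characterization ``invertible $=$ fully dualizable with evaluation and coevaluation being equivalences'': it first uses \cite[Lemma $1.4.6$]{1affineness} to identify fully dualizable objects in $\LocSysCat(X;\Bbbk)\simeq\lim_{x\to X}\LinkPrLU$ as those whose stalks are fully dualizable, and then checks that the evaluation/coevaluation maps are equivalences by invoking that $\eta^\ast$ is \emph{conservative} (since under \cref{cor:koszulpresentable} it corresponds to forgetting the $\Omega_*X$-action). By contrast, you bypass dualizability entirely: you observe that connectedness of $X$ forces every stalk to be invertible, factor $\scrF$ through the grouplike $\Einf$-space $\mathrm{Pic}(\LinkPrLU)$, and build the inverse directly as $\scrF$ composed with the inversion autoequivalence, using that $\mu\circ(\mathrm{id},\mathrm{inv})\simeq\mathrm{const}_{\boldone}$ in $\mathrm{Pic}$ to obtain the \emph{global} equivalence $\scrF\otimes\scrF^{\vee}\simeq\boldone$ (not merely a stalkwise one).

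Your approach is more elementary and self-contained for this particular statement; the paper's approach, on the other hand, isolates the conservativity of $\eta^\ast$ as the key structural fact, which is then reused verbatim in the higher-categorical analogue (see the proof of \cref{highertele}). Both are valid; the only point worth making explicit in your write-up is that the homotopy $\mu\circ(\mathrm{id},\mathrm{inv})\simeq\mathrm{const}_{\boldone}$ is part of the grouplike $\Einf$-structure on $\mathrm{Pic}$, so that the equivalence $\scrF\otimes\scrF^{\vee}\simeq\boldone$ is indeed coherent over $X$ and not just pointwise.
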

\begin{proof}
Recall that an object in a monoidal category $\scrC^{\otimes}$ is invertible if it is fully dualizable and both the evaluation and the coevaluation map are equivalences. In virtue of \cite[Lemma $1.4.6$]{1affineness}, the fully dualizable objects inside the symmetric monoidal category
\[
\LocSysCat(X;\Bbbk)\simeq\lim_{x\to X}\LinkPrLU
\]
are precisely the objects which are fully dualizable when projecting to each copy of $\LinkPrLU$. Under the above equivalence, the projection corresponds to taking the stalk at a point $x\to X$. Since $X$ is connected, a local system of categories $\scrF$ over $X$ is fully dualizable if and only if the stalk $\scrF_{\eta}$ is fully dualizable as a presentably $\Bbbk$-linear category. Moreover, since $\eta^\ast$ is symmetric monoidal, we know that for a fully dualizable local system of categories $\scrF$ the dual in $\LinkPrLU$ of the stalk $\scrF_{\eta}$ is the stalk at $\eta$ of the dual $\scrF^{\vee}$ in $\LocSysCat(X;\Bbbk)$.

Thus, we are left to prove that the evaluation and the coevaluation morphisms that testify the dualizability of $\scrF$ are equivalences if and only if the functors induced at the stalk $\eta$ are equivalences. Since the pullback along $\eta$ is functorial, the "only if" direction is obvious. On the other hand, since $X$ is connected, the functor $\eta^\ast$ is conservative (it corresponds to forgetting the $\Omega_*X$-action, under the equivalence of \cref{cor:koszulpresentable}), hence we deduce also the "if" direction.
\end{proof}
\begin{parag}
When $X$ is connected and $\Bbbk$ is a commutative ring spectrum, \cref{prop:invertibleinlocsyscat} implies that an object in the subgroupoid
\[
\lp\LocSysCat(X;\Bbbk)^{\mathrm{inv}}\rp^{\simeq}\subseteq\LocSysCat(X;\Bbbk)^{\simeq}
\]
spanned by all invertible local systems of $\Bbbk$-linear categories on $X$ consists of the datum of an invertible presentably $\Bbbk$-linear category $\scrC$ together with a $\Omega_*X$-action on $\scrC$. In virtue of \cite[Theorem $3.15$ and Proposition $7.3$]{antieau_gepner_brauer_groups}, we deduce that the connected components of $\smash{\lp\LocSysCat(X;\Bbbk)^{\mathrm{inv}}\rp^{\simeq}}$ are equivalently described as classes in the Brauer group of $\Bbbk$
\[
\mathrm{Br}(\Bbbk)\simeq\pi_0{\lp\lp\LinkPrLUinv\rp^{\simeq}\rp}
\]
together with all possible $\Omega_*X$-actions over each of them.
\end{parag}
\begin{parag}
\label{parag:brauersimply}
When $X$ is moreover \textit{simply} connected we can apply the machinery of \cref{cor:maincor} and of \cref{thm:teleman} to deduce that an invertible local system of $\Bbbk$-linear categories on $X$ consists of the datum of an equivalence class in the Brauer group $[\scrC]\simeq[\Mod_A]\in\mathrm{Br}(\Bbbk)$, where $A$ is an Azumaya algebra over $\Bbbk$, with a morphism of $\Ebb_2$-algebras $\mathrm{C}_{\bullet}(\Omega_*^2X;\Bbbk)\to\mathrm{HH}^{\bullet}(\scrC)$. In particular, suppose that the Brauer group $\mathrm{Br}(\Bbbk)$ is trivial. This happens for every algebraically closed field (\cite[Proposition $1.9$]{toen_azumaya}) and for every commutative ring spectrum whose $\pi_0$ is either $\ZZ$ or the ring of Witt vectors $\mathbb{W}_p$ over $\mathbb{F}_p$ (\cite[Theorem $7.16$]{antieau_gepner_brauer_groups}); in particular, this holds also for the sphere spectrum. Then the invertible objects of $\LocSysCat(X;\Bbbk)$ consists of all possible $\Omega_*X$-action on the category of modules over the essentially unique Azumaya algebra over $\Bbbk$ up to Morita equivalence -- that is, $\Bbbk$ itself. Together with \cref{lemma:hochschildand2modules}, we obtain the following.
\end{parag}
\begin{proposition}
\label{prop:invertibleobjectsoflocsyscat}
Let $X$ be a simply connected space and let $\Bbbk$ be an algebraically closed field. Then we have an isomorphism of abstract groups
\[
\pi_0{\lp\lp\LocSysCat(X;\Bbbk)^{\mathrm{inv}}\rp^{\simeq}\rp}\cong\Hom_{\mathrm{Grp}}{\lp \pi_2(X),\hsp \Bbbk^{\times}\rp}
\]
between the group of equivalence classes of invertible local systems of $\Bbbk$-linear categories on $X$, and the group of multiplicative characters of $\pi_2(X)$.
\end{proposition}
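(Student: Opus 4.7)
The plan is to combine the reduction carried out in \cref{parag:brauersimply} with \cref{lemma:hochschildand2modules}, and then perform a classical delooping/cohomology computation. Since $\Bbbk$ is an algebraically closed field its Brauer group is trivial, so \cref{parag:brauersimply} identifies $\pi_0\lp\lp\LocSysCat(X;\Bbbk)^{\mathrm{inv}}\rp^{\simeq}\rp$ with the set of connected components of the space of left $\Omega_*X$-module structures on $\Mod_\Bbbk$. Applying \cref{lemma:hochschildand2modules} with $\scrA=\scrC=\Mod_\Bbbk$ and $G=\Omega_*X$ (which is connected because $X$ is simply connected), and noting that $\HH^{\bullet}(\Mod_\Bbbk)\simeq \Bbbk$ because $\Mod_\Bbbk$ is the monoidal unit of $\LinkPrLU$, this space of actions becomes naturally equivalent to
\[
\Map_{\Alg_{\Ebb_2}(\Mod_\Bbbk)}\lp\mathrm{C}_\bullet(\Omega_*^2X;\Bbbk),\hsp \Bbbk\rp.
\]

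The core step is to compute $\pi_0$ of the above mapping space. Because $\mathrm{C}_\bullet(\Omega_*^2X;\Bbbk)\simeq\Sigma^{\scriptstyle\infty}_+\Omega_*^2X\wedge\Bbbk$ is the image of the grouplike $\Ebb_2$-space $\Omega_*^2X$ under the symmetric monoidal cocontinuous functor $\scrS\to\Mod_\Bbbk$, the adjunction with the lax symmetric monoidal right adjoint $\Omega^{\scriptstyle\infty}$ identifies this with $\Map_{\Alg_{\Ebb_2}(\scrS)}(\Omega_*^2X,\hsp \Omega^{\scriptstyle\infty}\Bbbk)$. Since the source is grouplike, every such $\Ebb_2$-algebra morphism must factor through the maximal grouplike subobject $\mathrm{GL}_1(\Bbbk)\subseteq\Omega^{\scriptstyle\infty}\Bbbk$, giving an equivalence with $\Map_{\Alg_{\Ebb_2}^{\mathrm{grp}}(\scrS)}(\Omega_*^2X,\hsp \mathrm{GL}_1(\Bbbk))$. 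Now the iterated bar construction $\mathbf{B}^2$ provides an equivalence between grouplike $\Ebb_2$-spaces and pointed $1$-connected spaces. Because $X$ is simply connected its $1$-connective cover agrees with $X$ itself, so $\mathbf{B}^2\Omega_*^2X\simeq X$; and because $\Bbbk$ is a discrete field, $\mathrm{GL}_1(\Bbbk)$ is a discrete $\Ebb_\infty$-space with $\pi_0=\Bbbk^{\times}$, so $\mathbf{B}^2\mathrm{GL}_1(\Bbbk)\simeq K(\Bbbk^{\times},2)$. Consequently the mapping space is equivalent to $\Map_{\scrS_*}(X,\hsp K(\Bbbk^{\times},2))$, whose $\pi_0$ is $H^2(X;\hsp \Bbbk^{\times})$.

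Finally, since $X$ is simply connected, the Hurewicz theorem gives $H_2(X;\ZZ)\cong\pi_2(X)$, and the universal coefficient theorem (with vanishing $\mathrm{Ext}^1$-term since $H_1(X;\ZZ)=0$) yields
\[
H^2(X;\hsp \Bbbk^{\times})\cong\Hom_{\mathrm{Grp}}\lp\pi_2(X),\hsp\Bbbk^{\times}\rp,
\]
which is the desired isomorphism. The main technical subtlety is the iterated delooping step in the second paragraph: one must carefully invoke May's recognition theorem (or, equivalently, Dunn additivity combined with the identification of grouplike $\Ebb_k$-spaces with $k$-fold loop spaces) to justify that $\Ebb_2$-algebra maps between the relevant grouplike objects correspond bijectively to pointed maps of their double bar constructions, and one must also verify that the factorization through $\mathrm{GL}_1(\Bbbk)$ holds at the level of mapping spaces and not merely on components.
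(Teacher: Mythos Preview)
Your argument is correct, and after the common reduction (via \cref{parag:brauersimply} and \cref{lemma:hochschildand2modules}) to computing $\pi_0$ of $\Map_{\Alg_{\Ebb_2}(\Mod_\Bbbk)}\lp\mathrm{C}_\bullet(\Omega_*^2X;\Bbbk),\hsp\Bbbk\rp$ it takes a genuinely different route from the paper. The paper stays on the algebraic side: it exploits that $\Bbbk$ lies in the heart and that over a field both $\tau_{\leqslant 0}$ and the inclusion $\Mod_\Bbbk^\heartsuit\hookrightarrow(\Mod_\Bbbk)_{\geqslant 0}$ are strong monoidal, so that the mapping space factors through $\Alg_{\Ebb_2}(\Mod_\Bbbk^\heartsuit)\simeq\CAlg_\Bbbk^{\mathrm{disc}}$ and becomes $\Hom_{\CAlg_\Bbbk^{\mathrm{disc}}}(\Bbbk[\pi_2(X)],\Bbbk)$, to which the group-ring/units adjunction applies directly. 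You instead transport the problem to $\scrS$ via the $(\Sigma^{\scriptstyle\infty}_+\wedge\Bbbk)\dashv\Omega^{\scriptstyle\infty}$ adjunction, factor through $\mathrm{GL}_1(\Bbbk)$, and then apply the recognition principle to identify the answer with $\pi_0\Map_{\scrS_*}(X,K(\Bbbk^\times,2))\cong H^2(X;\Bbbk^\times)$, finishing with Hurewicz and universal coefficients. Your approach is more topological and in fact yields the intermediate identification with $H^2(X;\Bbbk^\times)$ for any discrete commutative ring $\Bbbk$ with trivial Brauer group, whereas the paper's $t$-structure argument genuinely uses that $\Bbbk$ is a field; on the other hand the paper's route is shorter and avoids the delooping machinery entirely. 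The two subtleties you flag are both harmless: the factorization through $\mathrm{GL}_1(\Bbbk)$ holds at the level of mapping spaces because $\mathrm{GL}_1(\Bbbk)\subseteq\Omega^{\scriptstyle\infty}\Bbbk$ is a union of path components, and the delooping step is exactly the equivalence $\Alg^{\mathrm{grp}}_{\Ebb_2}(\scrS)\simeq\scrS_*^{\geqslant 2}$ of \cite[Theorem $5.2.6.10$]{ha}.
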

\begin{proof}
In virtue of the discussion in Paragraph \ref{parag:brauersimply}, we only need to characterize the set of connected components of the space $\Map_{\Alg_{\Ebb_2}(\Mod_{\Bbbk})}{\lp\mathrm{C}_{\bullet}(\Omega^2_*X;\Bbbk),\hsp\mathrm{HH}^{\bullet}(\Bbbk)\rp}.$ Notice that the Hochschild cohomology of $\Mod_{\Bbbk}$  computes the ordinary Hochschild cohomology of $\Bbbk$, which is $$\operatorname{HH}^{\bullet}(\Bbbk)\coloneqq\Mapin_{\Bbbk\otimes\Bbbk}(\Bbbk,\Bbbk)\simeq\Bbbk.$$So we are left to study the mapping space as $\Ebb_2$-algebras from $\mathrm{C}_{\bullet}(\Omega^2_*X;\Bbbk)$ and $\Bbbk$. We claim that maps of $\Ebb_2$-algebras from a connective $\Ebb_2$-algebra $A$ to a discrete algebra $R$ over a field $\Bbbk$ always factor through maps of $\Ebb_2$-algebras from $\pi_0A$. Indeed, the adjunction
\[
\adjunction{\tau_{\leqslant0}}{\Mod_{\Bbbk}}
  {\lp\Mod_{\Bbbk}\rp_{\leqslant0}}{\iota_{\leqslant0}}
\]
restricts to an adjunction
\[
\adjunction{\tau^{\heartsuit}_{\leqslant0}}{\Mod_{\Bbbk}\bigcap\lp\Mod_{\Bbbk}\rp_{\geqslant0}=\lp\Mod_{\Bbbk}\rp_{\geqslant0}}{\lp\Mod_{\Bbbk}\rp_{\leqslant0}\bigcap\lp\Mod_{\Bbbk}\rp_{\geqslant0}=\Mod_{\Bbbk}^{\heartsuit}}{\iota_{\leqslant0}^{\heartsuit}}.
\]
The right adjoint is strongly monoidal, because over a field every object is flat; the left adjoint is strongly monoidal as well, because of K\"{u}nneth formula. So we can safely apply \cite[Corollary $7.3.2.12$ and Remark $7.3.2.13$]{ha} to deduce the existence of an adjunction
\[
\adjunction{\tau^{\heartsuit}_{\leqslant0}}{\Alg_{\scrO}\lp\lp\Mod_{\Bbbk}\rp_{\leqslant0}\rp}{\Alg_{\scrO}{\lp\Mod_{\Bbbk}^{\heartsuit}\rp}}{\iota_{\leqslant0}^{\heartsuit}},
\]
for any operad $\scrO$. In the case $\scrO=\Ebb_2$ we obtain
\begin{align*}
\Map_{\Alg_{\Ebb_2}(\Mod_{\Bbbk})}{\lp\mathrm{C}_{\bullet}(\Omega^2_*X;\Bbbk),\hsp\mathrm{HH}^{\bullet}(\Bbbk)\rp}&\simeq \Map_{\Alg_{\Ebb_2}\lp(\Mod_{\Bbbk})_{\geqslant0}\rp}{\lp\mathrm{C}_{\bullet}(\Omega^2_*X;\Bbbk),\hsp\Bbbk\rp}\\&\simeq\Map_{\Alg_{\Ebb_2}{\lp\Mod_{\Bbbk}^{\heartsuit}\rp}}{\lp\tau_{\leqslant0}\mathrm{C}_{\bullet}(\Omega^2_*X;\Bbbk),\hsp\Bbbk\rp}.
\end{align*}
Notice that $\Mod_{\Bbbk}^{\heartsuit}$ is the ordinary discrete category of $\Bbbk$-modules, so $\Ebb_2$-algebras are the same as commutative ($\Einf$-)algebras (\cite[Corollary $5.1.1.7$]{ha}). On the other hand, $\tau_{\leqslant0}\mathrm{C}_{\bullet}(\Omega^2_*X;\Bbbk)$ selects the $0$-th homology of $\mathrm{C}_{\bullet}(\Omega^2_*X;\Bbbk)$, which is
\[
\mathrm{H}_0(\Omega^2_*X;\Bbbk)\cong\Bbbk[\pi_0\Omega^2_*X]\cong\Bbbk[\pi_2(X)].
\]
So we are simply looking at the space of maps from $\Bbbk[\pi_2(X)]$ to $\Bbbk$ seen as discrete commutative algebras, which is a discrete set. In particular, using the group ring-group of units adjunction between the category of discrete algebras $\Alg_{\Bbbk}^{\mathrm{disc}}$ and the category of discrete groups $\mathrm{Grp}$, which restricts to the subcategories of commutative algebras and abelian groups, we have
\[
\Hom_{\CAlg_{\Bbbk}^{\mathrm{disc}}}{\lp\Bbbk[\pi_2(X)],\hsp\Bbbk\rp}\cong\Hom_{\mathrm{Grp}}{\lp\pi_2(X),\hsp\Bbbk^{\times}\rp}.
\]
\end{proof}

\section{Local systems of higher categories} 
\label{sec:highermonodromydata}
\numberwithin{equation}{subsection}
In the previous sections we  presented several descriptions of  local systems of (enriched) categories in terms of  monodromy data, culminating in \cref{monodromyeq} and  \cref{cor:maincor}. Those results, however, were all formulated as equivalences of 1-categories, whereas local systems of categories naturally form   a 2-category. In \cref{sec:inftytwo}  we explain how to lift our results   
  to equivalences of appropriate 2-categories. This is the content of \cref{thm:infinitytwothm}. Our strategy can be summarized as follows: we shall prove that the equivalences of \cref{monodromyeq} and \cref{cor:maincor} can be regarded as equivalences between \textit{$\CatVinfty$-tensored categories}. Indeed, $n$-categories can be modeled by $\Theta_n$-spaces (\cite{thetaspace,thetaspaceerr}), which in turn are enriched over $\Theta_{n-1}$-spaces by  \cite{rezk1,rezk2}.  
  One of the key technical inputs in the proof will be provided by the theory of enriched $\infty$-categories developed by Gepner--Haugseng  \cite{gepnerhaugseng} and others.

Next, in \cref{sec:localsystemncat}, we will study  local systems of \emph{presentable} $n$-categories. Our main results in this direction is \cref{conj:infinityn} which gives a description of local systems of higher presentable categories in terms of \emph{higher monodromy data}, i.e. actions of iterated based loop spaces.   While all the relevant tools and concepts which are needed to state and prove \cref{thm:infinitytwothm} are well known among category theorists, this is not the case for \cref{conj:infinityn}. In particular, we shall carefully revisit the theory of \textit{presentable $n$-categories} and \textit{higher $n$-categories of modules} introduced in \cite{stefanich2020presentable}. 
Finally,   we will also explain how to lift Teleman's picture of topological actions in terms of Hochschild cohomology to the $n$-categorical setting. 

We remark that \cref{conj:infinityn} subsumes  \cref{thm:infinitytwothm}, and its proof is logically independent from it. However the proof of \cref{conj:infinityn} relies on the same key steps as \cref{thm:infinitytwothm}, which we believe are more easily grasped in the more familiar setting of ordinary presentable categories. Thus the proof of \cref{thm:infinitytwothm}, which is presented in  \cref{sec:inftytwo}, should be viewed as a practice run of our general argument, which will then be fully expounded in   \cref{sec:localsystemncat}.  
We start by fixing notations.

\begin{notation}
\label{notation:highercat}
In the rest of this paper we shall often work with higher categories.  Given an $n$-category $\scrC$ there are two basic operations we can perform. If $m < n$, we can consider the underlying $m$-category of $\scrC$, by discarding all non-invertible $k$-simplices such that $k > m$; viceversa, if $m>n$, we can promote $\scrC$ to a $m$-category such that all its $k$-simplices, for $k >n$, are invertible (for example, we will sometimes consider a space $X$ as an $n$-category). In order to avoid confusion  we shall adopt the following \emph{non standard} notations.

\begin{enumerate}
\item For any $n>1$, an $n$-category $\scrC$ admitting non-invertible $n$-simplices will be denoted as $n \bm{ \scrC}$ in order to highlight its ``categorical height''. 
    \item For $1<m \leqslant n$, the $m$-category  obtained by $n \bm{\scrC}$ discarding non-invertible $k$-simplices for all $m<k\leqslant n$ will be denoted as $m \bm{\scrC}$. For instance, we shall denote the underlying $n$-category of the $(n+1)$-category $(n+1)\CatVtwoinfty[n]$ as $n\CatVinftytwo[n]$. When $m=1$, we shall drop the $1$ and simply write $\scrC$: in particular, $\CatVinfty[n]$ is the $1$-category of $n$-categories.
    \item     If $m\leqslant n$, then any $m$-category seen as an $n$-category will still be denoted in the same way, e.g. $m\bm{\scrC}$. For example, any space $X$ seen as a trivial $n$-category will still be denoted as $X$ (instead of $\iota_n\cdots\iota_1X$, which is a convention  sometimes adopted in the literature). 
\end{enumerate}
For a precise technical formulation of the above constructions, we refer the reader to \cref{remark:adjunction}.
\end{notation}
\subsection{\texorpdfstring{$2$}{2}-categorical equivalences}
\label{sec:inftytwo}
We start by explaining a  technical construction due to Gepner--Haugseng, which allows us to change enrichment along lax monoidal functors. Next, using  this, we will show that  the categories appearing in the statements of \cref{monodromyeq} and  \cref{cor:maincor}  admit natural $2$-categorical enhancements. This will be explained in \cref{constr:enrichment1}, \cref{constr:enrichment2} and \cref{constr:enrichment3} below. The main result is \cref{thm:infinitytwothm}, that shows that the equivalences proved in  \cref{monodromyeq} and  \cref{cor:maincor} can be promoted to equivalences of 2-categories.

\begin{proposition}[{Change of base enrichment, \cite[Corollary $5.7.6$]{gepnerhaugseng}}]
	\label{prop:basechange}
Let $\mathscr{V}$ and $\mathscr{W}$ be two monoidal categories, and let $F\colon\mathscr{V}\to\mathscr{W}$ be a lax monoidal functor. Then there is a canonical functor $$\operatorname{Lin}_{\mathscr{V}}\CatVinfty\longrightarrow\operatorname{Lin}_{\mathscr{W}}\CatVinfty$$from the category of $\mathscr{V}$-enriched categories to the category of $\mathscr{W}$-enriched categories.
\end{proposition}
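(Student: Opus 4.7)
The plan is to invoke the operadic description of enriched $\infty$-categories due to Gepner--Haugseng. In their formalism, a $\mathscr{V}$-enriched $\infty$-category with space of objects $X$ is identified with an algebra for a certain ``associative $X$-operad'' $\mathrm{Assoc}_X^{\otimes}$ valued in $\mathscr{V}^{\otimes}$, and $\operatorname{Lin}_{\mathscr{V}}\CatVinfty$ is assembled from these algebras as $X$ varies, together with the appropriate notion of $\mathscr{V}$-functor (which permits a change of the space of objects).

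First, I would recall the general fact that for any $\infty$-operad $\mathcal{O}^{\otimes}$, a morphism of $\infty$-operads $F^{\otimes}\colon \mathscr{V}^{\otimes} \to \mathscr{W}^{\otimes}$---equivalently, a lax monoidal functor $F\colon \mathscr{V} \to \mathscr{W}$---induces by postcomposition a functor $\operatorname{Alg}_{\mathcal{O}}(F)\colon \operatorname{Alg}_{\mathcal{O}}(\mathscr{V}) \to \operatorname{Alg}_{\mathcal{O}}(\mathscr{W})$, and this assignment is itself functorial in $F$. This is essentially tautological from the definitions: both sides are full subcategories of suitable sections of the $\infty$-operads in question, and postcomposition with $F^{\otimes}$ preserves inert and active morphisms.

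Second, applying this fiberwise to the family $\{\mathrm{Assoc}_X^{\otimes}\}_{X \in \scrS}$ as $X$ varies over spaces, and using naturality also in the indexing operad, one would obtain a map between the two Cartesian fibrations over the ``space-of-objects'' parameter, hence the desired functor on total categories $\operatorname{Lin}_{\mathscr{V}}\CatVinfty \to \operatorname{Lin}_{\mathscr{W}}\CatVinfty$. Morphisms of enriched categories, which allow the space of objects to change, are handled by the same naturality: in the Gepner--Haugseng framework they are encoded as morphisms of algebras over a span of operads, and such morphisms are transported by $F^{\otimes}$ without further choices.

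The main obstacle is packaging the above into a genuine functor of $\infty$-categories rather than a mere map on objects. This requires realizing $\operatorname{Lin}_{(-)}\CatVinfty$ as a functor on the $\infty$-category of monoidal $\infty$-categories and lax monoidal functors; Gepner--Haugseng accomplish this by exhibiting the source as a suitable $\infty$-category of sections of a Cartesian fibration classified by $\mathscr{V}^{\otimes}$, so that functoriality in $F$ reduces to postcomposition of sections. The resulting construction is then manifestly coherent, and specializes on objects to the expected change-of-enrichment functor.
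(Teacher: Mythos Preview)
The paper does not supply its own proof of this proposition: it is stated with attribution to \cite[Corollary~5.7.6]{gepnerhaugseng} and used as a black box throughout. Your sketch is a faithful outline of the Gepner--Haugseng argument itself---realising $\mathscr{V}$-enriched categories as algebras for the family of operads $\mathrm{Assoc}_X^{\otimes}$ (more precisely, as \emph{categorical algebras} in their terminology), and transporting such algebras along a map of $\infty$-operads $F^{\otimes}\colon\mathscr{V}^{\otimes}\to\mathscr{W}^{\otimes}$ by postcomposition. One small refinement: in Gepner--Haugseng the category $\operatorname{Lin}_{\mathscr{V}}\CatVinfty$ is obtained not directly as a total category of a Cartesian fibration over $\scrS$, but by first forming a category of ``categorical algebras'' (where the indexing space is built in) and then localising at the fully faithful and essentially surjective functors; the change-of-enrichment functor respects this localisation because $F$ is compatible with the notion of equivalence. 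Your account of this last step is slightly compressed, but the structure of the argument is correct.
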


Recall that $\LinAPrLU$ is naturally symmetric monoidal. Indeed, 
 limits and colimits inside $\LinAPrLU$ are computed as in $\PrLU$ (see the proof of \cref{lemma:limitsandcolimitsovergroupoids}). Then, by \cite[Theorem $4.5.2.1$]{ha},  the category $\smash{\LinAPrLU}$ carries a  natural  symmetric monoidal structure given by the relative tensor product relative over $\scrA$; moreover,  such monoidal structure commutes with colimits separately in each variable

\begin{corollary}
	\label{cor:infinittwo}
Let $\scrA$ be a presentably symmetric monoidal category. Any category $\scrC$ enriched over $\LinAPrLU$ is enriched over $\CatVinfty$ hence is a $2$-category.
\end{corollary}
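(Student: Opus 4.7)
The plan is to apply the change-of-base enrichment recalled in \cref{prop:basechange}, which reduces the statement to exhibiting a lax (symmetric) monoidal functor $U\colon \LinAPrLU \to \CatVinfty$, where the source carries the relative tensor product over $\scrA$ and the target is Cartesian monoidal.

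First I would construct $U$ as the natural forgetful functor that sends a presentable $\scrA$-linear category to its underlying large $\infty$-category, forgetting cocompleteness, presentability, and the $\scrA$-module structure. To endow $U$ with a lax symmetric monoidal structure, I would invoke the universal property of the relative tensor product: for every pair $\scrC, \scrD \in \LinAPrLU$, the object $\scrC \otimes_{\scrA} \scrD$ comes equipped with a canonical $\scrA$-bilinear and separately cocontinuous functor $\scrC \times \scrD \to \scrC \otimes_{\scrA} \scrD$. After applying $U$ this yields a morphism $U(\scrC) \times U(\scrD) \to U(\scrC \otimes_{\scrA} \scrD)$ in $\CatVinfty$, providing the lax structure map; the unit $\{\ast\} \to U(\scrA)$ simply selects $\boldone_{\scrA}$, and the higher coherences follow from those of the relative tensor product itself. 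Alternatively, and perhaps more cleanly, one can identify $U$ as a right adjoint to a symmetric monoidal left adjoint (a suitable ``free $\scrA$-linear presentable category'' construction), and then appeal to the general fact that right adjoints of symmetric monoidal functors between presentably monoidal $\infty$-categories inherit a canonical lax symmetric monoidal structure.

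Once $U$ carries its lax symmetric monoidal structure, \cref{prop:basechange} immediately produces a canonical functor from the category of $\LinAPrLU$-enriched categories to the category of $\CatVinfty$-enriched categories. In particular, any category $\scrC$ enriched over $\LinAPrLU$ is naturally enriched over $\CatVinfty$. Since a category enriched over $\CatVinfty$ is by construction an $(\infty,2)$-category (e.g.\ through the complete Segal space or $\Theta_2$-space model of Rezk, or equivalently via the Gepner--Haugseng formalism cited in the excerpt), this yields the conclusion.

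The main obstacle I anticipate is assembling the lax symmetric monoidal structure on $U$ with all higher coherences. The universal-property approach requires checking that the canonical bilinear maps out of $\scrC \times \scrD$ fit together into a lax symmetric monoidal structure at the level of $\infty$-operads, which is technically delicate to verify by hand. This is why the adjoint perspective is preferable: once one exhibits $U$ as the right adjoint of a strong symmetric monoidal left adjoint, the lax enhancement is automatic and the remaining content of the corollary is just an application of \cref{prop:basechange}.
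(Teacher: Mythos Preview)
Your proposal is correct and follows essentially the same approach as the paper: reduce to \cref{prop:basechange} by showing the forgetful functor $\LinAPrLU \to \CatVinfty$ is lax monoidal, preferably via a right-adjoint argument. The paper carries this out by explicitly factoring the forgetful functor as $\LinAPrLU \to \PrLU \hookrightarrow \CatVinfty$: the first map is lax monoidal as the right adjoint to the strongly monoidal base-change $\PrLU \to \LinAPrLU$ along $\scrS \to \scrA$, and the second is handled as the composite of the strongly monoidal inclusion $\PrLU \subseteq \CatVinftycocom$ with the lax monoidal inclusion $\CatVinftycocom \subseteq \CatVinfty$ (citing specific results in \cite{ha} and \cite{laxmonoidal}). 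Your single-step ``free $\scrA$-linear presentable category'' adjoint is morally the composite of these left adjoints, so the decomposition is really just a matter of bookkeeping and pinning down references.
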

\begin{proof}
Note first that the natural inclusion functor $\PrLU\subseteq\CatVinfty$ is lax monoidal, because it is a composition of the strongly monoidal inclusion $\PrLU\subseteq\CatVinftycocom$ (\cite[Proposition $4.8.1.15$]{ha}) with the lax monoidal inclusion $\CatVinftycocom\subseteq\CatVinfty$ (\cite[Corollary $4.1.8.4$]{ha}). Moreover, given any presentably symmetric monoidal category $\scrA$, the natural cocontinuous and symmetric monoidal functor $\scrS\to\scrA$ yields a strongly monoidal functor$$\PrLU\simeq\operatorname{Lin}_{\scrS}\PrLU\longrightarrow\LinAPrLU$$which is left adjoint to the natural forgetful functor $\LinAPrLU\to\PrLU$. Therefore, such forgetful functor is lax monoidal (\cite[Theorem B]{laxmonoidal}), and we have a chain of forgetful lax monoidal functors$$\LinAPrLU\longrightarrow\PrLU\longhookrightarrow\CatVinfty.$$
The    statement then  follows from \cref{prop:basechange}. 
\end{proof}

\begin{remark}
	\label{remark:PrLModenrichment}
Let single out an important special case of \cref{cor:infinittwo}. 
As $\smash{\LinAPrLU}$ is symmetric monoidal  it is naturally enriched over itself, by \cite[Proposition $4.2.1.33.(2)$]{ha}. Hence, in virtue of \cref{cor:infinittwo}, it is enriched over $\CatVinfty$.This  provides the $2$-categorical enhancement of $\PrLU$ and $\LinAPrLU$, respectively. Following the conventions introduced in \cref{notation:highercat}, we denote these enhancements as $2\PrLUtwo$ and $2\LinAPrLUtwo$.
\end{remark}

\begin{theorem}
\label{thm:infinitytwothm}
Let $X$ be a  connected space, and let $\scrA$ be a presentably symmetric monoidal category. Then, there is an equivalences of $2$-categories
	$$2\LCXPrLAtwo\simeq 2\LModtwo_{\Omega_*X}{\lp2\LinAPrLUtwo\rp}$$
Additionally, if $X$ is simply connected, there are equivalences of $2$-categories
	$$2\LCXPrLAtwo\simeq 2\LModtwo_{\Omega_*X}{\lp2\LinAPrLUtwo\rp} \simeq2\LModtwo_{\LMod_{\Omega^2_*X}(\scrA)}{\lp2\LinAPrLUtwo\rp}.$$
\end{theorem}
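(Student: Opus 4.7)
The plan is to exhibit both sides of each claimed equivalence as naturally enriched over $\LinAPrLU$, and then to argue that the underlying $1$-categorical equivalences of \cref{cor:koszulpresentable} and \cref{cor:maincor} refine to $\LinAPrLU$-enriched equivalences. Combined with \cref{cor:infinittwo}, which identifies $\LinAPrLU$-enriched categories as $2$-categories via change of base along the lax monoidal forgetful functor $\LinAPrLU \to \CatVinfty$, this yields the desired statement.

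For the enrichment, the category $\LinAPrLU$ is closed symmetric monoidal with internal hom given by $\FuninL_{\scrA}(-,-)$, hence is canonically self-enriched via \cite[Proposition~$4.2.1.33$]{ha}; this gives the $2$-category $2\LinAPrLUtwo$. The functor category $\Fun(X,\hsp\LinAPrLU)$ carries a pointwise $\LinAPrLU$-enrichment which, since $X$ is a groupoid, can equivalently be realized as the cotensoring of $\LinAPrLU$ by $X$ inside $\LinAPrLU$-enriched categories. For the right-hand side, $\Omega_*X$ is an $\Ebb_1$-monoid in spaces, so $\Omega_*X\otimes\boldone_{\LinAPrLU}$ is an $\Ebb_1$-algebra in $\LinAPrLU$ and $\LMod_{\Omega_*X}(\LinAPrLU)$ is naturally a left $\LinAPrLU$-module via the free-module functor; in particular it is $\LinAPrLU$-enriched.

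The substantive task is to promote the chain of equivalences \eqref{equivalences:chain} used in the proof of \cref{monodromyeq} to $\LinAPrLU$-enriched equivalences. Each step is a formal consequence of a universal property that survives passage to enriched categories: the Yoneda lemma has an enriched version (\cref{thm:yonedanatural}); the straightening equivalence $\Fun(X^{\op},\hsp\scrS)\simeq\scrS_{/X}$ is compatible with cocontinuous actions of $\LinAPrLU$; the categorical Eilenberg--Watts theorem is shown to be natural in \cref{lemma:eilenbergwattsnatural} and the same argument lifts to the enriched setting; and the identification of left and right modules uses only the antipode on the group-like monoid $\Omega_*X$, which commutes with any enrichment. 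Since $\LinAPrLU$ is symmetric monoidal closed, a naturality argument relative to $\LinAPrLU$ assembles these into the claimed enriched equivalence, from which the first $2$-categorical statement follows after applying \cref{cor:infinittwo}.

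For the second part, when $X$ is simply connected, I would deduce the analogous $2$-categorical enhancement by applying the same strategy to \cref{cor:maincor}. The argument there proceeds entirely through manipulations in the closed symmetric monoidal category $\LinAPrLU$: \cref{lemma:rectificationofmodules} rectifies modules, and \cref{prop:koszulismonoidal} provides the monoidal identification $\LocSys(\Omega_*X;\hsp\scrA)\simeq\LMod_{\Omega^2_*X}(\scrA)$. These manipulations are automatically enriched, so they produce an equivalence of $\LinAPrLU$-enriched categories between $\LMod_{\Omega_*X}(\LinAPrLU)$ and $\LMod_{\LMod_{\Omega^2_*X}(\scrA)}(\LinAPrLU)\simeq\operatorname{Lin}_{\LMod_{\Omega^2_*X}(\scrA)}\PrLU$ (the last equivalence from \eqref{onamelin}). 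The main obstacle throughout is bookkeeping: verifying that each auxiliary construction (Yoneda, straightening, Eilenberg--Watts, delooping) preserves the $\LinAPrLU$-enriched structure. None of these steps is individually hard, but together they require care with size issues, since $\LinAPrLU$ is only cocomplete, not presentable, so the results on enriched presentable categories of \cite{gepnerhaugseng, heineenrichedaction} must be invoked at the appropriate universe level.
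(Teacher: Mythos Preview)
Your overall plan---promote the $1$-categorical equivalences to $\LinAPrLU$-enriched equivalences and then apply \cref{cor:infinittwo}---matches the paper's. However, the paper executes this plan by a different and considerably shorter route than the one you propose.

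Rather than tracking the $\LinAPrLU$-enrichment through each of the six steps in the chain \eqref{equivalences:chain}, the paper equips all three categories in question with \emph{closed symmetric monoidal} structures (Constructions \ref{constr:enrichment1}, \ref{constr:enrichment2}, \ref{constr:enrichment3}). For $\LocSysCat(X;\scrA)$ this is the pointwise product; for $\LMod_{\Omega_*X}(\LinAPrLU)$ and $\operatorname{Lin}_{\LMod_{\Omega^2_*X}(\scrA)}\PrLU$ the monoidal structure comes from the \emph{cocommutative bialgebra} structure on $\Omega_*X$ (respectively on $\LMod_{\Omega^2_*X}(\scrA)$) via Beardsley's \cref{prop:bialgebramonoidal}. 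Each category then receives a strongly monoidal coaugmentation from $\LinAPrLU$ (the functors $\mathrm{const}$, $\triv_{\Omega_*X\otimes\scrA}$, $\triv_{\LMod_{\Omega^2_*X}(\scrA)}$), and the $\LinAPrLU$-enrichment is induced by this coaugmentation together with the closed structure. The proof of the theorem then reduces to the single observation that the equivalences of \cref{cor:koszulpresentable} and \cref{cor:maincor} intertwine these coaugmentations---i.e., that constant local systems are sent to trivial modules---which is essentially immediate.

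Your approach of verifying enriched compatibility step-by-step through \eqref{equivalences:chain} is not wrong, but it is more laborious and leaves several nontrivial checks implicit (enriched straightening, an enriched Eilenberg--Watts statement). Note also that your description of the enrichment on $\LMod_{\Omega_*X}(\LinAPrLU)$ as coming ``via the free-module functor'' does not match the paper's construction and would need to be reconciled with the bialgebra-based closed monoidal structure; the paper's route avoids this issue entirely.
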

 Let us briefly explain our  strategy to prove \cref{thm:infinitytwothm}. The three categories appearing in the statement of \cref{cor:maincor} are naturally cocompletely tensored over $\LinAPrLU$, and the tensor action is closed. This yields an enrichment over $\LinAPrLU$ and hence, by \cref{cor:infinittwo},  a structure of $2$-categories. We will prove that the equivalences of \cref{cor:maincor} intertwine the tensor action of $\LinAPrLU$: this imples that the three categories have compatible $\LinAPrLU$-enrichments and readily implies \cref{thm:infinitytwothm}.

\begin{construction}[{\bf The $2$-category of local systems}]
\label{constr:enrichment1}
Let $\scrC$ be a  cocompletely closed symmetric monoidal category and let $\scrD$ be a small category. The category of functors ${\Fun}{\lp\scrD,\hsp\scrC\rp}$ inherits a point-wise cocompletely symmetric monoidal structure by  \cite[Remark $2.1.3.4$]{ha}. With this  monoidal structure,  the functor$$\mathrm{const}\colon\scrC\longrightarrow{\Fun}{\lp \scrD,\hsp\scrC\rp}$$becomes   limit- and colimit-preserving,  and strongly monoidal. This turns ${\Fun}{\lp \scrD,\hsp\scrC\rp}$ into an $\scrC$-$\Einf$-algebra  by  \cite[Corollary $3.4.1.7$]{ha}; in particular, ${\Fun}{\lp\scrD,\hsp\scrC\rp}$ is left tensored over $\scrC$ inside $\CatVinftycocom$.

 Note  that the symmetric monoidal structure of ${\Fun}{\lp\scrD,\hsp\scrC\rp}$ is closed: for any functor $F\colon\scrD\to\scrC$ the action$$-\otimes F\colon{\Fun}{\lp\scrD,\hsp\scrC\rp}\longrightarrow{\Fun}{\lp\scrD,\hsp\scrC\rp}$$admits a right adjoint 
 $$\Mapin_{{\Fun}{\lp\scrD,\hsp\scrC\rp}}{\lp F,\hsp-\rp}\colon{\Fun}{\lp\scrD,\hsp\scrC\rp}\longrightarrow{\Fun}{\lp\scrD,\hsp\scrC\rp}
 $$
 which can be informally described as the assignment sending a functor $G$ to the functor 
$$D\mapsto \Mapin_{\scrC}{\lp FD,\hsp GD\rp}.$$In particular, \cite[Theorem $1.1$]{enrichment} guarantees that ${\Fun}{\lp\scrD,\hsp\scrC\rp}$ is enriched over itself, hence over $\scrC$ thanks to \cref{prop:basechange}.

In  our case $\scrC$ is $\PrLU$ and $\scrD$ is  a  space $X$ seen as a groupoid.  This yields the desired $\PrLU$-enrichment of $\smash{\LCXPrL}$. Following our conventions we denote   the resulting $2$-category $2\smash{\mathbf{LocSysCat}{\lp X \rp}}$.   Thanks to \cref{remark:PrLModenrichment} this discussion extends also to the case of  $\scrA$-enriched presentable categories,  where  $\scrA$ is a presentably symmetric monoidal category.  This yields the $2$-category $2\smash{\mathbf{LocSysCat}{\lp X;\scrA\rp}}.$ We will call this  $2$-category the \textit{$2$-category of $\scrA$-linear categorical local systems over $X$}.
\end{construction}
\begin{remark}
	\label{remark:agreement}
 It is possible to define alternatively the $2$-category of ($\scrA$-linear) categorical systems over $X$ as the $2$-category of $2$-functors between $X$ (seen as a trivial $2$-category via the strongly monoidal inclusion $\scrS\subseteq\CatVinfty$) and $2\LinAPrLUtwo$.  In fact, this approach might seem preferable, as it parallels more closely the definition of ordinary local systems. Indeed, if  $\scrC$ is a presentable category, local systems on $X$ with coefficients in $\scrC$ are defined precisely as functors from $X$ to $\scrC$.

 However, it is easy to see that the two definitions agree. A  straightforward computation  shows that the $2$-category $2\LocSysCattwo(X;\scrA)$  is equivalent to the evaluation at $\scrA$ of the right adjoint to the Cartesian product of $2$-categories$$-\times X\colon2\CatVinftytwo\longrightarrow2\CatVinftytwo,$$which is how the ``correct" $\CatVinfty$-enriched category of $\CatVinfty$-enriched functors is defined in \cite{gepnerhaugseng}. In particular, $2\smash{\mathbf{LocSysCat}{\lp X;\scrA\rp}}$ coincides with the internal mapping object in $2\CatVinftytwo$ between $X$ and $2\LinAPrLUtwo$.
\end{remark}
\begin{construction}[{\bf The $2$-category of $\Omega_*X$-module categories}]
	\label{constr:enrichment2}
Let $\scrA$ be a presentably symmetric monoidal category  and let $X$ be a pointed connected space. Consider the category  $\smash{\LMod_{\Omega_*X}{\lp\LinAPrLU\rp}}$. By \cref{lemma:rectificationofmodules} we can  replace $\Omega_*X$ with $\Omega_*X\otimes\scrA$, i.e., with the image of $\Omega_*X$ under the colimit preserving and symmetric monoidal functor \begin{align}
\label{functor:tensoring}
\scrS\longrightarrow\LinAPrLU.
\end{align}The category $\Omega_*X\otimes\scrA$ is presentably $\Ebb_1$-monoidal (because $\Omega_*X$ is a topological monoid) but it is also a \textit{cocommutative bialgebra} in $\LinAPrLU$. Indeed, the symmetric monoidal structure on $\scrS$ is Cartesian (\cite[Section $2.4.1$]{ha}), hence every object is naturally a cocommutative comonoid (this is a dual statement to \cite[Proposition $2.4.3.9$]{ha}). In particular, applying a strongly monoidal functor preserves both the algebra and coalgebra structures. Since we are dealing with a category of left modules over a bialgebra which is cocommutative (i.e., an $\Einf$-coalgebra) we can  apply  the following result due to Beardsley.

\begin{proposition}[{\cite[Corollary $3.19$]{bialgebrasmonoidalstructure}}]
	\label{prop:bialgebramonoidal}
	Let $\scrC$ be a symmetric monoidal category. Let $H$ be a $(n,k)$-bialgebra in $\scrC$ -- i.e., an $\Ebb_n$-algebra object in the category of $\Ebb_k$-coalgebras in $\scrC$. Then the category of left $H$-modules $\LMod_H(\scrC)$ admits an $\Ebb_k$-monoidal structure, and the forgetful functor $\oblv_H\colon\LMod_H(\scrC)\to\scrC$ is $\Ebb_k$-monoidal.
\end{proposition}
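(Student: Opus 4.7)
The strategy is to construct the $\Ebb_k$-monoidal structure on $\LMod_H(\scrC)$ by leveraging the $\Ebb_k$-coalgebra structure of $H$: the $\Ebb_n$-algebra part of the $(n,k)$-bialgebra produces the module category as an $\infty$-category, while the $\Ebb_k$-coalgebra part supplies the ``diagonal $H$-actions'' on tensor products that assemble into an $\Ebb_k$-monoidal structure. As a preparatory step, I would first verify that for $\scrC$ symmetric monoidal, the $\infty$-category $\mathrm{coAlg}_{\Ebb_k}(\scrC)$ of $\Ebb_k$-coalgebras in $\scrC$ carries a canonical symmetric monoidal structure, with tensor product computed on underlying objects of $\scrC$, such that the forgetful functor $U\colon\mathrm{coAlg}_{\Ebb_k}(\scrC)\to\scrC$ is symmetric monoidal. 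This is the dual of the classical fact from \cite{ha} that $\Ebb_k$-algebras in a symmetric monoidal $\infty$-category again form a symmetric monoidal $\infty$-category; concretely, it corresponds to endowing the tensor product of two $\Ebb_k$-coalgebras with the $\Ebb_k$-coalgebra structure induced by the symmetry braiding.

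The main construction then produces an $\infty$-operad $\LMod_H(\scrC)^{\otimes}\to\Ebb_k^{\otimes}$ whose fiber over $\langle m \rangle$ is $\LMod_H(\scrC)^{\times m}$ and which sends an active morphism $\mu\colon\langle m \rangle\to\langle 1 \rangle$ to the functor
$$(M_1,\ldots,M_m)\longmapsto\lp M_1\otimes\cdots\otimes M_m,\hsp a_{\mu}\rp,$$
where the action $a_{\mu}$ is obtained by composing the $m$-fold $\Ebb_k$-comultiplication $H\to H^{\otimes m}$ (coming from the $\Ebb_k$-coalgebra structure of $H$) with the componentwise actions on the $M_i$'s, inserted into $\scrC^{\otimes}$ via the appropriate braidings. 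The unit is $\boldone_{\scrC}$ equipped with the $H$-action factoring through the counit. The $\Ebb_n$-algebra structure of $H$ inside $\mathrm{coAlg}_{\Ebb_k}(\scrC)$ ensures that this assignment is a well-defined map of $\infty$-operads over $\Ebb_k^{\otimes}$, and by construction the forgetful functor $\LMod_H(\scrC)^{\otimes}\to\scrC^{\otimes}$ is strongly $\Ebb_k$-monoidal.

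The main obstacle lies in the rigorous verification of operadic coherence at the $\infty$-categorical level: one must confirm that the assignment above respects composition of active morphisms and the symmetric group actions on the operations of $\Ebb_k$, correctly interleaving the $\Ebb_n$-algebra structure of $H$ with its $\Ebb_k$-coalgebra structure. This is the $\infty$-categorical incarnation of the classical principle that a bialgebra structure on $H$ promotes $\LMod_H$ from a mere category to a monoidal one; the full verification is carried out in Beardsley's paper \cite{bialgebrasmonoidalstructure} via a Cartesian fibration argument over $\Ebb_k^{\otimes}$, so we simply invoke that result for the technical bookkeeping.
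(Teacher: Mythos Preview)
The paper does not supply its own proof of this proposition: it is stated as a citation of \cite[Corollary $3.19$]{bialgebrasmonoidalstructure} and used as a black box. Your sketch of the construction is a reasonable informal account of why the result holds, and since you ultimately defer the operadic coherence to Beardsley's paper as well, your proposal is consistent with what the paper does---only more verbose.
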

Note that this implies that $\smash{\LMod_{\Omega_*X}{\lp\LinAPrLU\rp}}$  carries a natural symmetric monoidal structure which is compatible with colimits.  Indeed, the action of $\scrS$ on $\LinAPrLU$ is compatible with colimits; therefore, \cite[Corollaries $4.2.3.3$ and $4.2.3.5$]{ha} imply that both limits and colimits of $\Omega_*X$-modules in $\LinAPrLU$ can be computed after forgetting the  $\Omega_*X$-action.

Note also that this symmetric monoidal structure is closed.  That is, if $\scrC$ and $\scrD$ are two presentably $\scrA$-linear categories   endowed with a $\Omega_*X$-action, then the  category of $\scrA$-linear and colimit-preserving functors ${\FuninL_{\scrA}}{\lp\scrC,\hsp\scrD\rp}$ carries a $\Omega_*X$-action informally described by the rule
\begin{align*}
g\cdot F\colon \scrC&\longrightarrow\scrD\\ C&\mapsto g\cdot_{\scrD}F\lp g^{-1}\cdot_{\scrC}C\rp.
\end{align*}

In particular, we have a trivial $\Omega_*X$-action functor \begin{align}
\label{funct:triv1}
\triv_{\Omega_*X\otimes\scrA}\colon\LinAPrLU\longrightarrow\operatorname{Lin}_{\Omega_*X\otimes\scrA}\PrLU,
\end{align}induced by pulling back along the functor of symmetric comonoidal categories $$\Omega_*X\otimes\scrA\longrightarrow\left\{*\right\}\otimes\scrA\simeq\scrA$$
The latter, in turn, is induced by the natural map $\Omega_*X\to\left\{*\right\}$ after applying the functor \eqref{functor:tensoring}. The functor $\triv_{\Omega_*X\otimes\scrA}$ is a right adjoint which commutes with both limits and colimits. Moreover it is strongly monoidal, in virtue of  the description of the symmetric monoidal structure on $\smash{\LMod_{\Omega_*X}{\lp\LinAPrLU\rp}}$ provided by \cref{prop:bialgebramonoidal}. This turns $\smash{\LMod_{\Omega_*X}{\lp\LinAPrLU\rp}}$ into a $\LinAPrLU$-$\Einf$-algebra with  a closed $\LinAPrLU$-action; hence it promotes it to an $2$-category. Following our conventions, we denote this $2$-category as 
 $2\smash{\LModtwo_{\Omega_*X}{\lp2\LinAPrLUtwo\rp}}$.

\end{construction}

\begin{construction}[{\bf The $2$-category of $\LMod_{\Omega^2_*X}$-modules}]
\label{constr:enrichment3}
Let $\scrA$ be a presentably symmetric monoidal category and let  $X$ be a pointed simply connected space. Arguing as in \cref{constr:enrichment2}, we  see  that the double based loop space $\Omega^2_*X$ is both an $\Ebb_2$-monoid and an $\Einf$-comonoid in spaces, in a compatible way. Using the terminology of \cite{bialgebrasmonoidalstructure}, we can say that $\Omega^2_*X$ is a \textit{$(2,\infinity)$-bimonoid}.

This implies that the presentable category $\LMod_{\Omega_*^2X}{\lp\scrA\rp}$ is naturally a $(1,\infinity)$-bialgebra object in $\LinAPrLU$. Indeed, it is the image of the pointed and connected $(2,\infinity)$-bimonoid $\Omega_*X$ under the composition of two strongly monoidal functors:  namely the functor \eqref{functor:modulesoverloopspace}, which is proved to be strongly monoidal in \cref{prop:koszulismonoidal}, and the functor\begin{align}
\label{functor:lmod}
	\LMod_{(-)}(\scrA)\colon\Alg_{\Ebb_1}{\lp\scrA\rp}\longrightarrow{\lp\LinAPrLU\rp}_{\scrA/}
\end{align}which is strongly monoidal for any presentably symmetric monoidal category $\scrA$ (\cite[Theorem $4.8.5.16$]{ha}). Using again \cref{prop:bialgebramonoidal}, we see that   $\smash{\operatorname{Lin}_{\LMod_{\Omega^2_*X}(\scrA)}\PrLU}$ comes equipped with a cocontinuous closed symmetric monoidal structure such that the forgetful functor$$\oblv_{\LMod_{\Omega^2_*X}(\scrA)}\colon \operatorname{Lin}_{\LMod_{\Omega^2_*X}(\scrA)}\PrLU\longrightarrow\LinAPrLU$$is strongly monoidal.  Let us make a comment on this monoidal structure: the internal mapping  category between two objects $\scrC$ and $\scrD$   in 
$\smash{\operatorname{Lin}_{\LMod_{\Omega^2_*X}(\scrA)}\PrLU}$  is \textit{not} the category of $\LMod_{\Omega^2_*X}(\scrA)$-linear colimit-preserving functors between $\scrC$ and $\scrD$. Rather, just like in \cref{constr:enrichment2}, it is  the category ${\FuninL_{\scrA}}{\lp\scrC,\hsp\scrD\rp}$ of $\scrA$-linear and colimit-preserving functors from $\scrC$ to $\scrD$, equipped with its natural $\LMod_{\Omega^2_*X}(\scrA)$-linear structure. Objectwise, this action can be described as follows: for a functor $F\colon\scrC\to\scrD$ and an $\Omega^2_*X$-module object $M$ in $\scrA$, the functor $M\otimes F$ is defined as $F(M\otimes_{\scrC}-)$

Now note that the forgetful functor$$\oblv_{\Omega^2_*X} \colon\LMod_{\Omega^2_*X}(\scrA)\longrightarrow\scrA$$is lax $\Ebb_1$-monoidal, since it is the right adjoint of the free $\Omega_*^2X$-module functor$$-\otimes\Omega^2_*X\colon\scrA\longrightarrow\LMod_{\Omega^2_*X}{\lp\scrA\rp}$$which is easily seen to be $\Ebb_1$-monoidal. Indeed, the functor $-\otimes\Omega_*^2X$ is  the image of the map of $\Ebb_2$-algebras $\boldone_{\scrA}\to \Omega^2_*X\otimes \boldone_{\scrA}$ under the strongly monoidal functor \eqref{functor:lmod}. In particular, $\oblv_{\Omega^2_*X}$ induces a pullback functor between categories of left modules\begin{align}
\label{functor:triv2}
\triv_{\LMod_{\Omega^2_*X}(\scrA)}\colon \LinAPrLU\longrightarrow\operatorname{Lin}_{\LMod_{\Omega^2_*X}(\scrA)}\PrLU,
\end{align}which equips an $\scrA$-linear presentable category $\scrC$  with the trivial $\LMod_{\Omega^2_*X}(\scrA)$-action.  Arguing as in \cref{constr:enrichment2},   we can see that this functor preserves all limits and colimits and is strongly monoidal. As a consequence $\smash{\operatorname{Lin}_{\LMod_{\Omega^2_*X}(\scrA)}\PrLU}$ is tensored (and hence enriched) over $\LinAPrLU$. This yields the desired $2$-categorical enhancement, which we denote $2\smash{\mathbf{Lin}_{\LMod_{\Omega^2_*X}(\scrA)}\PrLUtwo}$.
\end{construction}

\begin{proof}[Proof of \cref{thm:infinitytwothm}]
 The first part of \cref{thm:infinitytwothm} follows from the fact that the equivalence of categories proved in \cref{cor:koszulpresentable} intertwines the symmetric monoidal structures of $\smash{\LCXPrLA}$ and $\smash{\LMod_{\Omega_*X}{\lp\LinAPrLU\rp}}$  (see Costructions \ref{constr:enrichment1} and \ref{constr:enrichment2}) and hence their $\LinAPrLU$-enrichment. For this, it is sufficient to note that the equivalence of \cref{cor:koszulpresentable} is compatible with the coaugmentation functors from $\LinAPrLU$, i.e., that it takes constant functors to trivial $\Omega_*X$-modules, which is clear.

 Let us now move to  the second half of   \cref{thm:infinitytwothm}.  Arguing as above, we see that  it is enough to show that  the equivalence of \cref{cor:maincor} is compatible with the coaugmentation functors from $\LinAPrLU$. 
Recall that the equivalence of these categories of modules arises from the equivalence of presentably $\Ebb_1$-monoidal categories$$\Omega_*X\otimes\scrA\simeq\LMod_{\Omega^2_*X}(\scrA)$$proved in \cref{sec:categorifyingmodules}. In particular, such equivalence does not change the underlying objects and therefore turns trivial $\Omega_*X$-actions into trivial $\LMod_{\Omega^2_*X}(\scrA)$-actions. So we can conclude that the diagram of functors$$\begin{tikzpicture}[scale=0.75]
\node (a1) at (-4,-2){$\LMod_{\Omega_*X}{\lp\LinAPrLU\rp}$};\node (a2) at (4,-2){$\operatorname{Lin}_{\LMod_{\Omega^2_*X}(\scrA)}\PrLU$};
\node (b) at (0,0){$\LinAPrLU$};
\draw[->,font=\scriptsize](b) to[bend right] node[left]{$\ref{funct:triv1}$} (a1);
\draw[->,font=\scriptsize](b) to[bend left] node[right]{$\ref{functor:triv2}$} (a2);
\draw[->,font=\scriptsize](a1) to node[below]{$\ref{cor:maincor}$}(a2);
\draw[->,font=\scriptsize](a1) to node[above]{$\simeq$}(a2);
\end{tikzpicture}$$
commutes, and this concludes the proof.
\end{proof}
 
\subsection{Local systems of \texorpdfstring{$n$}{n}-categories and higher modules}
\label{sec:localsystemncat}
In this Section, we will switch gears and study local systems of $n$-categories on a space $X$. Under suitable connectedness assumptions on $X$, we will prove an analogue of \cref{thm:infinitytwothm} which  provides an equivalence of $(n+1)$-categories relating local systems of $n$-categories and higher monodromy data (see \cref{conj:infinityn} below).

 We start by fixing  notations and by recalling some  relevant constructions   from  \cite{gepnerhaugseng} and \cite{stefanich2020presentable}. We warn the reader to bear in mind the non-standard notations for higher categories  we introduced in \cref{notation:highercat}.

\begin{notation}
Following \cite[Remark $5.7.13$]{gepnerhaugseng}, we define the \textit{$(n+1)$-category of $n$-categories} inductively as follows. Recall  that we denote by $\CatVinfty$ the very large category of (possibly large) categories. The category $\CatVinfty$ is symmetric monoidal with the cartesian product, so we can consider the $2$-category of $\CatVinfty$-enriched categories. We set 
$$2\widehat{\mathbf{Cat}}_{({\scriptstyle\infty},1)}\coloneqq {\operatorname{Lin}_{\widehat{\operatorname{Cat}}_{({\scriptscriptstyle\infty},1)}}{\CatVinfty}}.$$
 By  \cite[Corollary $5.7.12$]{gepnerhaugseng} this is again a symmetric monoidal category. Suppose  by the inductive hypothesis that we have defined the $n$-category $n\widehat{\mathbf{Cat}}_{({\scriptstyle\infty},n-1)}$ of $(n-1)$-categories. Then we define the \textit{$(n+1)$-category of $n$-categories} as $$(n+1)\widehat{\mathbf{Cat}}_{({\scriptstyle\infty},n)}\coloneqq\operatorname{Lin}_{n\widehat{\mathbf{Cat}}_{({\scriptscriptstyle\infty},n-1)}}\CatVinfty,$$i.e., as the $(n+1)$-category of categories enriched over $n$-categories. This agrees with \cite[Definition $6.1.5$]{gepnerhaugseng}.
\end{notation}
\begin{remark}
	\label{remark:adjunction}
	In \cref{notation:highercat} we introduced operations which, given an $m$-category $\scrC$ and an integer $n$, allow  us to turn  $\scrC$ into an $n$-category. As we shall clarify here, these operations are in fact functorial. 	\cref{prop:basechange} guarantees that the limit-preserving (and hence, strongly monoidal) inclusion $\scrS\subseteq\CatVinfty$ produces a functor$$\operatorname{Lin}_{\scrS}\CatVinfty=\CatVinfty  \longrightarrow\operatorname{Lin}_{\widehat{\operatorname{Cat}}_{({\scriptscriptstyle\infty},1)}}\CatVinfty\eqqcolon 2\widehat{\mathbf{Cat}}_{(\scriptstyle\infty,1)}$$which is moreover \textit{lax monoidal} for the Cartesian monoidal structure on both source and target (\cite[Corollary $5.7.11$]{gepnerhaugseng}). Hence, we can apply again \cref{prop:basechange} to obtain another lax monoidal functor$$2\CatVtwoinfty[1]\coloneqq\operatorname{Lin}_{\widehat{\operatorname{Cat}}_{({\scriptscriptstyle\infty},1)}}\CatVinfty \stackrel{\iota_1}\longrightarrow\operatorname{Lin}_{2\widehat{\mathbf{Cat}}_{({\scriptscriptstyle\infty},1)}}\CatVinfty\eqqcolon3\CatVtwoinfty[2].$$
Iterating this argument, we obtain a chain of lax monoidal inclusions$$\mathscr{S} \subseteq
\CatVinfty
\subseteq
2\widehat{\mathbf{Cat}}_{(\scriptstyle\infty,1)}\stackrel{\iota_1}\subseteq
\ldots\subseteq n\widehat{\mathbf{Cat}}_{({\scriptstyle\infty},n-1)} \stackrel{\iota_{n-1}}\subseteq (n+1)\widehat{\mathbf{Cat}}_{({\scriptstyle\infty},n)}\subseteq\ldots$$which allows us to consider any $\Ebb_k$-topological monoid as an $\Ebb_k$-monoidal $n$-category for every $n\geqslant1$. This is just the formalization of the natural idea (discussed in \cref{notation:highercat} above) that, if $n\geqslant m$, an $m$-category can be viewed as an $n$-category such that every $k$-simplex is an equivalence for $k>m$. In particular, it makes sense to consider \textit{modules} over a topological monoid $G$ in the $(n+1)$-category of $n$-categories.

Conversely, consider the functor $$(-)^{\simeq}\colon\CatVinfty\longrightarrow\scrS$$which takes a category to its maximal subgroupoid. This is the right adjoint to the strongly monoidal and colimit preserving inclusion $\scrS\subseteq\CatVinfty$; hence it inherits a lax monoidal structure.  
Another inductive argument using \cite[Proposition $5.7.17$]{gepnerhaugseng} yields for any positive integer $n$ the adjunction\begin{align}
\label{adjunction:infinityn}
\adjunction{\iota_{n-1}}{ n\widehat{\mathbf{Cat}}_{({\scriptstyle\infty},n-1)}}{(n+1)\widehat{\mathbf{Cat}}_{({\scriptstyle\infty},n)}}{\tau_{\leqslant n-1}},
\end{align}
where applying $\tau_{\leqslant n-1}$
  amounts to considering an $n$-category as an $(n-1)$-category by forgetting the non-invertible $n$-simplices. As explained in \cref{notation:highercat}, we will mostly drop the symbol $\iota_{n-1}$ from our notations.
\end{remark}

\begin{remark}
\label{remark:infinitynfunctors}
For any two $n$-categories $n\bm{\scrC}$ and $n\bm{\scrD}$, using \cite[§$6.1.3$]{hinichyoneda} we can construct a category of $n$-functors (i.e., of $n\widehat{\mathbf{Cat}}_{({\scriptstyle\infty},n-1)}$-enriched functors) that we denote as $\smash{{\Fun}_n{\lp n \bm{\scrC},\hsp n \bm{\scrD} \rp}}$. 
Note that if $n \bm{\scrC}\simeq \iota_n\cdots\iota_k (k\bm{\scrC})$ is just an $k$-category $k\bm{\scrC}$ for some $k\leqslant n$, then the adjunction \eqref{adjunction:infinityn} implies that we have a chain of equivalences of categories
$$
{\Fun}_n{\lp k\bm{\scrC},\hsp n\bm{\scrD}\rp}
\simeq 
{\Fun}_{n-1}{\lp k\bm{\scrC},\hsp \tau_{\leqslant n-1} n\bm{\scrD} \rp}
\simeq
\cdots
\simeq {\Fun}_k{\lp k\bm{\scrC},\hsp \tau_{\leqslant k} n\bm{\scrD} \rp}.$$
If $k=1$, i.e., $n\bm{\scrC} \simeq \iota_k\cdots\iota_1(\scrC)$ is an ordinary category $\scrC$, using the $n\CatVtwoinfty[n-1]$-enrichment of $\tau_{\leqslant 1}\scrD$ one can produce an $n$-enhancement of the category $${\Fun}_{n}{\lp \scrC,\hsp n \bm{\scrD}  \rp}\simeq{\Fun}{\lp \scrC,\hsp\tau_{\leqslant1}n \bm{\scrD} \rp}$$which recovers the internal mapping object $n\Funtwo(\scrC,n\bm{\scrD})$ for the Cartesian symmetric monoidal structure on $n\CatVinftytwo[n-1]$. 
\end{remark}

The theory of presentable $n$-categories was only recently introduced by Stefanich. As it will play a key role in the following, we shall revisit its basic definitions and constructions.  For more details, the reader can consult \cite[Section $5$]{stefanich2020presentable}. 
We warn the reader that the definition of presentable $n$-categories due to Stefanich that we use in this section is just one of the available definitions for presentable $n$-categories. In \cite{mazelgee2021universal}, the authors propose yet another definition of presentable $n$-categories, which is incompatible with the one by Stefanich. For example, the underlying $n$-category of a presentable $(n+1)$-category in the sense of Mazel-Gee and Stern is again presentable; however, the theory of presentable $n$-categories developed by Stefanich is built in   such a way that the $(n+1)$-category $(n+1)\PrLUtwo[n]$ of presentable $n$-categories is a presentable $(n+1)$-category. In particular, $2\PrLUtwo$ is a presentable $2$-category in the sense of Stefanich, but cannot be presentable in the sense of Mazel-Gee and Stern since $\PrLU$ is known to be not presentable.

 Consider the functor\begin{align}
 \label{functor:allmodules}
\LMod_{(-)}{\lp\CatVinftycocom\rp}\colon\Alg{\lp\CatVinftycocom\rp}\longrightarrow\CatWinftycocom
\end{align}
which takes a cocomplete monoidal category $\scrA$ and sends it to the (very large) cocomplete category $\smash{\LMod_{\scrA}{\lp\CatVinftycocom\rp}}$ of cocomplete $\scrA$-modules. It is a symmetric monoidal functor, so if $\scrA$ is a cocomplete $\Ebb_k$-monoidal category for some $k\geqslant2$ then the category $\smash{\LMod_{\scrA}{\lp\CatVinftycocom\rp}}$ inherits a cocomplete $\Ebb_{k-1}$-monoidal structure given by the relative Lurie tensor product over $\scrA$. In particular, if $k=+\infinity$ (i.e., if we start from the category of cocomplete \textit{symmetric} monoidal categories), we obtain a functor\begin{align}
    \label{functor:modules}    \operatorname{Mod}_{(-)}\lp\CatVinftycocom[1]\rp\colon\CAlg{\lp\CatVinftycocom\rp}\longrightarrow\CAlg{\lp\CatWinftycocom\rp}.
\end{align}

For a cocomplete symmetric monoidal category $\scrA$, we would like to define an $n$-category of $\scrA$-modules  by iterating  $n$  times the functor \eqref{functor:modules}. However, in order to have a consistent theory,  we would need  a chain of nested universes.
We can fix this issue as follows. Let $\kappa_0$ be the first large cardinal with respect to our initial choice of universes $\scrU$ and $\scrV$. This means that $\kappa_0$-small spaces and categories are what we call small spaces and categories.

\begin{defn}
	\label{parag:npresentable1cat}
	 Let\begin{align}
\label{functor:compactmodules}
\operatorname{LMod}_{(-)}^{\operatorname{pr}}\coloneqq\LMod_{(-)}{\lp\CatVinftycocom\rp}^{\kappa_0}\colon \Alg{\lp\CatVinftycocom\rp}\longrightarrow\CatVinftycocom
\end{align}
denote the functor that sends a cocomplete monoidal category $\scrA$ to the category of $\kappa_0$-compact objects inside $\smash{\LMod_{\scrA} {\lp\CatVinftycocom\rp}}$. We say that an object of $\smash{\LMod_{\scrA}{\lp\CatVinftycocom\rp}^{\kappa_0}}$ is a \textit{presentable left $\scrA$-module}.
\end{defn}

The functor \eqref{functor:compactmodules} admits a lax monoidal structure (\cite[Remark $5.1.11$]{stefanich2020presentable}), hence it sends cocompletely $\Ebb_k$-monoidal categories to cocompletely $\Ebb_{k-1}$-monoidal categories. 
For $n\leqslant k$ we set 
\begin{align}
    \label{functor:nfoldcompactmodules}
    \operatorname{LMod}^{\operatorname{pr},n}_{(-)}\colon\Alg_{\Ebb_k}{\lp\CatVinftycocom[1]\rp}\longrightarrow\Alg_{\Ebb_{k-n}}{\lp\CatVinftycocom\rp}
\end{align}to be the $n$-fold iteration of the functor \eqref{functor:compactmodules}.

\begin{defn}
\label{def:nfoldmodules}
Let $k\in\NN_{\geqslant1}\cup\left\{\infinity\right\}$ and $n\leqslant k$ be integers, and let $\scrA$ be a presentably $\Ebb_k$-monoidal category.  
The \textit{category of presentable $\scrA$-linear $n$-categories} $$\LinAPrLU[n]$$ is the cocompletely symmetric monoidal category obtained by applying functor \eqref{functor:nfoldcompactmodules} to $\scrA$. In the case when $\scrA=\scrS$ is the category of spaces, we shall simply write $\PrLU[n]$ and call it the \textit{category of presentable $n$-categories}.  This choice of notation is justified by \cref{remark:PrL}.
\end{defn}

\begin{remark}
	\label{remark:PrL}
Note that when $n=1$ the category $\LinAPrLU[n]$ of \cref{def:nfoldmodules} agrees with the usual category of $\scrA$-linear presentable categories. This is a consequence of \cite[Proposition $5.1.4$ and Corollary $5.1.5$]{stefanich2020presentable}: $\kappa_0$-compact left $\scrA$-modules in $\CatVinftycocom$ are \textit{precisely} $\scrA$-linear presentable categories. 
\end{remark}

\begin{remark}
Our \cref{def:nfoldmodules} differs slightly  from Stefanich's conventions. Stefanich denotes the category $\LinAPrLU[n]$ by $\LMod_{\scrA}^{\operatorname{pr},n}$ and refers to it as the \textit{category of presentable categorical $n$-fold $\scrA$-modules} (see \cite[Definition $5.2.2$]{stefanich2020presentable}). We opted for the conventions in \cref{def:nfoldmodules} as this highlights the fact that objects in $\LinAPrLU[n]$ should be viewed as ($\scrA$-linear) presentable 
$n$-categories; also, by \cref{remark:PrL},  the notation $\LinAPrLU[n]$ has the advantage of being coherent with the notation we use in the ordinary case $n=1$. 
\end{remark}

\begin{construction}
    	\label{parag:infinitynpres}
Let $k\in\NN_{\geqslant1}\cup\left\{\infinity\right\}$ and $n\leqslant k$ be integers, and let $\scrA$ be a presentably $\Ebb_k$-monoidal category. We define the category $$\smash{\operatorname{Lin}_{\scrA}\CatVinftycocom[n]}\coloneqq \Mod_{\LinAPrLUtiny[n-1]}{\lp\CatVinftycocom[n]\rp}$$of cocompletely $\scrA$-linear $n$-categories as the image of $\scrA$ under the composition$$\Alg_{\Ebb_k}{\lp\CatVinftycocom\rp}\xrightarrow{\ref{functor:nfoldcompactmodules}}\Alg_{\Ebb_{k-n+1}}{\lp\CatVinftycocom\rp}\xrightarrow{\ref{functor:modules}}\Alg_{\Ebb_{k-n}}{\lp\CatWinftycocom\rp}$$where the first functor is the $(n-1)$-fold iteration of the functor (\ref{functor:compactmodules}) and the second is simply the functor (\ref{functor:modules}).

Note that, by definition, there is an inclusion
$$
\LinAPrLU[n] \longhookrightarrow\smash{\operatorname{Lin}_{\scrA}\CatVinftycocom[n]}.
$$
In general, objects in 
$\operatorname{Lin}_{\scrA}\CatVinftycocom[n-1]$
might not be   presentable, but they are 
always  cocompletely tensored and enriched over $\LinAPrLU[n-1]$. Thus, we can view them  as  $n$-categories as follows. Consider the composition of lax monoidal functors 
\begin{align*}
\operatorname{Lin}_{\scrA}\CatVinftycocom[n] \xrightarrow{
(a)}\operatorname{Lin}_{n\operatorname{Lin}_{\scrA}\PrLUtwotiny[n-1]}\PrLV& \stackrel{(b)}\longhookrightarrow\operatorname{Lin}_{m\operatorname{Lin}_{\scrA}\PrLUtwotiny[n-1]}\CatWinfty \stackrel{(c)}\longrightarrow\operatorname{Lin}_{n\widehat{\mathbf{Cat}}_{({\scriptscriptstyle\infty},n-1)}}\CatWinfty.
\end{align*}
where the functor $(a)$ is the Ind-completion functor $\operatorname{Ind}_{\kappa_0}$, the functor $(b)$ is induced by the forgetful functor 
$$
 \PrLV \longrightarrow \CatWinfty
$$
and the functor $(c)$ is the change of enrichment along the lax monoidal forgetful functor 
$$
\LinAPrLU[n] \longrightarrow (n+1)\CatVinftytwo[n].
$$

In this way, one obtains from an arbitrary $\scrA$-linear cocomplete $n$-category $n\bm{\scrC}$ an associated very large $n$-category. Taking its sub-$n$-category of $\kappa_0$-compact objects, we get a well defined lax monoidal functor
\[
\operatorname{Lin}_{\scrA}\CatVinftycocom[n] \longrightarrow\operatorname{Lin}_{n\widehat{\mathbf{Cat}}_{({\scriptscriptstyle\infty},n-1)}}\CatVinfty\eqqcolon(n+1)\CatVinftytwo[n].
\]
 In particular, for any $n\geqslant1$ we have an induced lax monoidal change-of-enrichment functor
\begin{align}
\label{onlols}
\begin{split}
\operatorname{Lin}_{(n+1)\operatorname{Lin}_{\scrA}\CatVinftycocomtiny[n]}\PrLV&\longrightarrow\operatorname{Lin}_{\operatorname{Lin}_{\scrA}\CatVinftycocomtiny[n]}\CatVinfty[1]\\&\longrightarrow\operatorname{Lin}_{(n+1)n\widehat{\mathbf{Cat}}_{({\scriptscriptstyle\infty},n)}}\CatVinfty[1]\eqqcolon(n+2)\CatVinftytwo[n+1]
\end{split}
\end{align}
\end{construction}
\begin{defn}
We denote by $(n+1)\mathbf{Lin}_{\scrA}\smash{\widehat{\mathbf{Cat}}}^{\operatorname{L}}_{({\scriptstyle\infty},n)}$ the $(n+1)$-category which is the image of the unit object in 
$\operatorname{Lin}_{\operatorname{Lin}_{\scrA}\CatVinftycocomtiny[n]}\PrLV$ under the functor \eqref{onlols}.
\end{defn}
The category $(n+1)\mathbf{Lin}_{\scrA}\smash{\widehat{\mathbf{Cat}}}^{\operatorname{L}}_{({\scriptstyle\infty},n)}$  is a symmetric monoidal $(n+1)$-category whose underlying symmetric monoidal category is equivalent to $\operatorname{Lin}_{\scrA}\CatVinftycocom[n]$ (\cite[Remark $5.3.5$]{stefanich2020presentable}).
\begin{warning}
When $n\geqslant1$, if $n\bm{\scrA}$ is a presentable monoidal $n$-category it is not obvious that an $n\bm{\scrA}$-module $n\bm{\scrC}$ is also enriched over $n\bm{\scrA}$, essentially because it it not known whether the monoidal structure on $(n+1)\PrLUtwo[n]$ is closed (see the proof of \cref{conj:infinityn} and \cref{conj:notenriched}). Therefore, in the following, we shall write $\smash{(n+1)\LModtwo_{n\bm{\scrA}}{\lp(n+1)\PrLUtwo[n]\rp}}$ for the $(n+1)$-category of presentable $n$-categories which are left tensored over $n\bm{\scrA}$, instead of $(n+1)\mathbf{Lin}_{n\bm{\scrA}}{\lp\PrLUtwo\rp}$. 
\end{warning}

\begin{defn}
	\label{def:npresentable}
The \textit{$(n+1)$-category of $\scrA$-linear presentable $n$-categories} is the full sub-$(n+1)$-category$$(n+1)\mathbf{Lin}_{\scrA}\mathbf{Pr}^{\mathrm{L}}_{({\scriptstyle\infty}, n)}\subseteq(n+1)\smash{\widehat{\mathbf{Cat}}}^{\operatorname{L}}_{({\scriptstyle\infty},n)}$$spanned by presentable $n$-categories (in the sense of \cref{def:nfoldmodules}). In particular, the underlying symmetric monoidal category of $(n+1)\mathbf{Lin}_{\scrA}\mathbf{Pr}^{\mathrm{L}}_{({\scriptstyle\infty}, n)}$ is equivalent to $\LinAPrLU[n]$ (\cite[Remark $5.3.7$]{stefanich2020presentable}).
\end{defn}
 
\begin{remark}
	\label{remark:comparisioninftytwo}
Unraveling all constructions, and using \cite[Remark $5.3.5$ and $5.3.7$]{stefanich2020presentable}, we  see that the $2$-categorical  enhancement  $2\mathbf{Lin}_{\scrA}\mathbf{Pr}^{\mathrm{L}}_{({\scriptstyle\infty}, 1)}$ relies on considering $\LinAPrLU$ as enriched over itself via its closed symmetric monoidal structure. In particular, thanks to \cref{remark:PrL}, the category $2\LinAPrLUtwo[1]$ as defined in \cref{def:npresentable} is equivalent to the $2$-categorical enhancement of $\LinAPrLU[1]$ we described earlier in \cref{remark:PrLModenrichment}.
\end{remark}
Having defined the $(n+1)$-category of $\scrA$-linear presentable $n$-categories, the definition of the $(n+1)$-category of local systems of $\scrA$-linear presentable $n$-categories on a space $X$ is straightforward.
\begin{defn}
The \textit{$(n+1)$-category of $\scrA$-linear presentable $n$-categories on $X$} is defined as 
 $$(n+1)\LocSysCattwo^n(X;\scrA)\coloneqq(n+1)\mathbf{Fun}_{n+1}{\lp X,\hsp (n+1)\mathbf{Lin_{\scrA}Pr}^{\operatorname{L}}_{({\scriptstyle\infty},n)}\rp}.$$
 \end{defn} 
\begin{remark}
Note that, as in \cref{remark:agreement}, this $(n+1)$-category is equivalent to the internal mapping object in $(n+2)\CatVtwoinfty[n+1]$ between $X$ (seen as an $(n+1)$-category) and $(n+1)\LinAPrLUtwo[n]$.
\end{remark}
We introduce the last bit of notations that we need in order to further categorify \cref{cor:maincor}.

\begin{construction}[{\cite[Section $5.2$]{stefanich2020presentable}}]
	\label{constr:highermodules}
	Let $k\in\NN_{\geqslant1}\cup\left\{\infinity\right\}$ and $n\leqslant k-1$ be integers, and let $\scrA$ be a presentably $\Ebb_k$-monoidal category. Let $A$ be an 
	$\Ebb_n$-algebra object in $\scrA$. We can give an inductive definition of the presentable  $(n+1)$-category $(n+1)\LModtwo^n_A$ of $n$-categorical $A$-modules.
\begin{enumerate}
\item For $n=0$, we simply define the category $\LMod^0_{A}(\scrA)$ to be $\smash{\LMod_A{\lp\scrA\rp}}$ and$$\LMod^0_{(-)}(\scrA)\coloneqq\LMod_{(-)}{\lp\scrA\rp}\colon\Alg_{\Ebb_{k}}(\scrA)\longrightarrow\Alg_{\Ebb_{k-1}}{\lp \LinAPrLU[1]\rp}$$to be the functor induced at the level of $\Ebb_{k}$-algebras by the strongly monoidal functor \eqref{functor:lmod}. 
\item For any $1\leqslant n\leqslant k$, we define the $(n+1)$-category $(n+1)\LModtwo_A^n(\scrA)$ to be the image of $A$ under the functor$$\Alg_{\Ebb_{k}}(\scrA)\longrightarrow\Alg_{\Ebb_{k-n-1}}{\lp \LinAPrLU[n+1]\rp}$$defined inductively as follows. It is the composition of the functor$$n\LModtwo^{n-1}_{(-)}(\scrA)\colon \Alg_{\Ebb_{k}}(\scrA)\longrightarrow\Alg_{\Ebb_{k-n}}{\lp\LinAPrLU[n]\rp}$$with the functor$$(n+1)\LModtwo_{(-)}^{\kappa_0}\colon\Alg_{\Ebb_{k-n}}{\lp\LinAPrLU[n]\rp}\longrightarrow\Alg_{\Ebb_{k-n-1}}{\lp \LinAPrLU[n+1]\rp}.$$The latter functor is induced by the functor \eqref{functor:compactmodules},  since for every cocompletely   $\Ebb_k$-monoidal category $\scrA$ the  assignment  $\scrA\mapsto\LMod_{\scrA}^{\operatorname{pr}}$ is functorial and strongly monoidal (\cite[Remark $5.1.13$]{stefanich2020presentable}).  This yields a strongly monoidal functor$$(n+1)\LModtwo^n_{(-)}(\scrA)\colon\Alg{\lp\LinAPrLU[n]\rp}\longrightarrow
\LinAPrLU[n+1].$$
In particular, for any $\Ebb_n$-algebra $A$ the $(n+1)$-category $(n+1)\LModtwo^n_A(\scrA)$ is a presentably $\Ebb_{k-n-1}$-monoidal $\scrA$-linear  $(n+1)$-category. If $\scrA$ is a symmetric monoidal category and $A$ is a \textit{commutative} algebra in $\scrA$, we shall  write simply $(n+1)\Modtwo^n_{A}(\scrA)$. Note that the latter is    a presentably \textit{symmetric} monoidal $\scrA$-linear $(n+1)$-category. 
\end{enumerate}
\end{construction}
\begin{defn}
\label{def:iteratedmodules}
We call $(n+1)\LModtwo^n_A$ the \textit{$(n+1)$-category of $n$-fold $A$-modules}. 
When $\scrA=\Sp$ is the category of spectra and $A$ is a commutative ring spectrum, we  set 
$$(n+1)\Modtwo^n_A\coloneqq (n+1)\Modtwo^n_A(\Sp).$$
and call it the \textit{$(n+1)$-category of $n$-fold $A$-modules}.
\end{defn}
\begin{remark}
\label{remark:functorsnenrichment}
Using \cite[Remark $5.2.4$]{stefanich2020presentable}, we can see that for $n\geqslant1$ the $(n+1)$-category $(n+1)\LModtwo^n_A(\scrA)$  is equivalent to   the $(n+1)$-category $(n+1)\LModtwo_{n\LModtwo^{n-1}_A(\scrA)}\PrLUtwo[n].$
So we can think of  $(n+1)\LModtwo^n_A(\scrA)$ as the $(n+1)$-category of  
 $A$-linear presentable $n$-categories. In particular, when $\scrA=\Sp$ is the category of spectra and $A$ is an $\Ebb_n$-ring spectrum, we will denote $(n+1)\LModtwo^n_A(\scrA)$ as $(n+1)\mathbf{Lin}_{A}\PrLUtwo[n].$
\end{remark}
The previous discussions provides all the ingredients to state the $n$-categorical generalization of \cref{thm:infinitytwothm}.
\begin{theorem}
	\label{conj:infinityn}
Let $n\geqslant 1$ be an integer, let $X$ be a pointed $n$-connected space (i.e., $\pi_k(X)\cong 0$ for every $k\leqslant n$), and let $\scrA$ be a presentably symmetric monoidal category. Then there exist equivalences of $(n+1)$-categories\begin{align*}
(n+1)\LocSysCattwo^n(X;\scrA)&\simeq(n+1)\mathbf{LMod}_{\Omega_*X}{\lp(n+1){\mathbf{Lin}_{\scrA}\mathbf{Pr}^{\mathrm{L}}}_{({\scriptstyle\infty}, n)}\rp}\\&\simeq
(n+1)\mathbf{LMod}_{n\LModtwo^{n-1}_{\Omega^{n+1}_*X}(\scrA)}{\lp(n+1){\mathbf{Lin}_{\scrA}\mathbf{Pr}^{\mathrm{L}}}_{({\scriptstyle\infty}, n)}\rp}.
\end{align*}
\end{theorem}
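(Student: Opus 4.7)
The plan is to proceed by induction on $n\geqslant 1$. The base case $n=1$ is exactly the content of \cref{thm:infinitytwothm}, noting that an $n$-connected space with $n=1$ is simply connected.

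For the inductive step, assuming the result for $n-1$, I first establish the first equivalence, which is an $(n+1)$-categorical analogue of \cref{cor:koszulpresentable}. Concretely, the goal is
\[
(n+1)\LocSysCattwo^n(X;\scrA) \simeq (n+1)\LModtwo_{\Omega_*X}\lp(n+1)\LinAPrLUtwo[n]\rp,
\]
which would follow by the same strategy as in \cref{monodromyeq}: working inside Stefanich's framework for cocomplete $(n+1)$-categories, the assignment $\LMod_{\Omega_*X}(-)$ needs to be shown to preserve colimits (analogue of \cref{lemma:modprL}), thereby reducing the claim to the case of a presentable coefficient category. The compatibility with the $(n+1)\LinAPrLUtwo[n]$-tensoring, which upgrades this from a $1$-categorical to an $(n+1)$-categorical equivalence, would follow exactly as in \cref{constr:enrichment1,constr:enrichment2} of the proof of \cref{thm:infinitytwothm}.

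For the second equivalence, I plan to follow a three-step chain that mirrors the $n=1$ proof but substitutes iterated monodromy data. First, by an $(n+1)$-categorical analogue of \cref{lemma:rectificationofmodules}, the fact that the unit of $(n+1)\LinAPrLUtwo[n]$ is $n\LinAPrLUtwo[n-1]$ lets us rewrite
\[
(n+1)\LModtwo_{\Omega_*X}\lp(n+1)\LinAPrLUtwo[n]\rp \simeq (n+1)\LModtwo_{\Omega_*X\otimes n\LinAPrLUtwo[n-1]}\lp(n+1)\LinAPrLUtwo[n]\rp.
\]
Second, by a higher analogue of \cref{prop:FunandTensor}, the $\Ebb_1$-algebra $\Omega_*X \otimes n\LinAPrLUtwo[n-1]$ is identified with $n\LocSysCattwo^{n-1}(\Omega_*X;\scrA)$. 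Third, since $X$ is $n$-connected the space $\Omega_*X$ is $(n-1)$-connected, so the inductive hypothesis applied to $\Omega_*X$ yields
\[
n\LocSysCattwo^{n-1}(\Omega_*X;\scrA) \simeq n\LModtwo_{(n-1)\LModtwo^{n-2}_{\Omega^{n+1}_*X}(\scrA)}\lp n\LinAPrLUtwo[n-1]\rp \simeq n\LModtwo^{n-1}_{\Omega^{n+1}_*X}(\scrA),
\]
where the last identification is \cref{remark:functorsnenrichment}. Composing these equivalences yields the second equivalence.

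I expect the main obstacle to lie in establishing the $(n+1)$-categorical analogues of \cref{prop:FunandTensor} and \cref{lemma:rectificationofmodules}, and especially in verifying that the resulting equivalences respect the $\Ebb_1$-monoidal structures (in the sense of \cref{prop:koszulismonoidal}) rather than merely the underlying $1$-categorical ones. This monoidality is exactly what ensures compatibility when passing to module categories at the next inductive stage. All such analogues should be provable within Stefanich's framework, structurally parallel to the $1$- and $2$-categorical arguments, but each invocation demands careful tracking through the iteration. A pleasant feature is that, in contrast to \cref{thm:main3}, this proof does not depend on \cref{conj:notenriched}: we work throughout with module (i.e., tensored-over) structures rather than with enrichments.
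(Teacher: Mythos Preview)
Your approach is essentially the same as the paper's: the paper also obtains the underlying $1$-categorical equivalences via the monodromy equivalence applied to the cocomplete category $\LinAPrLU[n]$ (\cref{lemma:auxiliary1}), then iterates the identification $\Omega_*X\otimes n\LinAPrLUtwo[n-1]\simeq n\LocSysCattwo^{n-1}(\Omega_*X;\scrA)\simeq n\LModtwo_{\Omega^2_*X}(\ldots)$ down to $\Omega^{n+1}_*X$ (\cref{lemma:auxiliary2}); your induction is exactly this iteration unwound.

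One point deserves sharpening. You write that the $(n+1)$-categorical upgrade ``would follow exactly as in \cref{constr:enrichment1,constr:enrichment2}'', and separately that the argument avoids \cref{conj:notenriched} because you ``work throughout with module structures rather than with enrichments''. But the statement of the theorem \emph{is} an equivalence of $(n+1)$-categories, so enrichment is unavoidable; and Constructions~\ref{constr:enrichment1}--\ref{constr:enrichment3} rely on a \emph{closed} symmetric monoidal structure to produce that enrichment. For $n\geqslant 3$ it is not known that $n\LinAPrLUtwo[n-1]$ is closed---this is precisely \cref{conj:notenriched}. The paper's maneuver (and the actual reason the proof is unconditional) is to run the enrichment argument in the ambient category $\operatorname{Lin}_{\scrA}\CatVinftycocom[n]$, which \emph{is} cocompletely closed symmetric monoidal, obtain the $(n+1)$-categorical equivalences there, and then restrict to the full sub-$(n+1)$-category spanned by $\kappa_0$-compact (i.e., presentable) objects. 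Your phrase ``working inside Stefanich's framework for cocomplete $(n+1)$-categories'' gestures at this, but you should make explicit that this passage to the cocomplete setting is what supplies the missing closedness and hence the enrichment, not merely a device for proving colimit-preservation.
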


\begin{remark}
For simplicity, in the statement of \cref{conj:infinityn}, we assume $\scrA$ to be a presentably symmetric monoidal category. This is also the case that arises more naturally in our intended applications. We remark however that it is sufficient to assume that  $\scrA$ is a presentably \textit{$\Ebb_n$-monoidal category}.  The proof of \cref{conj:infinityn} we shall give below applies equally well to this more general setting.
\end{remark}

The proof strategy is essentially the same we followed in \cref{thm:infinitytwothm}, except for one additional technical subtlety (see the proof of \cref{conj:infinityn} below). Namely we shall deduce our $n$-categorical statement from the fact that the underlying $1$-categories are equivalent   in a way that is compatible with the enrichment.   

Since $\LinAPrLU[n]$ is cocomplete (\cite[Section $5.1$]{stefanich2020presentable}), \cref{monodromyeq} allows us to deduce  immediately  the following result.  
 \begin{lemma}
 \label{lemma:auxiliary1}
Let $X$ be a pointed connected space, and let $\scrA$ be a presentably symmetric monoidal category. Let $\LocSysCat^n(X;\scrA)$  be  the underlying category of  $(n+1)\LocSysCattwo^n(X;\scrA)$. Then there exists an equivalence of categories$$\LocSysCat^n{\lp X;\scrA\rp}\simeq\LMod_{\Omega_*X}{\lp\LinAPrLU[n]\rp}.$$
 \end{lemma}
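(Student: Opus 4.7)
The plan is to unpack the definitions and reduce directly to the monodromy equivalence already established for cocomplete (but not necessarily presentable) coefficient categories, namely \cref{monodromyeq}. By construction,
\[
(n+1)\LocSysCattwo^n(X;\scrA) \coloneqq (n+1)\mathbf{Fun}_{n+1}{\lp X,\hsp (n+1)\mathbf{Lin_{\scrA}Pr}^{\operatorname{L}}_{({\scriptstyle\infty},n)}\rp}.
\]
Since $X$ is a space, it is a trivial $(n+1)$-category (i.e., lies in the essential image of the inclusions $\iota_k$ of \cref{remark:adjunction}), so by repeatedly invoking the adjunction $\iota_k\dashv\tau_{\leqslant k}$ of \eqref{adjunction:infinityn} as in \cref{remark:infinitynfunctors}, the underlying $1$-category of this functor $(n+1)$-category is naturally identified with $\Fun{\lp X,\hsp\tau_{\leqslant 1}(n+1)\mathbf{Lin_{\scrA}Pr}^{\operatorname{L}}_{({\scriptstyle\infty},n)}\rp}$.

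Next I identify the underlying $1$-category of $(n+1)\mathbf{Lin_{\scrA}Pr}^{\operatorname{L}}_{({\scriptstyle\infty},n)}$. By \cref{def:npresentable} (itself relying on \cite[Remark $5.3.7$]{stefanich2020presentable}), this underlying $1$-category is precisely Stefanich's category $\LinAPrLU[n]$ introduced in \cref{def:nfoldmodules}. Thus the underlying $1$-category of the $(n+1)$-category of local systems is
\[
\LocSysCat^n(X;\scrA)\simeq\Fun{\lp X,\hsp\LinAPrLU[n]\rp}\eqqcolon\LocSys{\lp X;\LinAPrLU[n]\rp}.
\]

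Finally, I invoke \cref{monodromyeq}. The key input is that although $\LinAPrLU[n]$ is not presentable for $n\geqslant 1$, it \emph{is} cocomplete: this is part of the structural results in Stefanich's theory (see \cite[Section $5.1$]{stefanich2020presentable}, recalled around \cref{parag:npresentable1cat}), where the category of presentable $\scrA$-linear $n$-categories is obtained as the $\kappa_0$-compact part of a cocompletely symmetric monoidal $\infty$-category. Applying \cref{monodromyeq} with coefficients in the cocomplete category $\LinAPrLU[n]$ then yields
\[
\LocSys{\lp X;\LinAPrLU[n]\rp}\simeq\LMod_{\Omega_*X}{\lp\LinAPrLU[n]\rp},
\]
which is exactly the desired equivalence.

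The only non-trivial step is the first one, namely checking that passing to the underlying $1$-category commutes with taking the $(n+1)$-functor category out of $X$. This is not really an obstacle because $X$ sits at the bottom of the categorical tower (it has no non-invertible $k$-morphisms for any $k\geqslant 1$); the iterated truncation/inclusion adjunctions from \cref{remark:adjunction} together with the inductive construction of enriched functor categories in \cite[§$6.1.3$]{hinichyoneda} handle this bookkeeping cleanly.
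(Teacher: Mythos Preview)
Your proposal is correct and follows essentially the same approach as the paper: the paper's proof is a one-line remark that since $\LinAPrLU[n]$ is cocomplete (by Stefanich's results), \cref{monodromyeq} applies directly. Your version simply spells out the bookkeeping of passing to the underlying $1$-category via the adjunctions of \cref{remark:adjunction} and \cref{remark:infinitynfunctors}, which the paper leaves implicit.
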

As $\LinAPrLU[n]$ is  cocomplete   and  symmetric monoidal we have  a canonical strongly monoidal and colimit-preserving functor\begin{align}
    \label{functor:prln}
    \scrS\longrightarrow\LinAPrLU[n].
\end{align}
Recall that this functor sends a space $X$ to the colimit over the constant diagram with shape $X$ with values in the monoidal unit of $\LinAPrLU[n]$  (that is, $n\LinAPrLUtwo[n-1]$), i.e.,
$$X\mapsto\colim_X n\LinAPrLUtwo[n-1].$$
\begin{lemma}
    \label{lemma:limitsandcolimits2}
 For all $n\geqslant 1$, the  functor \eqref{functor:prln} is equivalent to the functor $$n\LocSysCattwo^{n-1}(-;\scrA)\coloneqq n{\Funtwo}_n{\lp -,\hsp n\LinAPrLUtwo[n-1]\rp}.$$ 
\end{lemma}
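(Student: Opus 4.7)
The plan is to imitate closely the proof of \cref{prop:FunandTensor}, which handles the case $n=1$, and reduce the general $n$-categorical statement to a universal property argument. The category $\scrS$ is the unit of $\CatVinftycocom$ with respect to Lurie's tensor product, hence the initial object among presentably symmetric monoidal categories. Consequently, any symmetric monoidal and colimit-preserving functor out of $\scrS$ is determined, up to contractible choice, by the image of the point. So I will show that both functors at stake are strongly monoidal and cocontinuous, both sending $\{*\}$ to the unit $n\LinAPrLUtwo[n-1]$ of $\LinAPrLU[n]$; the equivalence then follows automatically.

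For the first functor, i.e.\ \eqref{functor:prln}, cocontinuity and strong monoidality are built into the definition. For the second functor $n\LocSysCattwo^{n-1}(-;\scrA)$, I would first identify it as the copowering functor $X \mapsto (n\LinAPrLUtwo[n-1])^{X}$ taking values in $\LinAPrLU[n]$. Since $(n+1)\LinAPrLUtwo[n]$ is a cocompletely closed symmetric monoidal $(n+1)$-category which is both tensored and cotensored over $\scrS$ (viewed as a trivial $(n+1)$-category via the chain of lax monoidal inclusions of \cref{remark:adjunction}), \cref{lemma:externalduality} endows this functor with a canonical lax monoidal structure and sends $\{*\}$ to the unit $n\LinAPrLUtwo[n-1]$.

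The main obstacle, and the step that requires the most care, is to upgrade lax monoidal and limit-preserving to strongly monoidal and colimit-preserving. Both of these reduce to an $n$-categorical analogue of \cref{lemma:limitsandcolimitsovergroupoids}: for every space $X$, seen as an $\infinity$-groupoid, the natural comparison map
\[
\colim_{X} n\LinAPrLUtwo[n-1] \overset{\simeq}{\longrightarrow} \lim_{X} n\LinAPrLUtwo[n-1]
\]
is an equivalence in $\LinAPrLU[n]$. I would prove this by induction on $n$. The inductive step uses the fact that, by \cref{def:nfoldmodules}, the category $\LinAPrLU[n]$ is defined as a category of modules, so that both limits and colimits can be computed after forgetting module structure along a conservative forgetful functor to $\LinAPrLU[n-1]$; this reduces the statement to the $(n-1)$-level. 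The base case $n=1$ is precisely \cref{lemma:limitsandcolimitsovergroupoids} (or its extension to $\CatVinftycocom$ proved in \cite{benmoshe2024categoricalambidexterity}). The key point underlying the induction is that equivalences in $\LinAPrLU[n]$ promote to adjoint equivalences and, since $X$ is a groupoid, the diagram of right adjoints $F^{\op}$ agrees with $F$ itself.

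Once this ambidexterity statement is in hand, cocontinuity follows by presenting any space $Y = \colim_{Y}\{*\}$ as a colimit of the constant diagram at the point, and computing
\[
n\LocSysCattwo^{n-1}(Y;\scrA) \simeq \lim_{Y} n\LinAPrLUtwo[n-1] \simeq \colim_{Y} n\LinAPrLUtwo[n-1],
\]
which is precisely the image of $Y$ under \eqref{functor:prln}. Strong monoidality follows by the Fubini argument for homotopy colimits exactly as in the proof of \cref{prop:FunandTensor}: for spaces $X$ and $Y$, the comparison map
\[
\alpha_{XY}\colon \colim_{X} n\LinAPrLUtwo[n-1] \otimes \colim_{Y} n\LinAPrLUtwo[n-1] \longrightarrow \colim_{X\times Y} n\LinAPrLUtwo[n-1]
\]
is an equivalence because the monoidal structure on $\LinAPrLU[n]$ preserves colimits in each variable. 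Invoking the universal property of $\scrS$ then yields the claimed natural equivalence of functors $\scrS \to \LinAPrLU[n]$.
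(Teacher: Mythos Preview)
Your overall strategy matches the paper's: both reduce to showing that $n\LocSysCattwo^{n-1}(-;\scrA)$ is strongly monoidal and cocontinuous, and both identify the crucial ingredient as an $n$-categorical ambidexterity statement over groupoids generalising \cref{lemma:limitsandcolimitsovergroupoids}. The difference lies entirely in how that ambidexterity is established.

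Your inductive argument has a genuine gap. You claim that $\LinAPrLU[n]$, being defined as a category of modules, admits a conservative forgetful functor to $\LinAPrLU[n-1]$ which creates limits and colimits. But forgetting module structure from $\LMod_{\scrB}(\scrC)$ lands in $\scrC$, not in $\scrB$; here $\scrB = \LinAPrLU[n-1]$ and $\scrC = \CatVinftycocom$, so the forgetful functor goes to $\CatVinftycocom$ (and, after restricting to $\kappa_0$-compact objects, lands in $\PrLU$), not to $\LinAPrLU[n-1]$. There is therefore no induction on $n$ of the form you describe: at best you get a one-step reduction to ambidexterity in $\CatVinftycocom$, for which you would need the result of \cite{benmoshe2024categoricalambidexterity} rather than just the base case \cref{lemma:limitsandcolimitsovergroupoids}. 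Even then you must argue separately that the limit of a groupoid-shaped diagram of $\kappa_0$-compact objects remains $\kappa_0$-compact, since it is not known in general that $\LinAPrLU[n]$ admits all small limits for $n\geqslant 2$.

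The paper sidesteps both issues by invoking \cite[Theorem 5.5.14]{stefanich2020presentable} directly: $(n+1)\PrLUtwo[n]$ admits limits of \emph{left adjointable} diagrams, and for such diagrams the limit agrees with the colimit of the opposite diagram. Since every diagram indexed by a groupoid is left adjointable and canonically equivalent to its opposite, ambidexterity follows in one stroke, with no induction and no appeal to the ambient cocomplete $n$-category. This is the key lemma you are missing.
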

\begin{proof}
This is a  higher categorical generalization of \cref{prop:FunandTensor}. Assume first for ease of exposition that we are in the absolute case, where $\scrA=\scrS$ is the category of spaces. 
The core ingredient of the proof of \cref{prop:FunandTensor} was the so-called \textit{passage to the adjoints property} of $\PrLU$: a colimit over a diagram of presentable categories can be computed as the limit over the opposite diagram obtained after passing to the right adjoints. This implies the ambidexterity of limits and colimits over groupoids inside $\PrLU$ (\cref{lemma:limitsandcolimitsovergroupoids}). We establish the $n$-categorical analogue of this fact in the following Lemma.
\begin{lemma}
\label{lemma:limitsandcolimitsovergroupoids2}
Let $\scrA$ be a presentably symmetric monoidal category, let $X$ be a space, and let $F\colon X\to(n+1)\LinAPrLUtwo[n]$ be a diagram of presentable $\scrA$-linear $n$-categories of shape $X$. Then, there is a natural equivalence $\lim F\simeq\colim F$ in $(n+1)\LinAPrLUtwo[n]$.
\end{lemma}
\begin{proof}
Contrarily to the case when $n=1$, it is still unknown whether $(n+1)\PrLUtwo[n]$ admits all small limits. However, Stefanich proves that  it admits all limits of \textit{left adjointable diagrams} -- i.e., diagrams $K\to\PrLU[n]$ arising from an opposite diagram $K^{\op}\to\PrLU[n]$ by taking left adjoints. In this case, moreover, the limit over $K$ agrees with the colimit over $K^{\op}$ (\cite[Theorem $5.5.14$]{stefanich2020presentable}).

It is clear that every diagram over a small space  $X$ is left adjointable, and that it is canonically equivalent to its opposite diagram via the involution $X\simeq X^{\op}$. So, we have$$\colim_X n\PrLUtwo[n-1]\simeq\lim_X n\PrLUtwo[n-1]\simeq n\Funtwo_n{\lp X,\hsp n\PrLUtwo[n-1]\rp}.$$
\end{proof}
Thus, \cref{lemma:limitsandcolimitsovergroupoids2} implies the statement of \cref{lemma:limitsandcolimits2} in the absolute case. In turn, this implies the case for an arbitrary presentably symmetric monoidal category $\scrA$ of coefficients simply by base change.
\end{proof}
\begin{lemma}
 \label{lemma:auxiliary2}
Let $n\geqslant1$ be an integer, let $X$ be a pointed $n$-connected space and let $\scrA$ be a presentably symmetric monoidal category. Let $\Omega^{n+1}_*X$ denote the $(n+1)$-fold loop space of $X$, and let $\Omega^{n+1}_*X\otimes\boldone_{\scrA}$ denote the $\Ebb_{n+1}$-algebra in $\scrA$ obtained  by applying to $\Omega^{n+1}_*X$  the unique strongly monoidal and colimit preserving functor $\scrS\to\scrA$. Then, there is an equivalence of categories$$\LMod_{\Omega_*X}{\lp\LinAPrLU[n]\rp}\simeq\operatorname{LMod}_{n\LModtwo^{n-1}_{\Omega^{n+1}_*X\otimes\boldone_{\scrA}}}\PrLU[n].$$
\end{lemma}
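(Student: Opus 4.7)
The plan is to proceed by induction on $n$. The base case $n=1$ is exactly \cref{cor:maincor}: the right-hand side unfolds to $\operatorname{LMod}_{\LMod_{\Omega^{2}_*X\otimes\boldone_{\scrA}}(\scrA)}\PrLU$, which matches. For the inductive step, I assume the statement for $n-1$ and for all presentably symmetric monoidal targets, and establish it at level $n$.

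First, since $\LinAPrLU[n]$ is a cocompletely symmetric monoidal category with monoidal unit $n\LinAPrLUtwo[n-1]$, applying \cref{lemma:rectificationofmodules} with ``$\scrA$'' there set to $\LinAPrLU[n]$ and $G\coloneqq \Omega_*X$ yields an equivalence of $\Ebb_1$-monoidal categories
$$\LMod_{\Omega_*X}{\lp\LinAPrLU[n]\rp}\simeq\LMod_{\Omega_*X\otimes n\LinAPrLUtwo[n-1]}{\lp\LinAPrLU[n]\rp}.$$
Next, \cref{lemma:limitsandcolimits2} identifies the essentially unique strongly monoidal and colimit-preserving functor $\scrS\to\LinAPrLU[n]$ with $\LocSysCat^{n-1}(-;\scrA)$. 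In particular,
$$\Omega_*X\otimes n\LinAPrLUtwo[n-1]\simeq\LocSysCat^{n-1}(\Omega_*X;\scrA)$$
as objects of $\LinAPrLU[n]$. By the same monoidal enhancement argument as in the proof of \cref{prop:koszulismonoidal}, this equivalence upgrades to one of $\Ebb_1$-algebras in $\LinAPrLU[n]$, since $\LocSysCat^{n-1}(-;\scrA)$ is strongly monoidal and carries the $\Ebb_1$-monoid $\Omega_*X$ to an $\Ebb_1$-algebra.

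Second, since $X$ is $n$-connected the space $\Omega_*X$ is $(n-1)$-connected, hence in particular connected. Applying \cref{lemma:auxiliary1} at level $n-1$ to $\Omega_*X$ gives
$$\LocSysCat^{n-1}(\Omega_*X;\scrA)\simeq\LMod_{\Omega^2_*X}{\lp\LinAPrLU[n-1]\rp},$$
and then invoking the inductive hypothesis applied to the $(n-1)$-connected pointed space $\Omega_*X$ (noting $\Omega^n_*(\Omega_*X)\simeq\Omega^{n+1}_*X$) yields
$$\LMod_{\Omega^2_*X}{\lp\LinAPrLU[n-1]\rp}\simeq\LMod_{(n-1)\LModtwo^{n-2}_{\Omega^{n+1}_*X\otimes\boldone_{\scrA}}}\PrLU[n-1].$$
By \cref{remark:functorsnenrichment} the right-hand side is precisely the underlying $\Ebb_1$-monoidal category of the presentable $n$-category $n\LModtwo^{n-1}_{\Omega^{n+1}_*X\otimes\boldone_{\scrA}}$ viewed as an object of $\LinAPrLU[n]$.

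Stringing together these two chains of $\Ebb_1$-monoidal equivalences gives
$$\LMod_{\Omega_*X}{\lp\LinAPrLU[n]\rp}\simeq\LMod_{n\LModtwo^{n-1}_{\Omega^{n+1}_*X\otimes\boldone_{\scrA}}}{\lp\LinAPrLU[n]\rp},$$
and, since $n\LModtwo^{n-1}_{\Omega^{n+1}_*X\otimes\boldone_{\scrA}}$ is $\scrA$-linear by construction (the functor \eqref{functor:compactmodules} lands in $\scrA$-linear categories by naturality), taking modules over it in $\LinAPrLU[n]$ agrees with taking modules in $\PrLU[n]$. This establishes the desired equivalence. The main obstacle in this argument is to ensure that each intermediate equivalence is genuinely an equivalence of $\Ebb_1$-algebras (and not just of underlying objects), so that passing to module categories is legitimate at every step; this reduces to the $n$-categorical monoidal enhancement of the Koszul duality equivalence encoded in \cref{lemma:limitsandcolimits2} combined with a Fubini-type compatibility between Day convolution and the tensoring $\scrS\to\LinAPrLU[n]$.
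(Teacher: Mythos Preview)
Your proof is correct and follows essentially the same route as the paper's own argument: both peel off one loop at a time by combining the linearization \cref{lemma:rectificationofmodules}, the identification of the canonical tensoring $\scrS\to\LinAPrLU[n]$ with $\LocSysCat^{n-1}(-;\scrA)$ from \cref{lemma:limitsandcolimits2}, and the strongly monoidal upgrade of the Koszul equivalence in the spirit of \cref{prop:koszulismonoidal}, then iterate using the connectivity of $\Omega^k_*X$. The only difference is cosmetic: you phrase the iteration as an explicit induction on $n$, whereas the paper unwinds it directly; the key technical point you flag (that each intermediate equivalence must be promoted to an equivalence of $\Ebb_1$-algebras before passing to module categories) is exactly the issue the paper addresses by invoking the strongly monoidal naturality of $(n+1)\LModtwo_{\Omega_*(-)}\simeq(n+1)\LocSysCattwo^n(-;\scrA)$.
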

\begin{proof}
 Combining \cref{lemma:auxiliary1} and \cref{lemma:limitsandcolimits2}, we obtain  equivalences of categories
\begin{align*}
\LocSysCat^n(X;\scrA)\simeq\LMod_{\Omega_*X}{\lp\LinAPrLU[n]\rp}&\simeq\operatorname{Lin}_{n\LocSysCattwo^{n-1}(\Omega_*X;\scrA)}\PrLU[n].
\end{align*} 
Note  that in the right hand side of the above chain of equivalences we can replace $n\LocSysCattwo^{n-1}(\Omega_*X;\scrA)$ with $\smash{n\LModtwo_{\Omega^2_*X}{\lp n\LinAPrLUtwo[n-1]\rp}}$. Indeed, arguing as in \cref{prop:koszulismonoidal}, we can deduce  that for any integer $n\geqslant1$ there is an equivalence 
\begin{equation*}
(n+1)\LModtwo_{\Omega_*(-)}{\lp (n+1)\LinAPrLUtwo[n]\rp}\simeq(n+1)\LocSysCattwo^n(-;\scrA)
\end{equation*}
of strongly monoidal functors from $\scrS_*^{\geqslant1}$ to $\operatorname{Lin}_{(n+1)\mathbf{Lin}_{\scrA}\mathbf{Pr}^{\operatorname{L}}_{({\scriptscriptstyle\infty},n)}}\CatVinftycocom$. 
 This implies that  $\LocSysCat^{n-1}(\Omega_*X;\scrA)$ and $\smash{\LMod_{\Omega^2_*X}{\lp\LinAPrLU[n-1]\rp}}$ are equivalent \textit{as $\Ebb_1$-monoidal categories}. Since $\Omega_*^kX$ is always $(n-k)$-connected for every $0\leqslant k \leqslant n$, we can iterate this argument and obtain an equivalence of categories between $\LocSysCat^n{\lp X;\scrA\rp}$ and the category of $\scrA$-linear presentable $n$-categories which are presentably left tensored over the presentable $n$-category of iterated left modules over $\Omega_*^nX$. Unraveling all definitions, we see that this $n$-category agrees precisely with the object $\LMod^{n-1}_{\Omega^{n}_*X}$ constructed in \cref{constr:highermodules}, hence we can conclude as desired.
\end{proof}

\begin{proof}[Proof of \cref{conj:infinityn}]
We would like to conclude as in the proof of \cref{thm:infinitytwothm} by combining the following two facts: first, the fact that the underlying 1-categories are equivalent, as proved in   \cref{lemma:auxiliary1,lemma:auxiliary2}; second, the observation that these equivalences are compatible with the enrichment over $n\PrLUtwo[n-1]$. 
There is however one subtlety that arises when $n \geqslant 3$ and that requires some extra care.  Namely, as mentioned in \cite[Remark 1.1.3]{stefanich2020presentable}, it is  not  known  whether the mapping objects in a presentable $n$-category $\scrC$ are \textit{presentable} $(n-1)$-categories. In particular, it is not  known whether the symmetric monoidal structure of $n\PrLUtwo[n-1]$ is closed.

We can fix the issue as follows. The category $\LinAPrLU[n]$ is a symmetric monoidal subcategory of the category $\mathrm{Lin}_{\scrA}\CatVinftycocom[n]$, which on the other hand is cocompletely \textit{closed} symmetric monoidal. So, up to enlarging appropriately our universe, \cref{monodromyeq} and \cref{lemma:rectificationofmodules} imply analogous equivalences
\begin{align*}
\Fun_n{\lp X,\hsp\mathrm{Lin}_{\scrA}\CatVinftycocom[n]\rp}&\simeq \LMod_{\Omega_*X}{\lp\mathrm{Lin}_{\scrA}\CatVinftycocom[n]\rp}\\&\simeq \LMod_{\Omega_*X\otimes n\LinAPrLUtwotiny[n-1]}{\lp\CatVinftycocom[n]\rp}
\end{align*}
in the more general setting of \textit{cocomplete} $\scrA$-linear $n$-categorical local systems over $X$. Now, all these objects inherit a \textit{closed} symmetric monoidal structure providing them an enrichment over themselves, hence over $\mathrm{Lin}_{\scrA}\CatVinftycocom[n]$ via a symmetric monoidal functor out of $\mathrm{Lin}_{\scrA}\CatVinftycocom[n]$: this is done exactly as in the $2$-categorical case presented in Constructions \ref{constr:enrichment1}, \ref{constr:enrichment2} and \ref{constr:enrichment3}. Then, it is straightforward to see that the above equivalences commute with the coaugmentation functors coming from $\mathrm{Lin}_{\scrA}\CatVinftycocom[n]$. In particular, they preserve the $\mathrm{Lin}_{\scrA}\CatVinftycocom[n]$-enrichment and hence can be promoted to $(n+1)$-categorical equivalences.

Since these equivalences obviously preserve objects whose underlying cocomplete $\LinAPrLU[n-1]$-module category is $\kappa_0$-compact, the $(n+1)$-categorical equivalences nicely restrict to the sub-$(n+1)$-categories appearing in the statement of \cref{conj:infinityn}. This concludes the proof that the $1$-categorical equivalence provided by \cref{lemma:auxiliary1,lemma:auxiliary2} can be enhanced to an $(n+1)$-categorical equivalence.

\end{proof}

\begin{parag}
\label{remark:nconnectedness}
Before proceeding, we would like to comment on the role played by the connectedness assumptions  in the statement of      \cref{conj:infinityn}.  In fact, it is possible to formulate a general dictionary relating 
$n$-categorical local systems and monodromy data for any space $X$. We will not do so explicitly  in this article, as the statement is  more cumbersome than  \cref{conj:infinityn}, and it is not relevant for our intended applications to Koszul duality.

However,  we shall try to clarify what is  involved in such an extension. To this effect we will prove, first,    \cref{prop:productmonoidalcategories} below. Equipped with \cref{prop:productmonoidalcategories} it is possible to state and prove a generalization of 
\cref{conj:infinityn} which applies to all spaces $X$  independently on connectedness assumptions.  
However,  formulating the correct statement is somewhat awkward. We    give an idea of these subtleties in the simple case of $X=S^1$ in \cref{lucwts}. 
\begin{proposition}
\label{prop:productmonoidalcategories}
Let $n\geqslant 1$ be a positive integer and let $n\bm{\scrA}\coloneqq\prod_{\alpha\in A}n\bm{\scrA}_{\alpha}$ be a small product of presentably symmetric monoidal $n$-categories. Then we have an equivalence of categories
\[
\mathrm{Mod}_{n\bm{\scrA}}\PrLU[n]\simeq\prod_{\alpha\in A}\mathrm{Mod}_{n\bm{\scrA}_{\alpha}}\PrLU[n].
\]
\end{proposition}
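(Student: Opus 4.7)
The strategy is to reduce the statement to the general principle that the formation of module categories is compatible with products of the coefficient algebra. The key observation is that $\PrLU[n]$, despite being an $(n+1)$-categorical construct, has an underlying $1$-category which is itself a presentably symmetric monoidal $1$-category in virtue of \cite[Section $5$]{stefanich2020presentable}. This allows us to apply standard results about modules in presentably symmetric monoidal categories, working entirely at the $1$-categorical level.

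First, I would construct the comparison functor $\Phi\colon \mathrm{Mod}_{n\bm{\scrA}}\PrLU[n] \to \prod_{\alpha} \mathrm{Mod}_{n\bm{\scrA}_{\alpha}}\PrLU[n]$ induced by base change along the projections $\pi_{\alpha}\colon n\bm{\scrA} \to n\bm{\scrA}_{\alpha}$: each projection is a symmetric monoidal left adjoint in $(n+1)\PrLUtwo[n]$, so base change gives a well-defined functor on module categories, and the universal property of the product assembles them into $\Phi$. To conclude that $\Phi$ is an equivalence, I would invoke the standard principle that the module-category functor $\mathrm{Mod}_{(-)}(\scrC)\colon \CAlg(\scrC) \to \CatVinfty$ preserves small limits for any presentably symmetric monoidal $1$-category $\scrC$ (compare with \cite[Corollary $4.2.3.3$]{ha} and \cite[Theorem $4.8.5.11$]{ha}). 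Applied to $\scrC = \PrLU[n]$ and to the diagram $\{n\bm{\scrA}_{\alpha}\}_{\alpha \in A}$ in $\CAlg(\PrLU[n])$, this yields the desired equivalence, since products are limits over discrete diagrams and products in $\CAlg(\PrLU[n])$ agree with the corresponding products in $\PrLU[n]$ (as $\CAlg$ commutes with limits, see \cite[Corollary $3.2.2.5$]{ha}).

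The main obstacle is ensuring that the cited $1$-categorical results genuinely apply to $\PrLU[n]$, rather than only to the $n$-categorical structure sitting above it. This is not immediately obvious given the $n$-categorical nature of the construction, but it is guaranteed by \cite{stefanich2020presentable}, where $\PrLU[n]$ is established as a presentably symmetric monoidal $\infty$-category in the standard sense, with tensor product, products, and commutative algebras all computed in the usual fashion. Once this reduction is in place, the argument is essentially formal.
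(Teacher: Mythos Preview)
Your argument has a genuine gap at its central step. The ``standard principle'' you invoke---that $\Mod_{(-)}(\scrC)\colon\CAlg(\scrC)\to\CatVinfty$ preserves small limits (and hence infinite products)---is not a theorem, and in fact is false already in the decategorified setting. For an infinite product $R=\prod_{\alpha}R_{\alpha}$ of ordinary commutative rings, $\Mod_{R}$ is \emph{not} equivalent to $\prod_{\alpha}\Mod_{R_{\alpha}}$; the paper makes exactly this point in the Remark immediately following the Proposition. The references you cite from \cite{ha} do not establish what you need: Corollary~4.2.3.3 concerns limits \emph{inside} a fixed module category, not the behaviour of the assignment $A\mapsto\Mod_{A}$, and Theorem~4.8.5.11 is about the adjunction between $\LMod$ and taking endomorphisms of a distinguished object. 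So the formal reduction you propose does not go through.

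There is also a secondary issue: the claim that the underlying $1$-category of $\PrLU[n]$ is presentably symmetric monoidal in the standard sense is at best unclear and false for $n=1$ (the category $\PrLU$ is famously not presentable). Stefanich shows it is \emph{cocompletely} symmetric monoidal, which is what the paper uses. The paper's proof does not appeal to any limit-preservation of $\Mod_{(-)}$. Instead it runs a Barr--Beck--Lurie monadicity argument: both sides carry forgetful functors to $\PrLU[n]$, and the key non-formal input is that in $\PrLU[n]$ small products coincide with small coproducts (a consequence of the ambidexterity over groupoids established in \cref{lemma:limitsandcolimitsovergroupoids2}). This is precisely the special feature of the categorified setting that makes the infinite-product statement true here while it fails for ordinary rings, and your proposal does not invoke it.
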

\begin{remark}
Note that in the de-categorified setting, the analogue of \cref{prop:productmonoidalcategories} is false as soon as the set of indices $A$ is not finite. This boils down to the well-known difference between the spectrum of an infinite product of commutative rings, and the infinite disjoint union of spectra of commutative rings. 
\end{remark}
\begin{proof}[Proof of \cref{prop:productmonoidalcategories}]
There is a natural functor
\[
\mathrm{Mod}_{n\bm{\scrA}}\PrLU[n]\longrightarrow\prod_{\alpha}\mathrm{Lin}_{n\bm{\scrA}_{\alpha}}\PrLU[n]
\]
induced by base change along the obvious projections $n\bm{\scrA}\to n\bm{\scrA}_{\alpha}.$ Moreover, on both sides we have forgetful functors\[
\oblv_{n\scrA}\colon\mathrm{Mod}_{n\bm{\scrA}}\PrLU[n]\longrightarrow\PrLU[n]
\]
and
\[
\prod\circ\langle\oblv_{n\bm{\scrA}_{\alpha}}\rangle_{\alpha}\colon\prod_{\alpha}\mathrm{Mod}_{n\bm{\scrA}_{\alpha}}\PrLU[n]\longrightarrow\prod_{\alpha}\PrLU[n]\longrightarrow\PrLU[n].
\]
It is not difficult to see that both forgetful functors are monadic over $\PrLU[n]$. In the first case, this is obvious; in the second case, we use the following facts together with Barr--Beck--Lurie monadicity.
\begin{itemize}
\item The functor $\prod\circ\langle\oblv_{n\bm{\scrA}_{\alpha}}\rangle_{\alpha}$ admits obviously a left adjoint, given by the assignment $n\bm{\scrC}\mapsto \left\{n\bm{\scrC}\otimes n\bm{\scrA}_{\alpha}\right\}_{\alpha\in A}.$
\item It follows from \cref{lemma:limitsandcolimitsovergroupoids2} that products are the same as coproducts inside $\PrLU[n]$, so they commute straightforwardly with \textit{all colimits}.
\item Since $\PrLU[n]$ is pointed, the operation of taking products of term-wise conservative functors is again conservative. (The proof of this fact is provided in \cite[Lemma $2.1.7$]{Pascaleff_Pavia_Sibilla_Koszul}.)
\end{itemize}
Now note that the diagram obtained by passing to the right adjoints commute straightforwardly: indeed, for every index $\alpha$ the natural $n$-functor
\[
n\bm{\scrC}\otimes n\bm{\scrA}\otimes_{n\bm{\scrA}}n\bm{\scrA}_{\alpha}\longrightarrow n\bm{\scrC}\otimes n\bm{\scrA}_{\alpha}
\]
is clearly an equivalence. So we conclude by  \cite[Corollary $4.7.3.16$]{ha}.
\end{proof}

As we mentioned, using \cref{prop:productmonoidalcategories}  it is possible 
to remove all connectedness assumptions from the statement of \cref{conj:infinityn}. This requires  fixing base points on each connected component of $X$ and of $\Omega^k_*X$ for all $k$'s ranging from $1$ to $n$; and then transferring the Day convolution monoidal structure from $\LocSysCat^{n-k+1}(\Omega^{k-1}_*X;\scrA)$ to $\prod_{\alpha}\LMod_{\Omega_*^kX_{\alpha}}(\LinAPrLU[n-k+1])$ under the equivalence of categories given by \cref{prop:productmonoidalcategories}. This is the most delicate point: as \cref{lucwts} makes apparent the statement, though not more difficult, becomes more involved.
 \begin{exmp}
\label{lucwts}
 Consider the case $n=2$ and $X=S^1$. Then $\Omega_*S^1\simeq \ZZ$ and we would like to obtain equivalences
\begin{align*}
3\LocSysCattwo^2(X;\scrA)&\simeq3\Modtwo_{\ZZ}\lp3\LinAPrLUtwo[2]\rp\simeq\prod_{n\in\ZZ}3\LinAPrLUtwo[2].
\end{align*}
Note that we have a chain of equivalences
\begin{align*}
    3\Modtwo_{\ZZ}{\lp3\LinAPrLUtwo[2]\rp}&\overset{\ref{prop:FunandTensor}}{\simeq}3\Modtwo_{2\LocSysCattwo(\ZZ,\scrA)}{\lp3\LinAPrLUtwo[2]\rp}\\&\simeq3\Modtwo_{\prod_n2\LocSysCattwo(\left\{n\right\};\scrA)}{\lp3\LinAPrLUtwo[2]\rp}\\&\simeq3\Modtwo_{\prod_n2\LinAPrLUtwo}{\lp3\LinAPrLUtwo[2]\rp}\\&\simeq\prod_{n\in\ZZ}3\mathbf{Mod}_{2\LinAPrLUtwo}\lp 3\LinAPrLUtwo[2]\rp\simeq\prod_{n\in\ZZ}3\LinAPrLUtwo[2].
\end{align*}
However, note that in the second equivalence we replaced $2\LocSysCattwo(\ZZ;\scrA)$, equipped with the Day convolution monoidal structure (which corresponds to the K\"{u}nneth-like monoidal structure on the category of graded objects of $2\LinAPrLUtwo$), with the product $\prod_n2\LinAPrLUtwo$, which is naturally endowed with a \textit{point-wise} monoidal structure.
\end{exmp}

In general, if $G$ is a topological monoid, the equivalence of categories$$\LocSysCat^n(G;\scrA)\simeq\prod_{\substack{G_{\alpha} \text{ connected}\\\text{component}}}\LocSysCat^n(G_{\alpha};\scrA)\simeq\prod_{\substack{G_{\alpha} \text{ connected}\\\text{component}}}\LMod_{\Omega_*G_{\alpha}}\lp\LinAPrLU[n]\rp$$ is not monoidal: on the right hand side we have the monoidal structure induced component-wise by the relative tensor product over the $\Ebb_2$-monoid $n\LocSysCattwo^{n-1}\lp\Omega_*G_{\alpha};\scrA\rp$, while on the left hand side we have the Day convolution monoidal structure -- which is the one we have to consider in order to obtain the correct $(n+1)$-category of presentable $n$-categories with an action of $G$. So one has to  impose the latter monoidal structure on the product of $\LocSysCat^n(G_{\alpha};\scrA)$ in order to obtain the desired generalization of \cref{conj:infinityn}. Further, for higher $n$, this issue gets compounded, as it arises  at each iterated application of the (component-wise) based loop space: unless, of course, the space $X$ is $n$-connected. This is the reason we assumed $n$-connectedness in \cref{conj:infinityn}: it allows us to bypass these issues, and   yields a much cleaner statement.  
\end{parag}

\subsection{Topological actions on \texorpdfstring{$n$}{n}-categories and higher Hochschild cohomology}
\label{sec:topactionncat}
In this Section we will generalize \cref{lemma:hochschildand2modules}, and hence Teleman's \cref{thm:teleman}, to the $n$-categorical setting. The possibility of proving such a statement was  already suggested by Teleman in \cite[Remark $2.8$]{teleman}. We stress however that our main result in this section (\cref{lemma:hochschildand2modules}) is conditional on the validity of an expected property of presentable $n$-categories which is  as yet conjectural.  Namely, as we discussed in the proof of \cref{conj:infinityn},  it is not known whether the symmetric monoidal structure on $n\PrLUtwo[n-1]$ is closed when $n \geqslant 3$. Our proof strategy depends in a crucial way on the assumption that this claim holds.  For clarity we formulate it explicitly as the following conjecture.
\begin{conj}
\label{conj:notenriched}
Let $\scrA$ be presentably symmetric monoidal category, and let $n\geqslant2$ be an integer. If  $n \bm{\scrC} $ and $n \bm{\scrD}$  are two presentably $\scrA$-linear $n$-categories, then the $\scrA$-linear $n$-category 
$$n\Funtwo^{\mathrm{L}}_{(n+1)\LinAPrLUtwotiny[n]}(n\bm{\scrC} ,\hsp n\bm{\scrD} )$$ of $\scrA$-linear colimit-preserving $n$-functors between $n \bm{\scrC} $ and $n\bm{\scrD} $ is presentable. In particular, it serves as a mapping object in $n\LinAPrLUtwo$.
\end{conj}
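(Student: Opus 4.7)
The natural strategy is induction on $n$, with the base case $n=1$ being the classical fact that $\FuninL_{\scrA}(\scrC,\scrD)$ is $\scrA$-linear presentable whenever $\scrC,\scrD$ are, which follows from the adjoint functor theorem together with the self-enrichment of $\LinAPrLU$ discussed in \cref{remark:PrLModenrichment}. For the inductive step, I would exploit the fact that the ambient cocomplete category $\mathrm{Lin}_{\scrA}\CatVinftycocom[n]$ \emph{is} closed symmetric monoidal (this is built into its construction in \cref{parag:infinitynpres}), so there is a well-defined internal mapping object $n\Funtwo^{\mathrm{L}}_{n\operatorname{Lin}_{\scrA}\CatVinftycocomtiny[n-1]}(n\bm{\scrC},n\bm{\scrD})$ lying a priori only in $\mathrm{Lin}_{\scrA}\CatVinftycocom[n]$. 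Since by definition $\LinAPrLU[n]\subseteq\mathrm{Lin}_{\scrA}\CatVinftycocom[n]$ is the full subcategory of $\kappa_0$-compact objects, the conjecture reduces to the compactness statement: if $n\bm{\scrC},n\bm{\scrD}$ are $\kappa_0$-compact $(n-1)\LinAPrLUtwo[n-1]$-modules in $\CatVinftycocom$, then so is the internal $n$-functor category between them.

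To attack this compactness claim, I would first try to reduce to the case where $n\bm{\scrC}$ is ``free'', i.e., of the form $(n-1)\LinAPrLUtwo[n-1]\otimes k\bm{\scrE}$ for some small $k\bm{\scrE}$, using that presentable $n$-categories are generated under $\kappa_0$-small colimits by such objects (this mirrors Stefanich's arguments in \cite[§5]{stefanich2020presentable}, analogous to the well-known description of presentable $1$-categories as accessible localizations of presheaf categories). For free source $n\bm{\scrC}$, the internal mapping object should simplify via an Eilenberg--Watts-type identification $n\Funtwo^{\mathrm{L}}(n\bm{\scrC},n\bm{\scrD})\simeq n\Funtwo_n(k\bm{\scrE},n\bm{\scrD})$, and one would then argue inductively that this is again $\kappa_0$-compact, using that small limits of presentable $n$-categories indexed by left-adjointable diagrams remain presentable (Stefanich's \cite[Theorem~5.5.14]{stefanich2020presentable}, also invoked in the proof of \cref{lemma:limitsandcolimitsovergroupoids2}). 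A parallel, more conceptual route would be to show that every presentable $n$-category is dualizable with presentable dual; then $n\Funtwo^{\mathrm{L}}(n\bm{\scrC},n\bm{\scrD})\simeq n\bm{\scrC}^{\vee}\otimes n\bm{\scrD}$, and presentability follows immediately since $\LinAPrLU[n]$ is closed under the relative tensor product.

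The main obstacle, and the reason the statement remains a conjecture, is precisely this compactness/dualizability step at the inductive level. In the $1$-categorical setting one exploits either explicit presentation by generators and relations, or the fact that $\PrLU\to\CatVinftycocom$ has very good properties relating compact objects to accessibility of right adjoints; neither input has a clean $n$-categorical analogue yet. Concretely, one would need a good theory of $\kappa_0$-accessible $n$-functors between presentable $n$-categories, together with an analogue of the characterization of compact objects in $\LMod_R$ as retracts of finite free modules, which has not been systematically developed in Stefanich's framework. I expect that a full resolution would require either (i) proving a Schwede--Shipley-style recognition theorem identifying $\LinAPrLU[n]$ with modules over an $\Ebb_\infty$-algebra in a controllable $(n+1)$-category, or (ii) directly establishing dualizability of all presentable $n$-categories, both of which lie beyond the tools currently available in the literature.
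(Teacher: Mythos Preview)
Your proposal is not a proof, and you know it is not: you explicitly identify the compactness/dualizability step as the obstruction and conclude that it ``lies beyond the tools currently available in the literature.'' This is exactly right, and it matches the paper's treatment. The paper does \emph{not} prove this statement; it is stated as a conjecture and is used only as a hypothesis for the conditional results \cref{cor:en} and \cref{highertele}. The paper's commentary after the conjecture says precisely what you say: the $n=1$ case follows from the closed symmetric monoidal structure on $\LinAPrLU$, while for $n\geqslant2$ one only knows that the ambient category $\mathrm{Lin}_{\scrA}\CatVinftycocom[n]$ is closed, so the internal mapping object exists but is only known to be cocompletely $\LinAPrLU[n-1]$-tensored, not $\kappa_0$-compact.

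Your outline of possible strategies (reduction to free objects via generation under $\kappa_0$-small colimits, or dualizability of all presentable $n$-categories) is a sensible research programme, and goes somewhat beyond what the paper says, but none of it constitutes a proof. In particular, the step where you would conclude $\kappa_0$-compactness of $n\Funtwo_n(k\bm{\scrE},n\bm{\scrD})$ from \cite[Theorem~5.5.14]{stefanich2020presentable} is not justified: that theorem handles limits of left-adjointable diagrams, whereas the diagram indexing a functor category out of a small $k\bm{\scrE}$ need not be left-adjointable. So even your sketched inductive route has a genuine gap at exactly the point you flag.
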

Note that when $n=1$ \cref{conj:notenriched} holds, as the symmetric monoidal structure on  
$\LinAPrLU$ is closed and therefore $\LinAPrLU$ is enriched over itself. It is natural to expect that this holds for all $n$, however this has not been established yet. As we discussed, the category $\CatVinftycocom$ is closed symmetric monoidal and thus $\Mod_{n\PrLUtwotiny[n-1]}{\lp\CatVinftycocom\rp}$ 
 admits morphism objects for every positive integer $n$. However, in general, these are  only categories tensored over $(n+1)\PrLUtwo[n]$. \cref{conj:notenriched} claims that the morphism object between two presentable $n$-categories is not only $(n+1)\PrLUtwo[n]$-tensored but is in fact  presentable. 

Let $\scrA$ be a presentably symmetric monoidal category, and let $n$ be a positive integer. Following  \cite{gepnerhaugseng}  
we can define  inductively the category of $\scrA$-enriched $n$-categories as
\[
\mathrm{Lin}_{\scrA}\CatVinfty[n]\coloneqq\mathrm{Lin}_{\mathrm{Lin}_{\scrA}\CatVinftytiny[n-1]}\CatVinfty.
\]
In virtue of \cite[Theorem $6.3.2$ and Corollary $6.3.11$]{gepnerhaugseng}, we have a lax monoidal functor
\[
\Omega^n\colon\lp\mathrm{Lin}_{\scrA}\CatVinfty[n]\rp_*\longrightarrow\Alg(\scrA)
\]
from the category of pointed $\scrA$-enriched $n$-categories to the category of algebras in $\scrA$, given by taking the algebra of endomorphisms of the object determined by the pointing.

\begin{defn}
\label{def:Enhochschild}
Let $\scrA$ be a presentably symmetric monoidal category and  let $n\geqslant1$ be an integer. Let $n\bm{\scrC}$ be an $n\mathbf{Lin}_{\scrA}\CatVinfty[n-1]$-enriched category (in particular, it is a $n$-category). Let
\[
n\smash{\underline{\mathbf{End}}}_{n\mathbf{Lin}_{\scrA}\CatVinftytiny[n-1]}{\lp n\bm{\scrC},\hsp n\bm{\scrC}\rp}
\]
be the $n\mathbf{Lin}_{\scrA}\CatVinfty[n-1]$-enriched category of $n\mathbf{Lin}_{\scrA}\CatVinfty[n-1]$-linear endofunctors of $n\bm{\scrC}$, seen as naturally pointed at the identity.
The \textit{$\Ebb_n$-Hochschild cohomology of $n\bm{\scrC}$} is the algebra object in $\scrA$ defined as 
$$\mathrm{HH}^{\bullet}_{\Ebb_n}(n\bm{\scrC})\coloneqq \Omega^n\lp n\smash{\underline{\mathbf{End}}}_{n\mathbf{Lin}_{\scrA}\CatVinftytiny[n-1]}{\lp n\bm{\scrC},\hsp n\bm{\scrC}\rp}\rp.$$
\end{defn}
\begin{remark}
Note that, in the case $n=1$, \cref{def:Enhochschild} agrees with \cref{def:hochschild}. Indeed, the $\Ebb_1$-Hochschild cohomology of a presentably $\scrA$-linear category $\scrC$ is just the object of $\scrA$ classifying endomorphisms of the identity functor of $\scrC$ -- i.e., it is just the ordinary Hochschild cohomology of $\scrC$. 
\end{remark}

Following  \cite[Remark $6.3.16$]{gepnerhaugseng}, it is possible to describe $\mathrm{HH}^{\bullet}_{\Ebb_n}(n\bm{\scrC})$  in fairly  concrete terms.  
The $n$-category of $n$-endofunctors of $n\bm{\scrC}$ is an $\Ebb_1$-algebra, so it is naturally pointed at the unit (i.e., the identity endofunctor $\mathrm{id}_{n\bm{\scrC}}$ of $n\bm{\scrC}$).  There is an $\scrA$-linear $(n-1)$-category of natural transformations between $\mathrm{id}_{n\bm{\scrC}}$ and itself, which is again naturally pointed at the identity. Again, the higher natural transformations between such natural transformations form an $\scrA$-linear $(n-2)$-category, which is itself pointed. Iterating this procedure, after $n$ steps we  obtain an algebra object of $\scrA$ which parametrizes the  (possibly non-invertible) natural transformations of $n$-simplices between $\mathrm{id}_{n\bm{\scrC}}$ and itself.

\begin{remark}
\label{remark:integraltransforms}
If $A$ is an $\Ebb_{n}$-algebra object inside a presentably symmetric monoidal category $\scrA$, the $\Ebb_n$-Hochschild cohomology of $A$ is defined in \cite[Definition $5.8$]{integraltransforms} as the mapping object$$\HH^{\bullet}_{\Ebb_n}(A)\coloneqq\Mapin_{\Mod^{\Ebb_n}_A(\scrA)}(A,A).$$We can also consider the $\scrA$-linear $n$-category $\mathbf{B}^nA$ defined in \cite[Corollary $6.3.11$]{gepnerhaugseng}, which is the $\scrA$-enriched $n$-category having one single object, one single morphism for all $1\leqslant k < n$, and an $\Ebb_n$-algebra of $n$-morphisms equivalent to $A$ itself. It is straightforward to see that
\[
\mathrm{HH}^{\bullet}_{\Ebb_n}(A)\simeq \mathrm{HH}^{\bullet}_{\Ebb_n}(\mathbf{B}^nA).
\]
\end{remark}
We are now ready to state our $n$-categorical generalization of   \cref{lemma:hochschildand2modules} and \cref{thm:teleman}.
\begin{proposition}
	\label{cor:en}
	Let $\scrA$ be a presentably symmetric monoidal category with monoidal unit $\boldone_{\scrA}$, and let $n\geqslant 1$ be an integer. Let $n\bm{\scrC}$ be an $\scrA$-linear presentable $n$-category, and let $G$ be an $(n-1)$-connected topological group. Let $G\text{-}\operatorname{ModStr}(n\bm{\scrC})$ denote the space of all possible left $G$-module structures on $n\bm{\scrC}$. If \cref{conj:notenriched} holds, there is an equivalence of spaces
	$$
	\Map_{\Alg_{\Ebb_n}(\scrA)}{\lp \Omega^n_*G\otimes\boldone_{\scrA} ,\hsp \mathrm{HH}^{\bullet}_{\Ebb_n}(\bm{\scrC}) \rp}
	 \simeq  
	  G\text{-}\operatorname{ModStr}(n\bm{\scrC}).
	$$
\end{proposition}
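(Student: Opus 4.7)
The plan is to promote the proof of \cref{lemma:hochschildand2modules} one categorical level higher: we replace the functor $\LMod_{(-)}$ by the iterated module $(n+1)$-category functor $n\LModtwo^{n-1}_{(-)}(\scrA)$ of \cref{constr:highermodules}, and ordinary Hochschild cohomology by $\Ebb_n$-Hochschild cohomology (\cref{def:Enhochschild}). The argument decomposes into two steps: (i) identify $G$-module structures on $n\bm{\scrC}$ with $n\LModtwo^{n-1}_{\Omega^n_*G\otimes\boldone_{\scrA}}(\scrA)$-module structures on $n\bm{\scrC}$ via higher monodromy; and (ii) identify such module structures with $\Ebb_n$-algebra maps into $\HH^{\bullet}_{\Ebb_n}(n\bm{\scrC})$ via a higher delooping of the adjunction between $\LMod_{(-)}$ and $\Phi$ used in \cref{lemma:hochschildand2modules}.

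For step (i), set $X\coloneqq\mathbf{B}G$. Since $G$ is $(n-1)$-connected, $\mathbf{B}G$ is pointed $n$-connected and $\Omega^{n+1}_*\mathbf{B}G\simeq\Omega^n_*G$. Applying \cref{conj:infinityn} and using \cref{lemma:rectificationofmodules} to pass from $\Omega^n_*G$ to $\Omega^n_*G\otimes\boldone_{\scrA}$, we obtain an equivalence of $(n+1)$-categories
\[
(n+1)\LocSysCattwo^n(\mathbf{B}G;\scrA)\simeq (n+1)\mathbf{LMod}_{n\LModtwo^{n-1}_{\Omega^n_*G\otimes\boldone_{\scrA}}(\scrA)}\lp(n+1)\LinAPrLUtwo[n]\rp.
\]
Exactly as in the proof of \cref{thm:infinitytwothm}, this equivalence commutes with the forgetful functors to $(n+1)\LinAPrLUtwo[n]$ (it takes a trivial $G$-action to a free module structure and vice versa), so passing to homotopy fibers over $n\bm{\scrC}$ yields the desired equivalence of spaces $G\text{-}\operatorname{ModStr}(n\bm{\scrC})\simeq n\LModtwo^{n-1}_{\Omega^n_*G\otimes\boldone_{\scrA}}(\scrA)\text{-}\operatorname{ModStr}(n\bm{\scrC})$.

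For step (ii), the strongly monoidal functor $n\LModtwo^{n-1}_{(-)}(\scrA)$ of \cref{constr:highermodules}, combined with Dunn's additivity and iterated applications of \cite[Theorem $4.8.5.5$]{ha}, refines to a fully faithful functor $\Alg_{\Ebb_n}(\scrA)\to\Alg\lp(n+1)\LinAPrLUtwo[n]\rp$. Granting \cref{conj:notenriched}, the symmetric monoidal structure on $(n+1)\LinAPrLUtwo[n]$ is closed, so this functor admits a right adjoint $\Phi_n$ sending a presentably monoidal $\scrA$-linear $n$-category to its $\Ebb_n$-algebra of endomorphisms of the unit; by \cref{def:Enhochschild} and \cref{remark:integraltransforms} this coincides with $\Ebb_n$-Hochschild cohomology. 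The higher analogue of \cite[Corollary $4.7.1.41$]{ha}, again available once $(n+1)\LinAPrLUtwo[n]$ is closed, identifies $n\LModtwo^{n-1}_{\Omega^n_*G\otimes\boldone_{\scrA}}(\scrA)$-module structures on $n\bm{\scrC}$ with monoidal $n$-functors from $n\LModtwo^{n-1}_{\Omega^n_*G\otimes\boldone_{\scrA}}(\scrA)$ into the endomorphism object of $n\bm{\scrC}$, which by the above adjunction correspond to maps of $\Ebb_n$-algebras $\Omega^n_*G\otimes\boldone_{\scrA}\to\HH^{\bullet}_{\Ebb_n}(n\bm{\scrC})$. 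Concatenating with step (i) closes the chain.

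The main obstacle is concentrated entirely in step (ii): both the existence of the right adjoint $\Phi_n$ with codomain $\Alg_{\Ebb_n}(\scrA)$, and the applicability of the $n$-categorical analogue of \cite[Corollary $4.7.1.41$]{ha}, depend on the closedness of the symmetric monoidal structure on $(n+1)\LinAPrLUtwo[n]$, which is precisely the content of \cref{conj:notenriched}. Once this is granted, the argument is a formal categorical delooping of the $n=1$ case already established in \cref{lemma:hochschildand2modules}.
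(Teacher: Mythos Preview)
Your proposal is correct and follows essentially the same strategy as the paper: both reduce to the $n=1$ argument by (i) invoking \cref{conj:infinityn} at $X=\mathbf{B}G$ to translate $G$-actions into actions of the iterated module $n$-category, and (ii) using an adjunction---whose existence hinges on \cref{conj:notenriched}---between iterated modules and endomorphisms of the unit to deloop down to an $\Ebb_n$-algebra map. The only cosmetic difference is that the paper carries out step (ii) iteratively, constructing right adjoints $\Phi_n,\Phi_{n-1},\ldots,\Phi_1$ one level at a time and composing them, whereas you package this as a single composite adjunction; the content is identical. One small slip: in step (i) you write ``trivial $G$-action to a free module structure,'' but the equivalence of \cref{conj:infinityn} intertwines trivial with trivial (this is what makes it compatible with the coaugmentations, as in the proof of \cref{thm:infinitytwothm}).
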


\begin{remark}
 For $n=1$, this is just \cref{thm:teleman}. Noting that $(-1)$-connected spaces are just non-empty spaces, using the notational trick of interpreting objects of $\scrA$ as ``$\scrA$-linear $0$-categories" we can make the statement of \cref{cor:en} meaningful also for $n=0$: indeed, a $G$-action on an object $M$ of a presentably symmetric monoidal category $\scrA$ is just a $\boldone_{\scrA}[G]\coloneqq (G\otimes\boldone_{\scrA}$)-module structure on $M$, which is the same as an $\Ebb_1$-algebra morphism$$\boldone_{\scrA}[G]\longrightarrow\Endin_{\scrA}(M).$$  
\end{remark}
\begin{proof}[Proof of \cref{cor:en}]
The proof of \cref{thm:teleman} works verbatim in the categorified  setting once we know that, as in the $1$-categorical case, the $\Ebb_n$-Hochschild cohomology in the presentable setting can be expressed as a right adjoint operation to taking the $n$-category of left modules. More precisely, let$$n\LModtwo_{(-)}\colon\Alg{\lp\LinAPrLU[n-1]\rp}\longrightarrow {\lp\LMod_{n\LinAPrLUtwotiny[n-1]}{\lp\CatVinftycocom\rp}\rp}_{n\LinAPrLUtwotiny[n-1]/}$$be the functor taking a presentably monoidal $\scrA$-linear $(n-1)$-category $(n-1)\scrM$ and sending it to the $n$-category of left $(n-1)\scrM$-modules $n\LModtwo_{(n-1)\scrM}{\lp n\LinAPrLUtwo[n-1]\rp}$ pointed at the unit $(n-1)\scrM$. Taking the $\kappa_0$-compact objects, this functor lands in $$\lp\LinAPrLU[n]\rp_{n\LinAPrLUtwotiny[n-1]/}\subseteq{\lp\LMod_{n\LinAPrLUtwotiny[n-1]}{\lp\CatVinftycocom\rp}\rp}_{n\LinAPrLUtwotiny[n-1]/}.$$
 Next, we need to show that the functor 
\[
\Alg\lp\LinAPrLU[n-1]\rp\longrightarrow\lp\LinAPrLU[n]\rp_{n\LinAPrLUtwotiny[n-1]/}
\]
admits a right adjoint $\Phi_n$, explicitly described by taking the endomorphisms of the object determined by the pointing from $n\LinAPrLUtwo[n-1]$. For this, we can apply the  same strategy used in the proof of \cite[Theorem $4.8.5.11$]{ha}, since the only caveat for the existence of such a right adjoint is the existence of internal morphism objects in $\LinAPrLU[n]$. This is precisely the content of \cref{conj:notenriched}.

Using \cref{conj:infinityn}, we can express the action of $G$ (which, being $(n-1)$-connected, is the based loop space of its $n$-connected classifying space $\mathbf{B}G$) on a presentably $\scrA$-enriched $n$-category $n\bm{\scrC}$ as an action of the $n$-category of iterated modules $n\LModtwo^{n-1}_{\Omega^n_*G}(\scrA)$ on $n\bm{\scrC}$. This is the same as a $\Ebb_1$-monoidal functor
\[
n\LModtwo^{n-1}_{\Omega^n_*G}(\scrA)\longrightarrow n\smash{\underline{\mathbf{End}}}_{(n+1)\LinAPrLUtwotiny[n]}{\lp n\bm{\scrC},\hsp n\bm{\scrC}\rp},
\]
which by adjunction is the same as a $\Ebb_2$-monoidal functor
\[
(n-1)\LModtwo^{n-1}_{\Omega^n_*G}(\scrA)\longrightarrow\Phi_n\lp n\smash{\underline{\mathbf{End}}}_{(n+1)\LinAPrLUtwotiny[n]}{\lp n\bm{\scrC},\hsp n\bm{\scrC}\rp}\rp.
\]
Iterating this construction, we obtain an $\Ebb_{n+1}$-algebra morphism
$$\Omega^n_*G\otimes\boldone_{\scrA}\longrightarrow\Phi_1\Phi_2\cdots\Phi_n\lp n\smash{\underline{\mathbf{End}}}_{(n+1)\LinAPrLUtwotiny[n]}{\lp n\bm{\scrC},\hsp n\bm{\scrC}\rp}\rp.$$In virtue of our description of the right adjoints $\Phi_n$, this is immediately seen to  match our \cref{def:Enhochschild} of the $\Ebb_n$-Hochschild cohomology of $n\bm{\scrC}$.
\end{proof}
\begin{remark}
Another way to interpret \cref{cor:en} is the following. In virtue of \cite[Corollary $6.3.11$]{gepnerhaugseng}, if $\scrA$ is a presentably symmetric monoidal category then the delooping functor
\[
\Omega^n\colon\lp\mathrm{Lin}_{\scrA}\CatVinfty\rp_*\longrightarrow\Alg(\scrA)
\]
is the lax monoidal right adjoint to the looping functor
\[
\mathbf{B}^n\colon\Alg(\scrA)\longrightarrow\lp\mathrm{Lin}_{\scrA}\CatVinfty\rp_*
\]
described in \cref{remark:integraltransforms}. In particular, if $A$ is an $\Ebb_{n+1}$-algebra in $\scrA$, it is immediate to conclude that a monoidal $n$-functor of $n$-categories
\[
\mathbf{B}^nA\longrightarrow n\smash{\underline{\mathbf{End}}}_{(n+1)\LinAPrLUtwotiny[n]}{\lp n\bm{\scrC},\hsp n\bm{\scrC}\rp}
\]
which equips an $\scrA$-enriched $n$-category $n\bm{\scrC}$ of a $\mathbf{B}^nA$-module structure is the same as an $\Ebb_{n+1}$-algebra morphism 
\[
A\longrightarrow\mathrm{HH}^{\bullet}_{\mathbb{E}_n}(n\bm{\scrC}).
\]
Thus, \cref{cor:en} suggests an $n$-categorical Morita equivalence: in $(n+1)\LinAPrLUtwo[n]$, modules for the $\scrA$-enriched $n$-category $\mathbf{B}^nA$ are the same as modules for the presentable $\scrA$-linear $n$-category of $(n-1)$-fold $A$-modules $n\LModtwo^{n-1}_A(\scrA)$ of \cref{def:iteratedmodules}.
\end{remark}
We conclude with an immediate consequence of \cref{cor:en}, which may be relevant in the context of higher Brauer groups and invertible objects in higher categories of $\Bbbk$-linear presentable $n$-categories. 
\begin{corollary}
\label{highertele}
Let $n\geqslant2$ be an integer, let $G$ be an $n$-connected topological group, and let $\Bbbk$ be a field. 
Assume that \cref{conj:notenriched} holds, and that the groupoid of invertible objects inside the symmetric monoidal $(\infinity,n+1)$-category $(n+1)\LinkPrLUtwo[n]$ is connected. Then we have an isomorphism of abstract groups
\[
\pi_0{\lp\lp(n+1)\LocSysCattwo^n(X;\Bbbk)^{\mathrm{inv}}\rp^{\simeq}\rp}\cong \Hom_{\mathrm{Grp}}{\lp\pi_{n+1}(X),\hsp\Bbbk^{\times}\rp}
\]
between the group of equivalence classes of invertible local systems of $\Bbbk$-linear presentable $n$-categories on $X$, and the group of multiplicative characters of $\pi_{n+1}(X)$.
\end{corollary}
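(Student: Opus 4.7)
The plan is to categorify the argument of \cref{prop:invertibleobjectsoflocsyscat}, replacing \cref{cor:maincor} and \cref{lemma:hochschildand2modules} by their higher analogues \cref{conj:infinityn} and \cref{cor:en}. First I would establish the $n$-categorical analogue of \cref{prop:invertibleinlocsyscat}: for a base point $\eta\colon\{*\}\to X$, an object $\scrF$ of $(n+1)\LocSysCattwo^n(X;\Bbbk)$ is invertible if and only if its stalk $\eta^*\scrF$ is invertible in $(n+1)\LinkPrLUtwo[n]$. The proof runs as in the $1$-categorical case once one observes that $(n+1)\LocSysCattwo^n(X;\Bbbk)\simeq\lim_{x\to X}(n+1)\LinkPrLUtwo[n]$ exists as a left adjointable limit (by \cref{lemma:limitsandcolimitsovergroupoids2}), and that the stalk at $\eta$ is conservative: under \cref{conj:infinityn} it corresponds to forgetting the $\Omega_*X$-action, and $X$ is connected since $n\geqslant 2$. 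Combined with the assumption that $\lp(n+1)\LinkPrLUtwo[n]^{\mathrm{inv}}\rp^{\simeq}$ is connected, this means that every invertible local system is equivalent, as an underlying presentable $n$-category, to the monoidal unit $n\LinkPrLUtwo[n-1]$, and the classification reduces to that of $\Omega_*X$-module structures on $n\LinkPrLUtwo[n-1]$ up to equivalence.

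Next, I would apply \cref{cor:en} to $G=\Omega_*X$ (which is $(n-1)$-connected since $X$ is $n$-connected), $\scrA=\Mod_{\Bbbk}$, and $n\bm{\scrC}=n\LinkPrLUtwo[n-1]$, obtaining an equivalence of spaces
\[
\Omega_*X\text{-}\operatorname{ModStr}(n\LinkPrLUtwo[n-1])\simeq\Map_{\Alg_{\Ebb_n}(\Mod_{\Bbbk})}{\lp \mathrm{C}_{\bullet}(\Omega^{n+1}_*X;\Bbbk),\hsp \HH^{\bullet}_{\Ebb_n}(n\LinkPrLUtwo[n-1])\rp}.
\]
The crux is then to compute $\HH^{\bullet}_{\Ebb_n}(n\LinkPrLUtwo[n-1])\simeq\Bbbk$. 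By \cref{def:Enhochschild}, this invariant is obtained by $n$-fold iterated looping, starting from the endo-$n$-category of $n\LinkPrLUtwo[n-1]$ in $(n+1)\LinkPrLUtwo[n]$ pointed at the identity. Since $n\LinkPrLUtwo[n-1]$ is the monoidal unit, its endomorphism $n$-category coincides with $n\LinkPrLUtwo[n-1]$ itself, and looping at the identity recovers the monoidal unit one level lower. Iterating $n$ times terminates at $\HH^{\bullet}(\Mod_{\Bbbk})\simeq\Bbbk$, extending the final computation in the proof of \cref{prop:invertibleobjectsoflocsyscat} to all $n$.

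Finally, the concluding analysis is identical to that of \cref{prop:invertibleobjectsoflocsyscat}: the target is discrete and the source is connective, so $\Ebb_n$-algebra maps factor through $\tau_{\leqslant 0}\mathrm{C}_{\bullet}(\Omega^{n+1}_*X;\Bbbk)\cong\Bbbk[\pi_{n+1}(X)]$; since $n\geqslant 2$, $\Ebb_n$-algebras in the ordinary category of $\Bbbk$-modules coincide with commutative algebras, and the group-ring / group-of-units adjunction yields
\[
\pi_0\Map_{\Alg_{\Ebb_n}(\Mod_{\Bbbk})}{\lp \mathrm{C}_{\bullet}(\Omega^{n+1}_*X;\Bbbk),\hsp \Bbbk\rp}\cong\Hom_{\mathrm{Grp}}\lp \pi_{n+1}(X),\hsp \Bbbk^{\times}\rp,
\]
as desired. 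The main obstacle is the iterative identification of the $\Ebb_n$-Hochschild cohomology of the unit: making precise the claim that each layer of endomorphisms of the identity at the monoidal unit strips off one level of categorification requires care with the enriched structures of \cite{gepnerhaugseng}, and depends essentially on \cref{conj:notenriched} to ensure that the relevant internal morphism $n$-categories exist at each stage.
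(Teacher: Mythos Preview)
Your proposal is correct and follows essentially the same route as the paper's proof: reduce via the $n$-categorical analogue of \cref{prop:invertibleinlocsyscat} to $\Omega_*X$-actions on the unit (using the connectedness assumption on invertibles), then invoke \cref{cor:en} and finish exactly as in \cref{prop:invertibleobjectsoflocsyscat}. The paper's own argument is in fact terser than yours---it simply says the proof is ``completely analogous'' to \cref{prop:invertibleobjectsoflocsyscat} once \cref{cor:en} is available---whereas you spell out the intermediate computation $\HH^{\bullet}_{\Ebb_n}(n\LinkPrLUtwo[n-1])\simeq\Bbbk$ and the truncation step explicitly, which is a welcome clarification rather than a deviation.
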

\begin{proof}
Since $X$ is connected, we can follow the strategy of proof of \cref{prop:invertibleinlocsyscat} to deduce that invertible objects for the monoidal structure on $(n+1)\LocSysCattwo^n(X;\Bbbk)$ are equivalently described as invertible objects in $(n+1)\LinkPrLUtwo[n]$ together with an action of $\Omega_*X$. Since the space of invertible objects in $(n+1)\LinkPrLUtwo[n]$ is assumed to be connected, in order to characterize invertible objects inside $(n+1)\LocSysCattwo^n(X;\Bbbk)$ it is sufficient to characterize all possible actions on an object of $(n+1)\LinkPrLUtwo[n]$. But now the assumption that \cref{conj:notenriched} holds allows us to assume that \cref{cor:en} holds as well. Hence, the proof is now completely analogous to the one of \cref{prop:invertibleobjectsoflocsyscat}.
\end{proof}
\begin{remark}
When $n=1$, the group of connected components of invertible objects in $2\LinkPrLUtwo$ is the Brauer group of $\Bbbk$, and it is known to be trivial whenever $\Bbbk$ is algebraically closed. However, invertible objects in $(n+1)\LinkPrLUtwo[n]$ have not been studied yet. Still, we expect that if $\Bbbk$ is algebraically closed then the groupoid of equivalence classes of invertible objects in $(n+1)\LinkPrLUtwo[n]$ is connected for all $n>1$ as well. 
\end{remark}

 \section{Local systems of categories and symplectic geometry}
 \numberwithin{equation}{section}
\setcounter{subsection}{1}
 \label{symplectic-geometry}
Like Teleman,  our interest in the questions studied in this article stems in large part from the fact that symplectic geometry furnishes examples of local systems of categories, by applying the theory of Fukaya categories to Hamiltonian fibrations. As this is one of the main motivations underlying our work we find it worthwhile to explain this story in some detail. Let $(S,s_{0})$ be a connected based space. A \emph{Hamiltonian fibration} over $S$ is a smooth fibration of manifolds $\pi\colon X \to S$ where each fiber $X_{s} = \pi^{-1}(s)$ is equipped with a symplectic form $\omega_{s}$, and the fibration is equipped with a reduction of the structure group to $\mathrm{Ham}(X_{s_{0}},\omega_{s_{0}})$. By this definition, to a Hamiltonian fibration there is an associated classifying map
\[S \longrightarrow \mathbf{B}\mathrm{Ham}(X_{s_{0}},\omega_{s_{0}}).\]

A Hamiltonian fibration has an underlying symplectic fibration classified by a map $S \to \mathbf{B}\mathrm{Symp}(X_{s_{0}},\omega_{s_{0}}).$ When $S$ is simply-connected, the reduction of the structure group from $\mathrm{Symp}$ to $\mathrm{Ham}$ is equivalent to a choice of closed two-form $\tau \in \Omega^{2}_{\mathrm{cl}}(X)$ such that $\tau|X_{s} = \omega_{s}$ for every $s \in S$ \cite[Theorem 6.36]{mcduff-salamon}. Such a two-form $\tau$ defines a Ehresmann connection on $\pi\colon X \to S$ by taking the $\tau$-orthogonals to the fibers, and the looped classifying map
\[\Omega_*S \longrightarrow \mathrm{Ham}(X_{s_{0}},\omega_{s_{0}})\]
admits an interpretation as the holonomy of such a connection (at least if $\pi$ is proper or if the connection has appropriately tame behavior at infinity).

When $(X_{s_{0}},\omega_{s_{0}})$ is a monotone symplectic manifold, Savelyev \cite{savelyev} has constructed a map of $\infinity$-categories
\[\mathbf{B}\mathrm{Ham}(X_{s_{0}},\omega_{s_{0}}) \longrightarrow\CatVinfty\]
(the source is an $\infinity$-groupoid) whose value at the base point is $\mathrm{Fuk}(X_{s_{0}},\omega_{s_{0}})$.
From another perspective, Oh and Tanaka \cite{oh-tanaka} have constructed a topological action of $\mathrm{Ham}(X_{s},\omega_{s})$ on $\mathrm{Fuk}(X_{s},\omega_{s})$ when $(X_{s},\omega_{s})$ is a Liouville sector. From the perspective of the present paper, these results essentially state that in these situations we obtain a local system of categories.

\begin{proposition}
\label{hamfib}
  Let $\pi\colon X \to S$ be a Hamiltonian fibration, such that the fibers $(X_{s},\omega_{s})$ are either compact monotone or are Liouville sectors. Then there is an associated local system of $(\infinity,1)$-categories over $S$ whose fiber over $s \in S$ is the Fukaya category of $(X_{s},\omega_{s})$.
\end{proposition}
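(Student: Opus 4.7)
The plan is straightforward: since by \cref{notation:locsys} the $(\infinity,1)$-category $\LocSys(S;\CatVinfty)$ is defined as $\Fun(S,\CatVinfty)$, constructing a local system of categories on $S$ reduces to exhibiting a functor $S\to\CatVinfty$ (or, if one prefers presentable targets, $S\to\PrLU$). The Hamiltonian structure on $\pi$ furnishes the classifying map
\[
\psi\colon S\longrightarrow \mathbf{B}\mathrm{Ham}(X_{s_{0}},\omega_{s_{0}}),
\]
so it will suffice to produce a functor $\Phi\colon \mathbf{B}\mathrm{Ham}(X_{s_{0}},\omega_{s_{0}})\to\CatVinfty$ taking the basepoint to $\mathrm{Fuk}(X_{s_{0}},\omega_{s_{0}})$, and then set the required local system to be the composition $\Phi\circ\psi$.

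In the compact monotone case, the existence of $\Phi$ is precisely the content of Savelyev's theorem \cite{savelyev}: this directly constructs a map of $\infinity$-categories $\mathbf{B}\mathrm{Ham}(X_{s_{0}},\omega_{s_{0}})\to\CatVinfty$ with the required value at the basepoint, so no further work is needed. In the Liouville sector case, Oh--Tanaka \cite{oh-tanaka} instead provide a topological action of $G\coloneqq\mathrm{Ham}(X_{s_{0}},\omega_{s_{0}})$ on $\mathrm{Fuk}(X_{s_{0}},\omega_{s_{0}})$. Applying the monodromy equivalence \cref{cor:koszulpresentable} (or equivalently its pointed refinement \cref{prop:koszulisnatural}) to the connected pointed space $\mathbf{B}G$, whose based loop space is $G$, such a $G$-action corresponds exactly to a pointed functor $\mathbf{B}G\to\CatVinfty$ with value $\mathrm{Fuk}(X_{s_{0}},\omega_{s_{0}})$ at the basepoint. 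This delivers the desired $\Phi$ in this case as well.

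In either scenario, the composite $\Phi\circ\psi\colon S\to\CatVinfty$ is the sought-after local system of categories, and its fiber over $s\in S$ is $\mathrm{Fuk}(X_{s},\omega_{s})$ by construction, since the classifying map $\psi$ identifies a path from $s_{0}$ to $s$ with the symplectic parallel transport along that path, which in turn is sent by $\Phi$ to the identification of $\mathrm{Fuk}(X_{s_{0}},\omega_{s_{0}})$ with $\mathrm{Fuk}(X_{s},\omega_{s})$. The only genuinely symplectic-geometric input is the cited work of Savelyev and Oh--Tanaka; the remainder is a formal manipulation of the monodromy dictionary developed in \cref{sec:categorifyingmodules}. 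Consequently, we do not anticipate any significant obstacle: once the $G$-actions have been arranged, the assembly into a local system over $S$ is automatic, precisely because the monodromy equivalence of \cref{monodromyeq} holds in the full generality of cocomplete (and in particular presentable) coefficient categories.
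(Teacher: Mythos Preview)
Your proposal is correct and follows essentially the same approach as the paper: in both cases you compose the classifying map $S\to\mathbf{B}\mathrm{Ham}(X_{s_0},\omega_{s_0})$ with a functor $\Phi$ to $\CatVinfty$, obtaining $\Phi$ directly from Savelyev in the monotone case and from Oh--Tanaka via \cref{cor:koszulpresentable} in the Liouville case. The only difference is that you add a few words justifying the fiber identification and mention \cref{monodromyeq} and \cref{prop:koszulisnatural} as alternative references, which is harmless.
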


\begin{proof}
  In the terminology of the present paper, a local system of $(\infinity,1)$-categories over $S$ is determined by a functor $S \to \CatVinfty$, where the source is regarded as an $\infinity$-groupoid.

  In the case where the fibers are compact monotone, Savelyev's construction \cite{savelyev} produces a map \[\mathbf{B}\mathrm{Ham}(X_{s_{0}},\omega_{s_{0}}) \longrightarrow\CatVinfty.\] Composing Savelyev's map with the classifying map $S \to \mathbf{B}\mathrm{Ham}(X_{s_{0}},\omega_{s_{0}})$ yields the desired functor.

  In the case where the fibers are Liouville sectors, the construction of Oh-Tanaka \cite{oh-tanaka} produces a topological action of $\mathrm{Ham}(X_{s},\omega_{s})$ on $\mathrm{Fuk}(X_{s},\omega_{s})$. By \cref{cor:koszulpresentable}, this is equivalent to giving a local system of categories over $\mathbf{B}\mathrm{Ham}(X_{s_{0}},\omega_{s_{0}})$, which after pulling back under the classifying map $S \to \mathbf{B}\mathrm{Ham}(X_{s_{0}},\omega_{s_{0}})$ yields the desired functor.
\end{proof}

A related case is where $(X,\omega)$ carries a Hamiltonian action of a Lie group $G$. The Proposition then applies with $S = \mathbf{B}G$. By \cref{thm:teleman}, we obtain a map $$\mathrm{C}_{\bullet}(\Omega_{*}G;k) \longrightarrow \mathrm{HH}^{\bullet}(\mathrm{Fuk}(X_{s_{0}},\omega_{s_{0}})) \cong \mathrm{QH}(X_{s_{0}},\omega_{s_{0}}).$$ This result also applies when $S = \mathbf{B}\mathrm{Ham}(X_{s_{0}},\omega_{s_{0}})$. By passing to degree zero homology on the source, we obtain a map
\begin{equation*}
  \pi_{1}(\mathrm{Ham}(X_{s_{0}},\omega_{s_{0}})) \to \mathrm{QH}(X_{s_{0}},\omega_{s_{0}}),
\end{equation*}
thus recovering the celebrated Seidel homomorphism.

\printbibliography
\end{document}